\begin{document}

\title{Decay rate estimations for linear quadratic optimal regulators}

\author{Daniel Est\'evez}
\address{Departamento de Matem\'{a}ticas,
Universidad Autonoma de Madrid, Cantoblanco 28049 (Madrid) Spain}
\email{daniel@destevez.net}

\author{Dmitry V. Yakubovich}
\address{Departamento de Matem\'{a}ticas,
Universidad Autonoma de Madrid, Cantoblanco 28049 (Madrid) Spain
\enspace and
\newline
\phantom{r} Instituto de Ciencias
Matem\'{a}ticas (CSIC - UAM - UC3M - UCM)}
\email{dmitry.yakubovich@uam.es}

\date\today


\subjclass[2000]{Primary 93D05; Secondary 15A24}

\keywords{linear quadratic regulator; eigenvalue bounds; continuous algebraic
Riccati equation; exponential decay}

\maketitle

\begin{abstract}
Let $u(t)=-Fx(t)$ be the optimal control of the open-loop system
$x'(t)=Ax(t)+Bu(t)$ in a linear quadratic optimization problem.
By using different complex variable arguments, we give several lower and upper
estimates of the exponential decay rate
of the closed-loop system
$x'(t)=(A-BF)x(t)$.
Main attention is given to the case of a skew-Hermitian matrix $A$.
Given an operator $A$, for a class of cases, we find a matrix $B$ that provides an almost optimal decay rate.

We show how our results can be applied to the
problem of optimizing the decay rate for a large
finite collection of control systems $(A, B_j)$, $j=1, \dots, N$,
and illustrate this on an example of a concrete mechanical system.
At the end of the article, we pose several questions
concerning the decay rates in the context
of linear quadratic optimization and in a more general context
of the pole placement problem.
\end{abstract}

\

%
%
%
%
%
%
%
%
%
%

\

\noindent {\sc Highlights:}

\

\begin{itemize}

\item We give several lower and upper estimates of the decay rate for the
closed-loop system, arising from the linear quadratic optimal regulator problem
for a system
$(A,B)$, where $A$ is skew-Hermitian.

\item   For a class of cases, we find the control matrix $B$ that provides an
almost optimal decay rate.

\item Numerical examples of tightness of our estimates are given.

\end{itemize}

\renewcommand{\Re}{\operatorname{Re}}
\renewcommand{\Im}{\operatorname{Im}}

\renewcommand{\kappa}{\varkappa}
\renewcommand{\phi}{\varphi}

\theoremstyle{plain}
\newtheorem{theor}{Theorem}
\newtheorem{lemma}[theor]{Lemma}
\newtheorem{corol}[theor]{Corollary}

\newtheorem{example}[theor]{Example}

\theoremstyle{remark}
\newtheorem*{remark}{Remark}
\newtheorem*{remarks}{Remarks}

\theoremstyle{definition}
\newtheorem*{definition}{Definition}

\newcommand\defin {\overset {\text {\rm def} }{=}}
\newcommand\al{\alpha}
\newcommand\be{\beta}

\newcommand\dee{\delta}

\newcommand\De{\Delta}
\newcommand\de{\delta}
\newcommand\ga{\gamma}
\newcommand\Ga{\Gamma}
\newcommand\la{\lambda}
\newcommand\La{\Lambda}
\newcommand\om{\omega}
\newcommand\si{\sigma}
\newcommand\tht{\theta}
\newcommand\Ran{\operatorname{Ran}}
\newcommand\BC{\mathbb{C}}
\newcommand\BR{\mathbb{R}}
\newcommand\cl{\text{close}}
\newcommand\far{\text{far}}
\newcommand\ellest{\ell_{est}}
\newcommand\duc{d_{uc}}
\newcommand\lin{\operatorname{Lin}}

\newcommand\gafrob{\ga_{\mathrm{Frob}}}
\newcommand\gadecay{\gamma_{\mathrm{decay}}}
\newcommand\siclloop{\si_\text{cl.loop}}
\newcommand\almin{ \alpha_{\mathrm{min}} }
\newcommand\almax{ \alpha_{\mathrm{max}} }
\newcommand\SSS{S}


\section{Introduction}

It is well-known that in many practical problems, an engineer has to optimize,
in one or another
sense, several performance parameters of a control system. The Linear Quadratic Optimal Regulator
(LQR) problem searches a
stabilizing feedback which optimizes some associated quadratic cost
functional. Another important characteristic
of stabilization is the exponential decay rate of the resulting closed-loop
system. The main question
we address in this article is to study in which situations the LQR provides
good decay rates of the closed-loop system.

Recall that the standard Linear Quadratic Optimal Regulator problem
concerns the dynamic system of the form
\begin{equation}
\label{eq:system}
 x'(t) = Ax(t) + Bu(t), \quad x(0) = x_0.
\end{equation}
The problem is to  minimize the cost functional
\begin{equation}
\label{eq:j-diag}
 J^u(x_0) = \int_0^\infty
x(t)^*Qx(t)+
u(t)^*Ru(t)
 \,dt.
\end{equation}
Here $x(t)\in \BC^n$ is the state of the system and $u\in L^2_{\text{loc}}\big([0,+\infty),\BC^m\big)$ is a control function.
Matrices $A$, $B$, $R$, $Q$ are complex and have suitable sizes.
We assume that $R$ and $Q$ are positive definite.
We are specially interested in the case when the dimension $m$ of the control $u(t)$ is less than
$n$, the dimension of the state $x(t)$.

As is well-known (see \cite{Lanc-Rodm}, \cite{Willems}), the solution to the LQR
problem
is unique and the function $u(t)$, for which
the minimum of the cost functional is
attained is given by the feedback function $u(t) = -Fx(t) = -Fe^{(A-BF)t}x_0$,
where $F = R^{-1}B^*X$ is the feedback matrix and $X$ is any
nonnegative
solution of the continuous Algebraic Riccati Equation
\begin{equation}
\label{eq:original-care}
 XBR^{-1}B^*X - XA - A^*X - Q  = 0.
\end{equation}
This solution $X$ is unique and positive definite, and the minimum
cost functional is given by $\hat{J}(x_0) = x_0^* X x_0$. It is also notable
that the feedback matrix $F$ does not depend on $x_0$. The closed-loop system is
\begin{equation*}
\label{eq:orig-closedloop}
 x'(t) = A_{\text{cl.loop}}\,x(t),
\end{equation*}
where $A_{\text{cl.loop}} \defin A - BF$  is stable, that is, its
spectrum $\si_{\text{cl.loop}}$ lies in the open left half-plane
$\BC_-$. We denote by $\|\cdot\|$ the euclidean norm of vectors in
$\BC^k$ and the induced norm of matrices.

The linear quadratic problem is one of the most widespread methods for
stabilizing systems.
In this work, we give various estimates of the quality of this stabilization in
terms of the geometry of the spectrum of the open-loop system matrix $A$
and the characteristics of $B$.
We remark that the pole placement problem is known to be very ill conditioned
for control systems of large size and that the linear quadratic stabilization
is one of the methods for overcoming this difficulty. We refer to
\cite[Section 4]{HeLaubMehrm95}, \cite{MehrmXu2}, \cite{Chu98} and references therein
for theoretical results and for a discussion of different aspects of
the pole placement approach and its comparison with the linear quadratic approach
to stabilization.

The exponential decay rate of the closed-loop system is given by
\begin{equation}
\label{eq:gamma1}
 \gamma_{\mathrm{decay}}(A, B) = \min \big\{ |\Re \nu|: \enspace \nu\in
\siclloop\big\}.
\end{equation}
It is well-known that
\[
 \gadecay
 = \sup \,\{ \varepsilon>0 : \quad \forall x_0 \; \exists K=K(\varepsilon,x_0) :
 \enspace \|x(t)\| \leq Ke^{-\varepsilon t}, \forall t \geq 0\}.
\]
Hence $\gadecay$ can be seen as a characteristic of the
quality of the LQ control for large times $t$.
The LQ regulator can be considered to be good in this sense if
$\gamma_{\mathrm{decay}}$ is big.

\

The main results of this article concern upper and lower estimates of $\gadecay$.
This is done under the assumption that the matrix $A$ is skew-Hermitian:
$A^*=-A$ (that is, $iA$ is Hermitian). This assumption just means that under the absence of control
($u(t)\equiv 0$), the energy $\|x(t)\|^2$ is conserved.
Notice that if an open-loop linear system models a mechanical (or electrical)
system where the energy is conserved,
then we are in this situation.

We also will assume that
$$
Q=I, \quad R=I.
$$
The assumption about $Q$ is rather natural in view of the above remark on the
conservation of energy.
The case of $Q=|p(A)|^2$, where $p$ is a polynomial,
reduces easily to our setting. A general matrix weight $R>0$ is converted to
the the weight $R=I$ by making a linear substitution $\tilde u(t)=R^{1/2} u(t)$ in \eqref{eq:system}.

As we show, the upper and lower estimates of $\gadecay$ we give
permit one to compare the performance of the LQ optimal regulators of control
systems $(A, B_j)$, in which $A$ is fixed and there are several possibilities
for the matrix $B$.

We are not aware of any previous work estimating $\gadecay$ for LQ
optimal regulators. Other measures of the quality of control have
been studied already. Among the most
popular of them are the eigenvalues of $X$, $\|X\|$, $\operatorname{trace} X$
and $\det X$. Since $\hat{J}(x_0) = x_0^* X x_0$, these measures are
tightly related to the cost of the stabilized system.

Indeed, $\|X\|$ has the sense of the worst case performance of the cost
functional, for $x_0$ of fixed norm:
\[
\|X\| =\max_{\|x_0\| = 1} \hat{J}(x_0).
\]
Similarly, $n^{-1}\operatorname{trace} X$ is the average value of $\hat{J}(x_0)$
when $x_0$ ranges over the unit sphere. The larger is any of these measures of quality
of the control, the worse is the LQ stabilization.

Estimates for all these measures are well know. See for instance the reviews by Mori
and Derese \cite{MoriDerese}, and Kwon, Moon and Ahn \cite{KwonMoon}, the
papers \cite{Komaroff1989}, \cite{Mori85}, \cite{YasudaHir79} and
recent papers \cite{DaviesShietl07}, \cite{DaviesShietl08}, \cite{Lee03}, \cite{Lee06},
\cite{LiuZhangLiu}.

We observe the following easy relationship:
\begin{equation}
\label{eq:x-geq-gdec}
\gadecay \geq \frac{1}{2\, \|X\| }\, .
\end{equation}
This inequality is true because for any $\nu \in \siclloop$,
if $(A-BF)x_0 = \nu x_0$ and $\|x_0\|=1$, then
\[
\|X\|\ge
\langle Xx_0,x_0 \rangle \geq
 \int_0^\infty \|x(t)\|^2 dt = \frac{1}{2\,|\Re\nu|}\, .
\]
So any upper estimate of $\|X\|$ implies a lower estimate of $\gadecay$.
Several works give upper bounds for $\|X\|$, however, these bounds
are given under assumptions that either
 $A + A^* < 0$ or that $BB^*$ is
invertible. All our results deal with the case when $A+A^* =0$ and $BB^*$
can be singular.

Notice that
\eqref{eq:x-geq-gdec} shows that whenever the stabilization is bad in terms of the parameter $\gadecay$,
$\|X\|$ also is large.

We put
$$
\si(A)=\{i\la_1, \dots, i\la_n\}
$$
(where $\la_j\in \BR$) and assume throughout the whole article that
\begin{equation}
\label{eq:la-order}
\la_1\le \la_2 \le\dots\le  \la_n.
\end{equation}

Our estimates depend on the following numbers. The characteristic
\begin{equation}
 \label{eq:delta}
\delta(A) = \min_{j,k; j \neq k} |\lambda_j - \lambda_k|
\end{equation}
gives the minimal separation of eigenvalues. We
will write just $\delta$ when the dependence on $A$ is clear enough.
For a fixed index $k$, we put
\begin{equation}
\delta_k = \min_{j; j \neq k} |\lambda_j - \lambda_k|,
\end{equation}
which denotes the separation of the eigenvalue $i\la_k$ of $A$ from the rest. The
number
\begin{equation}
\label{eq:Delta}
\Delta=\Delta(A) = \max_{j,k} |\lambda_j - \lambda_k|=\la_n-\la_1
\end{equation}
will also be used.

The skew-Hermitian matrix $A$ can be diagonalized:
\begin{equation}
\label{eq:v-j}
Av_j=i\la_j v_j,
\end{equation}
where $\{v_j\}$ $(1\le j\le n)$ is an orthonormal basis of $\BC^n$. Put
\begin{equation}
\label{eq:def-bj}
b_j=B^* v_j.
\end{equation}

One of our main results can be stated as follows.

\begin{theor}
\label{thm:main}
Put
\begin{equation}
\label{def-ellest}
\ellest=
\min_{1\leq k \leq n}
\frac{\|b_k\|}{\sqrt{2}\,(1 + 2\frac{\|B\|^2}{\delta_k^2})}.
\end{equation}
Then the following statements hold.
\begin{enumerate}
\item The eigenvalues $\nu_j$ of the closed-loop system
lie in  the box $[-\|B\|,-\ellest)\times[\lambda_1,\lambda_n]$.

\item If moreover, $m\le n$ and the smallest singular value $\sigma_m$ of $B$
satisfies $\sigma_m > 2\sqrt{2} \Delta$, then  exactly
$m$ eigenvalues lie in box
$[-\|B\|,-\frac{\sqrt{6}}{4}\sigma_m]\times[\lambda_1,\lambda_n]$,
and the other $n-m$ eigenvalues lie in the box
$(-\sqrt{3}\Delta,-\ellest)\times[\lambda_1,\lambda_n]$.

\item In the case $m=1$, the bound $\ellest$ in the above assertions
can be improved by substituting it by a larger number
\begin{equation}
\label{eq:circles}
\ellest^1=
\min_{1\leq k \leq n}
\frac{\|b_k\|}{\sqrt{2}\cdot\sqrt{1 + 2\frac{\|B\|^2}{\delta_k^2}}}\; .
\end{equation}
\end{enumerate}
\end{theor}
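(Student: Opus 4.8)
The plan is to turn the spectral problem for $A-BF$ into a finite‑rank resolvent equation and then argue with complex‑variable estimates. Since $Q=R=I$, it is classical that $\siclloop$ consists of the eigenvalues of the Hamiltonian matrix $H=\left(\begin{smallmatrix}A&-BB^*\\-I&-A^*\end{smallmatrix}\right)$ that lie in $\BC_-$. Because $A^*=-A$ we have $sI+A^*=sI-A$, so a Schur‑complement computation collapses $\det(sI-H)$ to $\det\big((sI-A)^2-BB^*\big)$; factoring out $\prod_j(s-i\la_j)^2$ and applying Sylvester's determinant identity, one gets that $\nu\in\BC_-$ is a closed‑loop pole if and only if $1\in\sigma\big(T(\nu)\big)$, where $T(s)=B^*(sI-A)^{-2}B=\sum_{j=1}^{n}\frac{b_jb_j^*}{(s-i\la_j)^2}$; equivalently, there is a unit vector $w$ with $(\nu I-A)^2w=BB^*w$. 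This identity underlies all three parts.

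Writing $\nu=x+iy$ and $\nu I-A=xI+iC$ with $C$ Hermitian (eigenvalues $y-\la_j$), pair $(\nu I-A)^2w=BB^*w$ with $w$ and split into real and imaginary parts: this gives $\langle Cw,w\rangle=0$, hence $y=\langle -iAw,w\rangle\in[\la_1,\la_n]$, and $x^2=\|Cw\|^2+\|B^*w\|^2$; taking norms in the same identity gives $\operatorname{dist}(\nu,\sigma(A))\le\|B\|$, whence $\Re\nu\ge-\|B\|$. This already yields the imaginary‑part range in every box of (1)--(2) and the left edge $-\|B\|$, and (together with $|y-\la_j|\le\Delta$) the bound $x^2\le\Delta^2+\|B^*w\|^2$, which will later control the $n-m$ "slow" poles of (2).

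The heart of the matter, and the step I expect to be the main obstacle, is the lower bound $|\Re\nu|\ge\ellest$ in (1). From $x^2=\|Cw\|^2+\|B^*w\|^2$ it suffices to rule out the case $\|Cw\|<\ellest$ and $\|B^*w\|<\ellest$ simultaneously. First one records the elementary inequality $\ellest\le\delta_k/4$ for every $k$ (from $\|b_k\|\le\|B\|$ and $\max_{t\ge0}t/(1+2t^2)=\tfrac1{2\sqrt2}$). Choosing $k$ with $\la_k$ nearest to $y$, the estimate $\|Cw\|^2=\sum_j|c_j|^2(y-\la_j)^2<\ellest^2\le\delta_k^2/16$ forces $w$ to concentrate on $v_k$: namely $|c_k|^2>\tfrac89$ and $\sum_{j\ne k}|c_j|^2<16\ellest^2/(9\delta_k^2)$, since every other eigenvalue is at distance $\ge\delta_k$ from $\la_k$, hence $>\tfrac34\delta_k$ from $y$. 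Then $\|B^*w\|\ge|c_k|\,\|b_k\|-\big(\sum_{j\ne k}|c_j|^2\big)^{1/2}\|B\|$, and inserting the concentration bounds contradicts $\|B^*w\|<\ellest\le\|b_k\|/\big(\sqrt2(1+2\|B\|^2/\delta_k^2)\big)$; after clearing denominators the contradiction becomes $8t^2-4t+1>0$ with $t=\|B\|/\delta_k$, which holds for all $t$ because the discriminant is negative. The delicate point is balancing the two competing effects — spread of $w$ versus size of $B^*w$ — so that these particular constants close the loop.

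For (2): when $\sigma_m>2\sqrt2\Delta$, $B^*B$ is well conditioned on a scale far larger than $\|A\|$, and $T(s)=s^{-2}B^*B+O(\|A\|\,|s|^{-3})$ for $|s|$ of order $\sigma_m$; a Rouché / argument‑principle count of the zeros of $\det(I_m-T(s))$ against those of $\det(I_m-s^{-2}B^*B)$ (which are $s=\pm\sigma_j$) places exactly $m$ closed‑loop poles near $-\sigma_1,\dots,-\sigma_m$ inside $[-\|B\|,-\tfrac{\sqrt6}4\sigma_m]\times[\la_1,\la_n]$, the factor $\tfrac{\sqrt6}4$ absorbing the perturbation, while for the remaining $n-m$ poles the vector $w$ is nearly annihilated by $B^*$ (since $\|BB^*w\|=\|(\nu I-A)^2w\|=O(\Delta^2)$ and $BB^*$ has no nonzero eigenvalue below $\sigma_m^2$), so $\|B^*w\|=O(\Delta)$ and $x^2\le\Delta^2+\|B^*w\|^2$ gives the outer bound $\sqrt3\Delta$; the inner bound $\ellest$ is inherited from (1). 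For (3): when $m=1$, $T(s)$ is scalar and $\langle\xi,b_k\rangle$ automatically has modulus $\|b_k\|$, so the Cauchy--Schwarz loss in the argument for (1) disappears. Writing the pole equation as $(\nu-i\la_k)^2=|b_k|^2/(1-R_k(\nu))$ with $R_k(s)=\sum_{j\ne k}|b_j|^2/(s-i\la_j)^2$, and separating real and imaginary parts, one obtains a quadratic for $x^2$ whose relevant root is $x^2=\tfrac12P\big(a+\sqrt{a^2+\beta^2}\big)$ with $P=|b_k|^2/|1-R_k(\nu)|^2$, $a=1-\Re R_k(\nu)$, $\beta=-\Im R_k(\nu)$; since $\Re R_k(\nu)<0$ one has $a\ge1$, so $x^2\ge Pa$, and combining this with $|R_k(\nu)|\le\|B\|^2/\min_{j\ne k}|\nu-i\la_j|^2$ and $|\nu-i\la_k|^2=x^2+(y-\la_k)^2\le2x^2$ produces the improved constant $\ellest^1$.
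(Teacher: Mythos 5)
Your derivation of the spectral equation is sound: via the Hamiltonian and a Schur complement (using $A^*=-A$) you arrive at $\det\big((\nu I-A)^2-BB^*\big)=0$, which is the same characteristic function $\det\Phi(\nu)=0$ that the paper obtains from the spectral factorization $\Phi=M(-\bar z)^*M(z)$. From there, however, your route through part (1) is genuinely different and rather elegant. You work with the eigenvector relation $(\nu I-A)^2w=BB^*w$, $\|w\|=1$, write $\nu I-A=xI+iC$, and extract the identity $x^2=\|Cw\|^2+\|B^*w\|^2$ together with $\langle Cw,w\rangle=0$ (hence $y\in[\lambda_1,\lambda_n]$), then close the argument with a concentration estimate on $w$. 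The paper instead studies $\Phi(z)=I-B^*(zI-A)^{-2}B$ via the scalar kernels $h,g$ (Lemmas \ref{zero-location}--\ref{triangles}) and the resolvent trick in Lemma \ref{circles-m}. I have checked your constants: $\ellest\le\delta_k/4$ holds, $|c_k|^2>8/9$ and $\sum_{j\ne k}|c_j|^2<16\ellest^2/(9\delta_k^2)$ follow, and the final inequality reduces to $8t^2-4t+1>0$ with $t=\|B\|/\delta_k$, which is always strict; so part (1) is correct. Your treatment of part (3) also works: from $x^2\ge\|Cw\|^2\ge(y-\lambda_k)^2$ you get $|\nu-i\lambda_k|^2\le 2x^2$, and then $|\nu-i\lambda_j|^2=x^2+(y-\lambda_j)^2\ge 2x^2-2\delta_k|x|+\delta_k^2\ge\delta_k^2/2$ for $j\ne k$, which reproduces the paper's $|z-i\lambda_j|>\delta_k/\sqrt2$ without the disk-intersection geometry. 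What your method buys is that the box-and-triangle/disk machinery of Lemmas \ref{zero-location}--\ref{circles} collapses into one identity and a perturbative estimate; what you lose is the finer disk exclusion $|z-i\lambda_k|>\rho_k$ of Theorem \ref{thm:main-detailed}, of which Theorem \ref{thm:main} is only a corollary.

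Part (2), however, has a real gap. Your Rouch\'e comparison of $\det(I_m-T(s))$ against $\det(I_m-s^{-2}B^*B)$ is in the same spirit as the paper's winding-number count (Lemma \ref{gaps}) and can plausibly place $m$ poles in $[-\|B\|,-\tfrac{\sqrt6}{4}\sigma_m]$. But for the outer bound $|\Re\nu|<\sqrt3\Delta$ on the remaining $n-m$ poles your argument is circular: you invoke $\|(\nu I-A)^2w\|=O(\Delta^2)$, yet $\|(\nu I-A)^2w\|\ge\min_j|\nu-i\lambda_j|^2\ge x^2$, so the estimate you need already presupposes $|x|=O(\Delta)$. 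Trying to close it directly also fails: with $\|B^*w\|\le\|BB^*w\|/\sigma_m\le(|x|+\Delta)^2/\sigma_m$ one gets $x^2\le\Delta^2+(|x|+\Delta)^4/\sigma_m^2$, and at $|x|=\sqrt3\Delta$, $\sigma_m=2\sqrt2\Delta$ this reads $3\le 1+(\sqrt3+1)^4/8\approx 7.5$, which is no contradiction. The paper avoids this by running the argument-principle count over a continuum of boxes $[-c,c]\times[-d,d]$ with $c\in[\sqrt3\Delta,\tfrac{\sqrt6}{4}\sigma_m)$, showing no zero sits on any vertical segment $\Re z=\pm c$ in that range, and then letting $c$ sweep to both endpoints; you would need an analogous two-sided count (or a second Rouch\'e contour at radius $\sqrt3\Delta$) rather than the single comparison near the $\sigma_j$'s.
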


In particular, it follows from this theorem that
\begin{equation}
\label{eq:gadec}
\gadecay > \ellest  \qquad (\gadecay > \ellest^1 \enspace \text{for} \enspace
m=1).
\end{equation}

If $A$ has multiple eigenvalues, we put $\ellest=\ellest^1=0$. It follows from the proof of this theorem
that all its statements remain true in this case.

It also follows from Theorem \ref{thm:main} that for $m=1$,
$\gadecay\le 2\sqrt{2}\, \De$, \textit{independently of the choice of the  $n\times 1$ matrix $B$}.
We will comment more on this phenomenon at the end of Section~\ref{mainresult}
and in Section~\ref{sec:open-que}, Question~1.

Theorem~\ref{thm:main-detailed} below gives a more detailed information about
the location of the spectrum of the closed-loop system.

Notice that the appearance of the norms of vectors
$b_k=B^*v_k$ in this estimate is very natural. In fact, the quantity
$$
d_0(A,B)=\min_k \|b_k\|
$$
can be taken for a kind of  \textit{measure of controllability} of the system $\dot x=Ax+Bu$.
In the case when all eigenvalues of $A$ are distinct, the system
is controllable if and only if $\min_k \|b_k\|>0$.
At the end of the Introduction, we will comment on the relation
between $d_0(A,B)$, the distance to uncontrollability $\duc(A,B)$, introduced
by Eising, and $\gadecay(A,B)$.

If $A$ is not normal, then one should use eigenvectors of $A^*$ instead of
eigenvectors of $A$ in the definition of the measure of controllability $d_0(A,B)$.

We remark that if $m = n$ and for some fixed $\beta > 0$
one can freely choose $B$ with $\|B\|=\beta$, then an optimal control
with the best possible
 $\gadecay$ can be given
easily. If $BB^* = \beta^2I$ (for example take $B = \beta I$), then the
solution to the associated continuous Algebraic Riccati Equation is $X = \beta^{-1}I$.
Hence, the closed-loop system matrix is $A_{\text{cl.loop}} = A -
\beta I$, and one can readily compute its eigenvalues. It follows that
in this case, in the bound $\gadecay \leq \|B\|$, which follows from
Theorem~\ref{thm:main}, the equality is attained.

We also observe that the case $m > n$ can be reduced to $m \le n$.
In fact, the optimal feedback $u(t)=-B^*Xx(t)$ ranges over the space $\Ran B^*$. Therefore the
linear quadratic problem for the pair $(A,B)$ reduces to the
same problem for the pair $\big(A, B |\Ran B^*\big)$; notice that $\dim \Ran B^*\le n$.
After this reduction, in place of $B$, we get the operator
$B |\Ran B^*$, which has trivial kernel.

For this reason, we will assume throughout the paper that
$$
m\le n \quad \text{and} \quad \ker B=0.
$$

Let us briefly overview the contents of the article by sections.
Section~\ref{mainresult} is devoted to the proof of
Theorem~\ref{thm:main-detailed}, which implies Theorem~\ref{thm:main} above.

In Section~\ref{sec:enough-sep}, we show that if the minimal separation
$\delta(A)$, defined in \eqref{eq:delta}, is rather big
in comparison with $\|B\|$, then the closed-loop eigenvalues of the system
can be located with good precision, which gives nice two-sided estimates of $\gadecay$.
In particular, Corollary~\ref{corol-phiks} shows that if $\|B\|/\de(A)$ is
rather small, then $\gadecay$ is comparable with $d_0(A,B)=\min\|b_k\|$.
In many problems of the design of optimal controllers,
the matrix $B$ can be changed, up so some extent.
In this section, for a given $A$, we find a ``suboptimal''
matrix $B$ among all
matrices with a fixed norm, which is supposed to be small.
(See Theorem~\ref{limit-case} and Corollary~\ref{limit-corol}.)

For $1 \leq k \leq n - 1$ we define
\begin{equation}
\label{Delta-m}
\Delta_k\defin\min_{1\le j \le n-k}|\lambda_{j+k}-\lambda_j|.
\end{equation}
Observe that with this notation, $\delta$ and $\Delta$ defined in equations
\eqref{eq:delta} and \eqref{eq:Delta} are $\delta = \Delta_1$ and $\Delta =
\Delta_{n-1}$.

In Section~\ref{sec:delta-m}, Theorem~\ref{decay-gener-m}, we give an estimate
of $\gadecay$ in terms of $\Delta_m$
(recall that $m$ is the dimension of $u(t)$). For $m>1$, this estimate may be much better
than the estimate of Theorem~\ref{thm:main}
if some of eigenvalues of $A$ are close to each other or coincide.

Section~\ref{briefaccount} contains a brief account of all our estimates of
$\gadecay$. In Section~\ref{sec:num-ex}, some numerical examples that illustrate
these estimates are given. In Subsection~\ref{4states2controls}, we give an
example in low dimension, which illustrates how our estimates compare
in different cases. In Subsection~\ref{mech}, we discuss the problem of
optimizing $\gadecay$ among a finite family $(A, B_j)$, with a fixed system
matrix $A$ and different possible choices for the control matrix $B$. We give an
algorithm which uses our estimates to reduce the number of computations
needed in the search. We illustrate this algorithm with a simple mechanical
system.

In Section \ref{sec:open-que}, we list some open questions, and in
Section~\ref{conclusions}, we list the conclusions of this article.

In what follows, we use the notation $\|Y\|_F$ for the Frobenius norm of a matrix $Y$. It is given by
\[
 \|Y\|^2_F = \operatorname{trace} (Y^*Y);
\]
this formula applies to rectangular matrices as well.

It is worth noticing that for general pairs of matrices $(A,B)$,
Eising introduced in \cite{Eising}
the so-called   ``distance to uncontrollability'', given by
\begin{equation}
\label{dist-uc}
\begin{aligned}
\duc=\duc(A,B) & =\inf
\Big\{
\left(
\|\de A\|^2_F+\|\de B\|^2_F\right)^{\frac{1}{2}}:
 \quad
\de  A\in \BC^{n\times n},\;  \de  B\in \BC^{n\times m},  \\
  & \qquad \qquad \quad (A+\de A, B+\de B) \enspace \text{uncontrollable} \Big\}.
\end{aligned}
\end{equation}
He proved that
$$
\duc(A,B) = \min_{\la\in \BC} \si_{\text{min}}(\big[A-\la I, B\big]),
$$
where $\si_{\text{min}}$ stands for the minimal
singular value.
Estimates for the quantity $\duc$ and methods for its computation
have been studied further in numerous works, see
\cite{BoleyLu86}, \cite{Demmel}, \cite{GahiLaub}, \cite{He1997}, \cite{Karow-Kressner},
\cite{SonThuan} and references therein.
Related characteristics were studied in the works \cite{VanLoan85},
\cite{HinrPritch1986b} and others.

It is not difficult to show that for any \textit{normal} matrix $A$ and for any
$B$
such that $m<n$, one has an estimate
\begin{equation}
\label{est-duc}
\duc(A,B)\le \min
\big(
d_0(A,B), r_{m+1}(\si(A))
\big),
\end{equation}
where $r_k\big(\si(A)\big)$ is the radius of the smallest disk containing at least $k$ points
of $\si(A)$. (If $iA$ is Hermitian, then $r_{m+1}(\si(A))=\frac 12 \De_{m}$.)

One gets from it a certain relationship between
$\gadecay$ and $\duc$ for $m=1$.
Indeed, if $m=1$, then by
\eqref{eq:gadec},
\begin{equation}
\label{est-d-delta}
\gadecay > \frac
{d_0(A,B)}{\sqrt{2}\cdot\sqrt{1+2\,\frac{\|B\|^2}{\de(A)^2}}}\,.
\end{equation}
By \eqref{est-duc}, $\duc\le \min \big(d_0(A,B), \de(A)/2\big)$, and we get
\begin{equation}
\label{gadecay-duc}
\gadecay > \frac {\duc^2}{\sqrt{\|B\|^2+2 \duc^2}}\ge \frac
{\duc^2}{\sqrt{3}\,\|B\|} \quad (m=1).
\end{equation}
We do not know whether an analogous estimate holds for $m>1$. One can observe
that
the characteristics $d_0(A,B)$ and $\de(A)$ of the system $(A,B)$ are in some
sense
independent. Therefore the estimate \eqref{est-d-delta}, which uses both
characteristics, gives in fact more information than \eqref{gadecay-duc}.

Numerical methods for solving matrix algebraic Riccati equations, in fact, have
been much investigated;
see \cite{Burns-etl}, \cite{PandKennLaub} 
and books by Sima \cite{Sima-book} and Datta \cite{Datta-book}.
We refer to \cite{HenchHeKuMehrm98}, \cite{SafonovAthans} and \cite{BorrelliKeviczky} for some
other interesting aspects of the linear quadratic problem.

\section{The main result on the location of closed-loop eigenvalues}
\label{mainresult}

The spectral theorem yields the decomposition
\begin{equation}
 \label{eq:A-decomposition}
A = \sum_{j=1}^n i\lambda_j v_jv_j^*,
\end{equation}
where the
eigenvalues $i \la_j$ of $A$ are assumed to satisfy \eqref{eq:la-order} and
the eigenvectors $v_j$  form an orthonormal
basis of $\mathbb{C}^n$ (see \eqref{eq:v-j}).
Moreover,
$B$ and $B^*$ decompose as
\begin{equation}
 \label{eq:decomp-b}
 B = \sum_{j=1}^n v_j b_j^*\qquad B^* = \sum_{j=1}^n b_jv_j^*,
\end{equation}
where the $b_j$'s have been defined in \eqref{eq:def-bj}.

For any index $k$, $1\le k\le n-1$, we consider the closed right triangle $T_k$
in $\mathbb{C}$ with vertices at the points
\[
i\la_k, \quad
i\la_{k+1}, \quad
- \frac {\la_{k+1}-\la_k} 2+ i\, \frac {\la_{k+1}+\la_k} 2\, .
\]
All these triangles lie in the half-plane $\Re z\le 0$
(see Figure \ref{zeros-area-small}). For any $k$, $1\le k\le n$, we put
\begin{equation}
 \label{eq:circles-m}
 \rho_k = \frac{\|b_k\|}{1 + 2\frac{\|B\|^2}{\delta_k^2}}\; ,
 \qquad
 \rho^1_k = \frac{\|b_k\|}{\sqrt{1 + 2\frac{\|B\|^2}{\delta_k^2}}}\; .
\end{equation}

\begin{figure}
\begin{center}
\includegraphics[width=220pt]{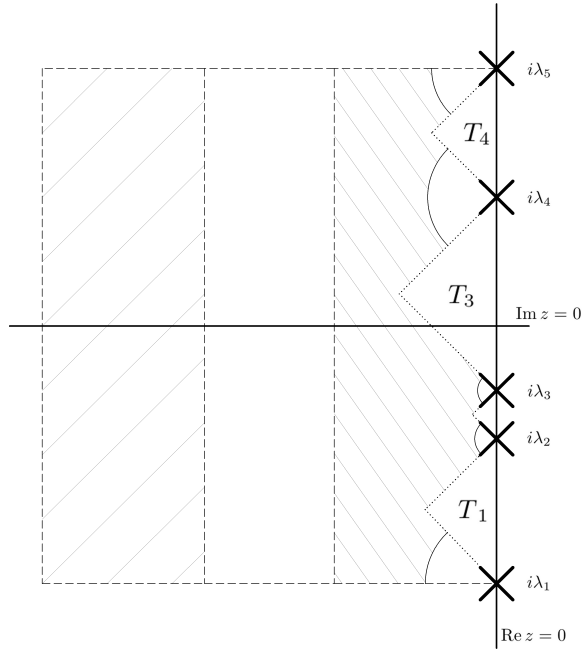}
\caption{Location of the eigenvalues of $A_{\text{cl.loop}}$.}
\label{zeros-area-small}
\end{center}
\end{figure}

Our next goal is to prove the following result.
\begin{theor}
\label{thm:main-detailed}
\begin{enumerate}
\item
\label{item}
The eigenvalues $\nu_j$ of the closed-loop system lie in  the
box
$[-\|B\|,0)\times[\lambda_1,\lambda_n]$, outside the
triangles $T_k$ and outside the closed
disks centered in $i\lambda_k$ of radii
$\rho_k$, given by \eqref{eq:circles-m}.

\item If moreover, $m\le n$ and the smallest singular value $\sigma_m$ of $B$
satisfies $\sigma_m > 2\sqrt{2} \Delta$, then  exactly
$m$ eigenvalues of the closed-loop system lie in  box
\[
[-\|B\|,-\frac{\sqrt{6}}{4}\sigma_m]\times[\lambda_1,\lambda_n],
\]
and the other $n-m$ eigenvalues lie in the box
\[
(-\sqrt{3}\Delta, 0)\times[\lambda_1,\lambda_n].
\]

\item In the case $m=1$, the assertion of (\ref{item}) holds for disks
with the same centra and larger radii $\rho^1_k$, instead of
$\rho_k$.
\end{enumerate}
\end{theor}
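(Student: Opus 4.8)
The plan is to convert the location of $\siclloop$ into a single scalar equation and then extract every assertion from its real and imaginary parts, plus one positivity estimate; only the statement about the disks needs real work.

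\smallskip
\noindent\textbf{Step 1 (scalar reformulation).} Since $Q=R=I$, the closed‑loop eigenvalues are the eigenvalues of $A-BB^*X$, i.e.\ the eigenvalues in $\BC_-$ of the Hamiltonian matrix $\left(\begin{smallmatrix} A & -BB^* \\ -I & -A^*\end{smallmatrix}\right)$, whose stable invariant subspace is the graph of $X$. Writing an eigenvector as $\binom{x}{Xx}$ and using $A^*=-A$ one gets $Xx=(A-\nu I)^{-1}x$ and hence $\bigl[(\nu I-A)^2-BB^*\bigr]w=0$ with $w=(\nu I-A)^{-1}x\ne0$. So for $\nu\in\BC_-$, $\nu\in\siclloop$ iff $\det\bigl[(\nu I-A)^2-BB^*\bigr]=0$; since $(\nu I-A)^2=\operatorname{diag}\bigl((\nu-i\la_j)^2\bigr)$ in the basis $\{v_j\}$, by Sylvester's identity this is equivalent to $1\in\si\bigl(S(\nu)\bigr)$ with $S(\nu)=\sum_j(\nu-i\la_j)^{-2}b_jb_j^*\in\BC^{m\times m}$. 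Taking a unit eigenvector $q$ of $S(\nu)$ for the eigenvalue $1$ and setting $c_j=|b_j^*q|^2\ge0$, pairing with $q$ gives
\begin{equation*}
\sum_{j=1}^n\frac{c_j}{(\nu-i\la_j)^2}=1,\qquad \sum_j c_j=\|Bq\|^2\le\|B\|^2,\quad c_k\le\|b_k\|^2 .
\tag{$\star$}
\end{equation*}
Write $\nu=-s+it$, $s\ge0$, $z_j=\nu-i\la_j$, so $\Re z_j^2=s^2-(t-\la_j)^2$ and $\Im z_j^2=-2s(t-\la_j)$.

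\smallskip
\noindent\textbf{Step 2 (box and triangles).} From $\Re(\star)$, $\sum_j c_j\bigl(s^2-(t-\la_j)^2\bigr)\bigl(s^2+(t-\la_j)^2\bigr)^{-2}=1$; bounding each summand by $c_j/s^2$ yields $s\le\|B\|$, and $s=0$ makes the left side $\le0$, so $0<s\le\|B\|$. From $\Im(\star)$, $\sum_j c_j(t-\la_j)\bigl(s^2+(t-\la_j)^2\bigr)^{-2}=0$, impossible if all $t-\la_j$ have one sign, so $\la_1\le t\le\la_n$. Finally, if $\nu\in T_k$ then $s\le|t-\la_j|$ for every $j$, so each summand of $\Re(\star)$ is $\le0$, contradicting $\Re(\star)=1$; hence $\nu$ avoids all the $T_k$. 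This is the first half of (\ref{item}).

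\smallskip
\noindent\textbf{Step 3 (the disks — the delicate step).} Fix $k$ and assume, for contradiction, $|z_k|<\rho_k$ (resp.\ $<\rho^1_k$ if $m=1$). Using $\|b_k\|\le\|B\|$ and AM–GM one checks $\rho_k\le\delta_k/(2\sqrt2)$ and $\rho^1_k\le\delta_k/\sqrt2$, so $|z_k|<\delta_k/2$; thus $i\la_k$ is the eigenvalue of $A$ closest to $\Im\nu$, and since $\nu\notin T_k$ (Step 2) with $\Im\nu$ nearer to $\la_k$ than to its other neighbour we get $|\Re z_k|>|\Im z_k|$, i.e.\ $|\Im z_k|<|z_k|/\sqrt2$. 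For $j\ne k$ this upgrades the crude $|z_j|\ge\delta_k-|z_k|$ to
\[
|z_j|^2=(\Re\nu)^2+(\Im\nu-\la_j)^2>\tfrac12|z_k|^2+\bigl(\delta_k-\tfrac{|z_k|}{\sqrt2}\bigr)^2=\psi_k(|z_k|),\qquad \psi_k(r):=r^2-\sqrt2\,\delta_k\,r+\delta_k^2,
\]
where $\psi_k(r)=(r-\delta_k/\sqrt2)^2+\delta_k^2/2\ge\delta_k^2/2>0$ and $\psi_k$ is decreasing on $[0,\delta_k/\sqrt2]$. In particular $\Re z_j^2<0$ for $j\ne k$, so $S_{\ne k}(\nu):=S(\nu)-z_k^{-2}b_kb_k^*$ has $\Re\bigl(I-S_{\ne k}(\nu)\bigr)\succeq I$; hence $M:=I-S_{\ne k}(\nu)$ is invertible, and factoring the rank‑one term out of $\det(I-S(\nu))=0$ gives $z_k^2=b_k^*M^{-1}b_k$. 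Since $\Re(M^{-1})=M^{-*}(\Re M)M^{-1}\succeq M^{-*}M^{-1}$,
\[
|z_k|^2\ \ge\ \Re\bigl(b_k^*M^{-1}b_k\bigr)=b_k^*\Re(M^{-1})b_k\ \ge\ \|M^{-1}b_k\|^2\ \ge\ \frac{\|b_k\|^2}{\|M\|^2},
\]
while for $m=1$, $M$ is scalar and $|z_k|^2=\|b_k\|^2/|M|$. Using $\|S_{\ne k}(\nu)\|\le\|B\|^2\max_{j\ne k}|z_j|^{-2}<\|B\|^2/\psi_k(|z_k|)$ (via $\|\sum_{j\ne k}b_jb_j^*\|\le\|B^*B\|$) we obtain $|z_k|\ge\|b_k\|\,\psi_k(|z_k|)\bigl(\psi_k(|z_k|)+\|B\|^2\bigr)^{-1}$ (resp.\ the analogous inequality with $|z_k|^2$ and $\|b_k\|^2$ for $m=1$). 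The right‑hand side is decreasing in $|z_k|$, so replacing $|z_k|$ by $\rho_k$ (resp.\ $\rho^1_k$) and invoking $\psi_k\ge\delta_k^2/2$ together with $\|b_k\|-\rho_k=\|b_k\|\,\tfrac{2\|B\|^2}{\delta_k^2+2\|B\|^2}$ (resp.\ $\|b_k\|^2-(\rho^1_k)^2=\|b_k\|^2\,\tfrac{2\|B\|^2}{\delta_k^2+2\|B\|^2}$) collapses it to $|z_k|>\rho_k$ (resp.\ $>\rho^1_k$), a contradiction. This proves the disk statements in (\ref{item}) and (3). The hard part is precisely the sharpened separation $|z_j|^2>\psi_k(|z_k|)$ coming from the triangle exclusion: the clean identity $\psi_k(\rho_k)\ge\delta_k^2/2$ is exactly what makes the final algebra close, and with the crude $|z_j|\ge\delta_k-|z_k|$ it does not.

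\smallskip
\noindent\textbf{Step 4 (the high‑gain dichotomy).} If $s\ge\sqrt3\,\De$, then $(t-\la_j)^2\le\De^2\le s^2/3$ for all $j$, so $\Re z_j^2\ge\tfrac23s^2$ and $|z_j|^2\le\tfrac43s^2$, whence $\Re z_j^{-2}\ge\tfrac{3}{8s^2}$; therefore $\Re S(\nu)\succeq\tfrac{3}{8s^2}\sum_jb_jb_j^*=\tfrac{3}{8s^2}B^*B\succeq\tfrac{3\sigma_m^2}{8s^2}I$, and pairing $S(\nu)q=q$ with $q$ gives $1\ge\tfrac{3\sigma_m^2}{8s^2}$, i.e.\ $s\ge\tfrac{\sqrt6}{4}\sigma_m$. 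Hence each $\nu\in\siclloop$ has $\Re\nu>-\sqrt3\,\De$ or $\Re\nu\le-\tfrac{\sqrt6}{4}\sigma_m$, disjoint ranges once $\sigma_m>2\sqrt2\,\De$; with Step 2 this gives the two boxes in (2). For the multiplicities, apply the same dichotomy to $(A,\tau B)$, $\tau\ge1$: no closed‑loop eigenvalue ever meets the line $\Re\nu=-\sqrt3\,\De$, so $\#\{\nu\in\siclloop(A,\tau B):\Re\nu<-\sqrt3\,\De\}$ is independent of $\tau$, and from $\tau^{-2n}\det[(\tau\mu I-A)^2-\tau^2BB^*]\to\mu^{2(n-m)}\prod_{\ell=1}^m(\mu^2-\sigma_\ell^2)$ exactly $m$ closed‑loop eigenvalues escape to $\Re\nu\to-\infty$ and the remaining $n-m$ stay bounded; so the count is $m$ and $n-m$ for every $\tau\ge1$. (If $A$ has a multiple eigenvalue, $\delta_k=0$ makes the disks degenerate and those statements vacuous, while Steps 1, 2, 4 are unaffected; throughout, $(A,B)$ is assumed controllable — equivalent to stabilizable since $\si(A)\subset i\BR$ — which is what guarantees $\siclloop\subset\BC_-$.)
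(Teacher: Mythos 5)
Your Steps~1 and~2 reproduce, in different words, the paper's own reduction: your matrix $S(\nu)$ is $I-\Phi(\nu)$ for the rational function $\Phi(z)=I-B^*(zI-A)^{-2}B$ used throughout Section~\ref{mainresult}, and the box/triangle exclusions come from the same computation of $\Re\Phi$ and $\Im\Phi$. Step~3 also rests on the same pivot identity $z_k^2=b_k^*M^{-1}b_k$ (your $M=I-S_{\ne k}(\nu)$ is exactly the paper's $C(z)$) together with $\Re M\succeq I$ and an operator-norm bound on $M$. Step~4, by contrast, is a genuinely different route to the count $m$ vs.\ $n-m$: you homotope $B\mapsto\tau B$, use the forbidden band $|\Re\nu|\in[\sqrt3\De,\tfrac{\sqrt6}4\tau\si_m)$ to keep the count invariant, and read off the limit $\tau\to\infty$; the paper instead does an argument-principle / winding-number computation for $\det\Phi$ around a rectangular contour. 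Both are valid; yours is perhaps more elementary, but the limiting step deserves a sentence making explicit that the $n-m$ ``slow'' zeros satisfy $\nu/\tau\to0$, hence for large $\tau$ fall below $\tfrac{\sqrt6}{4}\tau\si_m$ and therefore (by the dichotomy) below $\sqrt3\De$.

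There is, however, a genuine gap in Step~3 in the $m=1$ branch. You write that $\rho_k\le\delta_k/(2\sqrt2)$ and $\rho_k^1\le\delta_k/\sqrt2$ imply $|z_k|<\delta_k/2$, hence $i\lambda_k$ is the eigenvalue of $A$ nearest to $\Im\nu$ and the triangle exclusion yields $|\Re z_k|>|\Im z_k|$. That implication is correct for $\rho_k$ (since $1/(2\sqrt2)<1/2$) but \emph{fails} for $\rho_k^1$, because $1/\sqrt2>1/2$: one can have $\delta_k/2<\rho_k^1<\delta_k/\sqrt2$ (for instance, $m=1$, $\|B\|>\delta_k/\sqrt2$, and $|b_k|$ close to $\|B\|$). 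If $\delta_k/2\le |z_k|<\rho_k^1$, then $\Im\nu$ may well be closer to $\lambda_{k\pm1}$ than to $\lambda_k$; the triangle exclusion no longer gives $s>|t-\lambda_k|$, so $|\Re z_k|>|\Im z_k|$ and hence your separation bound $|z_j|^2>\psi_k(|z_k|)$ are not established, and the argument does not close. The paper circumvents this by a purely geometric observation used in Lemmas~\ref{circles-m} and~\ref{circles}: if $z\in\overline{D(i\lambda_k,\delta_k/\sqrt2)}$, then either $z$ lies in one of the triangles $T_\ell$, or $|z-i\lambda_j|>\delta_k/\sqrt2$ for all $j\ne k$ --- no claim about which $\lambda_\ell$ is nearest to $\Im\nu$ is needed. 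Note also that your final algebra only uses the inequality $\psi_k\ge\delta_k^2/2$, which is precisely the bound $\min_{j\ne k}|z_j|^2>\delta_k^2/2$ of the paper; the sharper $\psi_k(|z_k|)$ you derive is never actually exploited. Replacing your $|\Re z_k|>|\Im z_k|$ route by the geometric disk-intersection fact repairs the gap and makes the computation close exactly as in the paper.
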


\

Notice that $\ellest=\min_k\rho_k/\sqrt{2}$,
$\ellest^1=\min_k\rho^1_k / \sqrt{2}$
(see Figure \ref{zeros-area-small}). Therefore
Theorem \ref{thm:main} is an immediate consequence of the above theorem.

\textit{Remarks.}
\begin{enumerate}

\item
Though we only deal with finite dimensional optimal control, we believe that
the lower bounds for the decay rate $\gadecay$, given in Theorem~1, can be extended to well-posed systems with
unbounded skew-symmetric operator $A$. Then, in order to
get a nontrivial estimate, $B$ should be unbounded, but still can
be finite dimensional.
We refer to \cite{Mikk2006} and references therein for a discussion of
exponential stabilization of the closed loop systems obtained by linear quadratic optimization.
For infinite dimensional systems, the choice $Q=I$
(or, more generally, $Q=f(iA)$, where $f$ is a positive function on $\mathbb R$),
is rather natural.

In \cite{CurWeiss}, the same question was discussed for the
collocated feedback $u=-B^*x$, which in many cases
stabilizes the system. This choice of feedback is very
common, for instance, in the control of flexible structures.
In general, the decay rates of the corresponding closed loop
systems are incomparable, and one can give examples when the
collocated feedback yields much lower decay rate
than the linear quadratic optimization.

\item \label{remark-shifted}
It should also be mentioned that (apart from
the pole placement algorithms), there is a standard way
to obtain a closed loop system with a prescribed
decay rate. In application to our case, one has to
fix some shift $\tau>0$ and find a linear quadratic optimal feedback $F$ for the
pair $(A+ \tau I, B)$.
Then the closed loop matrix $A-BF$ will have $\gadecay>\tau$. See, for instance,
\cite[Section 3.5]{AndersMoore}. This method works well only for
small or moderate values of~$\tau$.

For instance, take the $11\times 11$ matrix
$A=i\operatorname{diag}(-5, -4, \dots, 4, 5)$ and the $11\times 1$ column
$B=(1, \dots, 1)^*$.
Let $u(t)=-F_\tau x(t)$ be the feedback obtained by the above procedure,
$A^\tau_{\text{cl.loop}}$ be the corresponding closed look matrix.
Let  $X_\tau$ be given by $J^u(x_0) = x_0^* X_\tau x_0$
(see the Introduction), with $J^u$ given by
\eqref{eq:j-diag}, where $(x(t), u(t))$ is the motion
that corresponds to this feedback.
If no shift is applied to $A$ ($\tau=0$), then
$\gadecay\approx 0.66$ and $\|X_\tau\|\approx 5.49$.
Next, $X_\tau$ has the norm around $1.23\cdot 10^3$
for $\tau=1$ and the norm around $1.62 \cdot 10^6$
for $\tau=2$. The latter choice of the
shift $\tau$ gives a large quadratic cost
functional even if one omits in \eqref{eq:j-diag} the term containing $u(t)$:
the matrix
$X^0_\tau\defin \int_0^\infty \exp(A^{\tau\,*}_{\text{cl.loop}} t)\exp(A^\tau_{\text{cl.loop}} t)\, dt$
has the norm around $4.6\cdot 10^4$ for $\tau=2$.



\end{enumerate}

Before proving Theorem \ref{thm:main-detailed}, we need some preliminaries and several lemmas.

\subsection{The function $\Phi$ and its zeros}
\label{function-phi}

The rational matrix function defined as
\begin{equation}
 \label{eq:phi-problem}
\Phi(z) = I - B^*(zI - A)^{-2}B.
\end{equation}
is important in the control system theory.
It is known that $\Phi$ factorizes as
\[
 \Phi(z) = M(-\overline{z})^*M(z),
\]
where
\[
 M(z) = B^*X(zI-A)^{-1}B + I.
\]
The theory also shows that
\[
 M(z)^{-1} = -B^*X(zI - (A - BB^*X))^{-1}B + I.
\]
See, for instance, the book by Zhou, Doyle and Glover \cite[chapter
13.4]{ZhouDoyGlov} for a proof of this factorization.

Hence,
the eigenvalues of $A - BB^*X$ are poles of $M(z)^{-1}$ in the sense
that if $z_0$ is an eigenvalue of $A - BB^*X$ then $\det (M(z_0)^{-1}) =
\infty$. It follows from the factorization of $\Phi(z)$ that the zeros of
$\Phi(z)$ (in the sense that $\det \Phi(z) = 0$) are
\[
 \{z \in \mathbb{C} : \det\Phi(z) = 0\} = \{z \in \mathbb{C} : z \in
\siclloop\ \textrm{or}\ -\overline{z} \in \siclloop\}.
\]

\begin{definition}
 Let $\Phi(z)$ be as in \eqref{eq:phi-problem} and $z_0 \in \mathbb{C}$ such
that $\det \Phi(z_0) = 0$. If $\Re z_0 < 0$, then $z_0$ is called a \emph{stable
zero} of $\Phi(z)$. If $\Re z_0 > 0$, then $z_0$ is called an \emph{anti-stable
zero} of $\Phi(z)$.
\end{definition}
So the stable zeros of $\Phi(z)$ are exactly the eigenvalues
of $A_{\text{cl.loop}}$.

The function $\Phi$ will be very useful to make estimations of
the cost characteristic $\gamma_{\mathrm{decay}}$. The relation between
$\gamma_{\mathrm{decay}}$ and $\Phi(z)$ is
\begin{equation}
 \gadecay(A, B) = \min \{|\Re z| : \det\Phi(z) = 0\}.
\end{equation}

If we define
\begin{equation}
 \label{eq:f-def}
 f(\lambda; z) = \frac{1}{(z - \lambda)^2},
\end{equation}
then, for fixed $z \in \mathbb{C}$,
$f$ is holomorphic in $\lambda$ on
$\BC\setminus\{z\}$
and hence $f(A; z)$ is well defined if $z \notin
\sigma(A)$. We can write
\[
 \Phi(z) = I - B^*f(A; z)B.
\]
Using \eqref{eq:A-decomposition} and \eqref{eq:decomp-b}, we
get
\begin{equation}
\label{eq:phi-decomp}
\Phi(z) = I - \sum_{j=1}^n\frac{1}{(z-i\lambda_j)^2}b_jb_j^*.
\end{equation}

An important remark is that $\Phi(z)$ is Hermitian and positive along the
imaginary axis where it is defined.
Indeed, we have $A=iA_0$, where $A_0=A_0^*$.
Let $t \in \mathbb{R}$, $t \neq \lambda_j$, then
\[
\Phi(it) = I - B^*(itI - A)^{-2}B = I + B^*(tI - A_0)^{-2}B > I,
\]
because $B^*(tI - A_0)^{-2}B$ is Hermitian and positive.

\begin{lemma}
 \label{zero-location}
The zeros of $\Phi$ lie in the box in the complex plane given by
$|\Re z| \leq \|B\|$, $\lambda_1 \leq \Im z \leq \lambda_n$.
\end{lemma}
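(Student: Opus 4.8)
The plan is to localize the zeros of $\Phi$ by exploiting the explicit spectral expansion \eqref{eq:phi-decomp}. For the vertical strip $\lambda_1 \leq \Im z \leq \lambda_n$, I would argue by contradiction: suppose $\Phi(z_0)w = 0$ for some unit vector $w$ and some $z_0$ with $\Im z_0 > \lambda_n$ (the case $\Im z_0 < \lambda_1$ is symmetric). Taking the inner product of \eqref{eq:phi-decomp} with $w$ gives $1 = \sum_{j=1}^n (z_0 - i\lambda_j)^{-2}|b_j^*w|^2$, so in particular $1 = \sum_j \Re\big((z_0 - i\lambda_j)^{-2}\big)|b_j^*w|^2$. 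Writing $z_0 = x + iy$ with $y > \lambda_n \geq \lambda_j$, each $z_0 - i\lambda_j = x + i(y - \lambda_j)$ has a strictly positive imaginary part, and a short computation shows $\Re\big((x + is)^{-2}\big) = (x^2 - s^2)/(x^2 + s^2)^2 < \tfrac{1}{x^2+s^2}$, but more usefully one checks that for the combination to sum to $1$ one needs the "horizontal" part of the identity as well. The cleanest route is: from $1 = \sum_j (z_0-i\lambda_j)^{-2}|b_j^*w|^2$ one also gets, comparing with $\langle \Phi(z_0)^* w, w\rangle$ (and using $\Phi(\bar z)^* = \Phi(-z)$-type symmetry, or simply that $\det\Phi(z_0)=0 \iff \det\Phi(\bar z_0)=0$ via the factorization), that $z_0$ must actually be such that both $z_0$ and $-\bar z_0$ contribute. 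I would instead just use monotonicity: the scalar function $g(z) = \langle\Phi(z)w,w\rangle - 1 = -\sum_j (z-i\lambda_j)^{-2}|b_j^*w|^2$ restricted to the ray $z = x_0 + iy$, $y \geq \lambda_n$, cannot vanish together with forcing $\langle\Phi w,w\rangle = 0$; more robustly, write $\Phi(z) = I - B^* f(A;z) B$ and note $\|B^* f(A;z) B\| \leq \|B\|^2 \|f(A;z)\| = \|B\|^2 \max_j |z - i\lambda_j|^{-2}$, which is $< 1$ as soon as $\min_j|z - i\lambda_j| > \|B\|$; hence no zeros there, but this only controls the region far from $\sigma(A)$.

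For the vertical strip the right argument is the following. Fix $z_0$ with $\Im z_0 = y_0 > \lambda_n$ and suppose $\Phi(z_0)w = 0$, $\|w\| = 1$. Then $1 = \langle B^* f(A;z_0) B w, w\rangle = \sum_j (z_0 - i\lambda_j)^{-2} |b_j^* w|^2$. Taking imaginary parts: $0 = \sum_j \Im\big((z_0 - i\lambda_j)^{-2}\big)|b_j^* w|^2$. Now $\Im\big((x + is)^{-2}\big) = -2xs/(x^2+s^2)^2$; with $s = y_0 - \lambda_j > 0$ for all $j$, every term has the sign of $-x$ (where $z_0 = x + iy_0$), so unless $x = 0$ all terms have the same strict sign and the sum cannot be zero — forcing $x = \Re z_0 = 0$. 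But $\Phi$ is strictly positive definite on the imaginary axis (shown in the text just before the lemma), so $\Phi(z_0)$ is invertible there, a contradiction. If $x = \Re z_0 = 0$ is excluded, we are done; the case $y_0 < \lambda_1$ is identical with all $s = y_0 - \lambda_j < 0$. This proves $\lambda_1 \leq \Im z \leq \lambda_n$ for every zero.

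For the horizontal bound $|\Re z| \leq \|B\|$, the symmetry $\{z : \det\Phi(z) = 0\} = \siclloop \cup (-\overline{\siclloop})$ reduces matters to bounding the real parts of $\siclloop$, i.e. bounding $\|A_{\mathrm{cl.loop}}\| = \|A - BB^*X\|$. Since any eigenvalue $\nu$ of $A_{\mathrm{cl.loop}}$ satisfies $|\Re\nu| \leq \|A_{\mathrm{cl.loop}}\|$... but this gives $\|A\| + \|B\|^2\|X\|$, which is not what we want. Better: if $\Phi(z_0)w = 0$ with $|\Re z_0| > \|B\|$, then since we already know $\lambda_1 \leq \Im z_0 \leq \lambda_n$, we have $|z_0 - i\lambda_j|^2 = (\Re z_0)^2 + (\Im z_0 - \lambda_j)^2 \geq (\Re z_0)^2 > \|B\|^2$ for every $j$, hence $\|f(A;z_0)\| = \max_j |z_0 - i\lambda_j|^{-2} < \|B\|^{-2}$, so $\|B^* f(A;z_0) B\| \leq \|B\|^2 \|f(A;z_0)\| < 1$, whence $\Phi(z_0) = I - B^* f(A;z_0)B$ is invertible — contradiction. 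So every zero satisfies $|\Re z| \leq \|B\|$ as well.

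The main obstacle is getting the vertical-strip argument clean: the naive norm bound $\|B^* f(A;z) B\| < 1$ only excludes a disk around each $i\lambda_j$ of radius $\|B\|$ and does not by itself confine zeros to the strip. The imaginary-part sign argument above resolves this, but one must be careful that it genuinely forces $\Re z_0 = 0$ (every $|b_j^*w|^2 \geq 0$, and for a zero outside the strip every factor $\Im\big((z_0-i\lambda_j)^{-2}\big)$ is nonzero with a common sign, so the weighted sum vanishes only if all weights $|b_j^*w|^2$ vanish — but then $1 = 0$ from the real part, impossible — or if $\Re z_0 = 0$), after which positivity of $\Phi$ on $i\mathbb{R}$ closes the argument. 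Once the strip is established, the horizontal bound is immediate from the same norm estimate, since inside the strip the distance from $z_0$ to $\sigma(A)$ is at least $|\Re z_0|$.
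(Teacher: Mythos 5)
Your final argument is correct and is essentially the paper's own proof: the key observation in both is that for $\Im z$ outside $[\lambda_1,\lambda_n]$ every $\Im\big((z-i\lambda_j)^{-2}\big)$ has the same strict sign as $-\Re z$ (the paper phrases this as $g(i\lambda;z)$ having constant sign, hence $\Im\Phi(z)$ is sign-definite), and the paper handles $|\Re z|>\|B\|$ by showing $\Re\Phi(z)>0$ via the bound $h(i\lambda;z)\le 1/(\Re z)^2$, which is the real-part analogue of your operator-norm estimate $\|B^*f(A;z)B\|<1$. The several false starts in the first half of your write-up (including the observation that $\|B^*f(A;z)B\|<1$ alone only excludes disks around $\sigma(A)$, and the detour through the factorization symmetry) should be cut; once you pass to the scalar identity $1=\sum_j(z_0-i\lambda_j)^{-2}|b_j^*w|^2$ and take imaginary parts, the argument is complete and clean, and it even avoids the explicit appeal to $\ker B=0$ that the paper makes, since vanishing of all weights $|b_j^*w|^2$ already contradicts the real part of the same identity.
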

\begin{proof}
Recall that the real and imaginary parts of an operator $T$ are defined by
\[
 \Re T = \frac{1}{2}(T + T^*)\qquad \Im T = \frac{1}{2i}(T - T^*).
\]
Put
\[
h(\lambda; z) = \frac{1}{2}(f(\lambda; z) + f(-\lambda;
\overline{z})), \quad
 g(\lambda; z) = \frac{1}{2i}(f(\lambda; z) - f(-\lambda; \overline{z})).
\]
Then $h$ and $g$ are meromorphic in $\lambda$ on the whole plane.
It is easy to see that
\begin{equation}
\label{eq:re-im-phi}
 \Re \Phi(z) = I - B^*h(A; z)B, \qquad \Im \Phi(z) = -B^*g(A; z)B.
\end{equation}

If $z = x + iy$, $\lambda \in \mathbb{R}$, a direct computation shows
that
\begin{equation}
\label{eq:h-computation}
 h(i\lambda; z) = \frac{x^2 - (y - \lambda)^2}{(x^2 + (y -
\lambda)^2)^2},
\end{equation}
\begin{equation}
\label{eq:g-computation}
 g(i\lambda; z) = \frac{-2x(y - \lambda)}{(x^2 + (y -
\lambda)^2)^2}.
\end{equation}

First we show that if $|\Re z| > \|B\|$ then $\Re \Phi(z) > 0$ so $z$
is not a zero of $\Phi(z)$. Let $\xi \in \mathbb{C}^n$ with $\|\xi\| = 1$. Then
\[
 \langle \Re \Phi(z) \xi, \xi \rangle = 1 - \langle h(A;z) B\xi, B\xi \rangle \geq
 1 - \|B\|^2\max_{\lambda \in \sigma(A)}h(\lambda;z).
\]
Now, using \eqref{eq:h-computation}, if $|\Re z| > \|B\|$, it follows
\[
 \max_{\lambda \in i\mathbb{R}} h(\lambda; z) < \frac{1}{\|B\|^2}
\]
and therefore $\langle \Re \Phi(z) \xi, \xi \rangle > 0$ for these $z$.

Now observe that if either $\Im z < \lambda_1$  or $\Im z > \lambda_n$ then
\eqref{eq:g-computation} shows that $g(\lambda; z)$ has constant sign for all
$\lambda \in \sigma(A)$ and therefore $\Im \Phi(z)$ is either postive or
negative (since we may assume $\ker B = 0$) so that $z$ is not a zero of
$\Phi(z)$.
\end{proof}

In Lemma \ref{zero-location} we have seen that the zeros of $\Phi(z)$ cannot
be too far from the imaginary axis. The next two lemmas imply that
the zeros cannot be too close to the imaginary axis.

\begin{lemma}
\label{triangles}
Define the angles
\[
\mathfrak{A}_k=\big\{z\in \BC_-:  \frac{3\pi}{4}< \arg (z-i\la_k) <
\frac{5\pi}{4}\big\}, \qquad 1\le k\le n.
\]
If $z$ is in the left half-plane, but does not belong to the union of these
angles, then $\Re \Phi(z)\ge I$.
\end{lemma}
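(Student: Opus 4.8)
The plan is to reduce the operator inequality $\Re\Phi(z)\ge I$ to a scalar sign condition on the eigenvalues of $A$, and then to recognize that sign condition as precisely the geometric statement that $z$ avoids the angles $\mathfrak{A}_k$. First I would recall from \eqref{eq:re-im-phi} that $\Re\Phi(z) = I - B^*h(A;z)B$, and that by the spectral decomposition \eqref{eq:A-decomposition} the operator $h(A;z) = \sum_{j=1}^n h(i\lambda_j;z)\,v_jv_j^*$ is Hermitian with eigenvalues $h(i\lambda_j;z)$. Consequently, if $h(i\lambda_j;z)\le 0$ for every $j$, then $h(A;z)\le 0$ as a Hermitian operator, hence $B^*h(A;z)B\le 0$, and therefore $\Re\Phi(z)\ge I$. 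Thus it suffices to prove: if $z\in\BC_-$ lies outside $\bigcup_k\mathfrak{A}_k$, then $h(i\lambda_k;z)\le 0$ for all $k$.

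Next I would use the explicit formula \eqref{eq:h-computation}: writing $z=x+iy$ with $x<0$, we have $h(i\lambda_k;z)\le 0$ exactly when $x^2\le (y-\lambda_k)^2$, i.e.\ when $|x|\le|y-\lambda_k|$. The remaining step is the elementary translation between this inequality and the angular description of $\mathfrak{A}_k$: writing $z-i\lambda_k = x+i(y-\lambda_k)$, the point $z-i\lambda_k$ lies in the open cone of half-angle $\pi/4$ about the negative real axis (that is, $\tfrac{3\pi}{4}<\arg(z-i\lambda_k)<\tfrac{5\pi}{4}$) precisely when $-x>|y-\lambda_k|$, i.e.\ when $|x|>|y-\lambda_k|$, i.e.\ when $h(i\lambda_k;z)>0$. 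Hence $z\notin\mathfrak{A}_k$ forces $h(i\lambda_k;z)\le 0$; combining this over all $k$ gives $h(A;z)\le 0$ and therefore $\Re\Phi(z)\ge I$, as claimed.

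I do not anticipate any serious obstacle here. The only point requiring a little care is the slightly fiddly bookkeeping in the last step — matching the sign of the numerator $x^2-(y-\lambda_k)^2$ with the open angular sectors $\mathfrak{A}_k$ — and in particular checking the boundary case $|x|=|y-\lambda_k|$, where $h(i\lambda_k;z)$ vanishes: such $z$ lie on the boundary of the cone and hence outside the open angle $\mathfrak{A}_k$, which is consistent with the conclusion $\Re\Phi(z)\ge I$ still holding there.
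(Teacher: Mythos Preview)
Your proposal is correct and follows essentially the same route as the paper: translate $z\notin\bigcup_k\mathfrak{A}_k$ into the scalar inequalities $x^2-(y-\lambda_j)^2\le 0$, conclude $h(A;z)\le 0$ from \eqref{eq:h-computation}, and read off $\Re\Phi(z)=I-B^*h(A;z)B\ge I$. The paper's own proof is just a terser version of exactly this argument.
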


\begin{proof}
Put $z=x+iy$. It follows from the hypothesis on $z$ that
\[
x^2 - (y - \lambda_j)^2 \leq 0, \qquad j=1,\dots, n.
\]
Defining $h(\lambda; z)$ as in the proof of Lemma \ref{zero-location} and using
\eqref{eq:h-computation}, we get that $h(A; z) \leq 0$, so that $\Re \Phi(z)
\geq I$.
\end{proof}

It follows from the above two lemmas that the stable zeros of $\Phi$ lie
in the band $\lambda_1 \leq \Im z \leq \lambda_n$ and
outside the triangles $T_1, \dots, T_{n-1}$.

\begin{lemma}
\label{circles-m}
$\Phi$ has no zeros in the disks $\overline{D(i\la_k, \rho_k)}$, $1\le k\le n$.
\end{lemma}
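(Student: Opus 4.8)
The plan is to show that if $z$ lies in one of the disks $\overline{D(i\la_k,\rho_k)}$, then $\Phi(z)$ cannot be singular, by producing a unit vector $\xi$ with $\langle\Phi(z)\xi,\xi\rangle\neq 0$. The natural candidate is $\xi=b_k/\|b_k\|$ (assuming $b_k\neq 0$; if $b_k=0$ then $\rho_k=0$ and there is nothing to prove). Using the decomposition \eqref{eq:phi-decomp}, we compute
\[
\langle\Phi(z)\xi,\xi\rangle
= 1 - \sum_{j=1}^n\frac{|\langle b_j,\xi\rangle|^2}{(z-i\la_j)^2}
= 1 - \frac{\|b_k\|^2}{(z-i\la_k)^2} - \sum_{j\neq k}\frac{|\langle b_j,b_k\rangle|^2}{\|b_k\|^2(z-i\la_j)^2}.
\]
The dominant term is $\|b_k\|^2/(z-i\la_k)^2$: when $z$ is within $\rho_k$ of $i\la_k$ this term has modulus at least $\|b_k\|^2/\rho_k^2 = 1 + 2\|B\|^2/\delta_k^2$ (by the definition \eqref{eq:circles-m} of $\rho_k$), so in particular its modulus exceeds $1$.

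The next step is to bound the error terms. For $j\neq k$ and $z\in\overline{D(i\la_k,\rho_k)}$ we have $|z-i\la_j|\geq |\la_j-\la_k|-\rho_k\geq \delta_k-\rho_k$. Since $\rho_k\leq\|b_k\|\leq\|B\|$ and (one checks) $\rho_k\leq\delta_k/2$ under the standing hypotheses — more carefully, from $\rho_k=\|b_k\|/(1+2\|B\|^2/\delta_k^2)\leq \delta_k^2/(2\|B\|)\cdot\|b_k\|/\|B\|\cdot(\text{something})$, so one gets $|z-i\la_j|\geq \delta_k/2$ — we obtain $|z-i\la_j|^{-2}\leq 4/\delta_k^2$. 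Using $\sum_{j\neq k}|\langle b_j,b_k\rangle|^2\leq \|b_k\|^2\sum_{j\neq k}\|b_j\|^2\leq \|b_k\|^2\,\|B\|_F^2$; but a cleaner route is to note $\sum_{j}|\langle b_j,b_k\rangle|^2=\|Bb_k\|^2\leq\|B\|^2\|b_k\|^2$, hence $\sum_{j\neq k}|\langle b_j,b_k\rangle|^2\leq \|B\|^2\|b_k\|^2$. Therefore the total error term is bounded by
\[
\sum_{j\neq k}\frac{|\langle b_j,b_k\rangle|^2}{\|b_k\|^2|z-i\la_j|^2}
\leq \frac{\|B\|^2}{\delta_k^2/4} = \frac{4\|B\|^2}{\delta_k^2}.
\]
Hmm, this is slightly too lossy; the intended bound must use $|z-i\la_j|\ge\delta_k-\rho_k$ more carefully, or combine the $\|B\|^2$ from the numerator with a sharper estimate, to land on exactly $2\|B\|^2/\delta_k^2$. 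This is the bookkeeping I would tune.

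The conclusion then follows from the triangle inequality: $|\langle\Phi(z)\xi,\xi\rangle|\geq |\|b_k\|^2/(z-i\la_k)^2| - 1 - (\text{error}) \geq (1 + 2\|B\|^2/\delta_k^2) - 1 - 2\|B\|^2/\delta_k^2 = 0$, and one must check the inequality is \emph{strict} so that $\langle\Phi(z)\xi,\xi\rangle\neq 0$, hence $\Phi(z)$ is not singular and $z$ is not a zero. The main obstacle is getting the constants to close exactly as in \eqref{eq:circles-m}: one needs the estimate $|z-i\la_j|\geq\delta_k-\rho_k$ together with a proof that $\rho_k$ is small enough relative to $\delta_k$ for the remainder to be controlled by precisely $2\|B\|^2/\delta_k^2$ rather than a worse constant. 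For the case $m=1$ (part (3), radii $\rho_k^1$), the vectors $b_j$ are scalars, so $\langle b_j,b_k\rangle = \overline{b_j}b_k$ and $\sum_{j\neq k}|b_j|^2\leq\|B\|^2$ directly; the error term is then bounded without the extra factor coming from the Cauchy–Schwarz step, which is exactly what allows the larger radius $\rho_k^1=\|b_k\|/\sqrt{1+2\|B\|^2/\delta_k^2}$.
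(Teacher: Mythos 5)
The overall strategy has a fatal logical gap: exhibiting one unit vector $\xi$ with $\langle\Phi(z)\xi,\xi\rangle\neq 0$ does \emph{not} show that $\Phi(z)$ is nonsingular. For instance the matrix $\operatorname{diag}(1,0)$ is singular but satisfies $\langle\operatorname{diag}(1,0)\,e_1,e_1\rangle=1\neq 0$. When $m=1$ the matrix $\Phi(z)$ is a scalar and the quadratic-form test does become conclusive, but that is precisely Lemma~\ref{circles} (part~(3) of Theorem~\ref{thm:main-detailed}), with the larger radii $\rho^1_k$; it does not give Lemma~\ref{circles-m} for $m>1$. The paper handles general $m$ by a different device: it assumes $\Phi(z)\xi=0$ with $\xi\neq 0$ and derives a contradiction. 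Concretely it isolates $C(z)=\Phi(z)+\frac{1}{(z-i\la_k)^2}b_kb_k^*$, invokes Lemma~\ref{triangles} on the $n-1$ remaining spectral points together with the preliminary geometric step $|z-i\la_j|>\de_k/\sqrt{2}$ for $j\neq k$ (itself coming from $\rho_k<\de_k/\sqrt{2}$ and the triangle picture) to obtain $\Re C(z)>I$ and $\|C(z)\|\le M=1+2\|B\|^2/\de_k^2$, deduces $\Re C(z)^{-1}>1/M^2$, and then from $\Phi(z)\xi=0$ extracts the scalar identity $b_k^*C(z)^{-1}b_k=(z-i\la_k)^2$, whose modulus forces $|z-i\la_k|>\|b_k\|/M=\rho_k$, the desired contradiction.

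There is also an arithmetic slip worth flagging: since $\|b_k\|/\rho_k=1+2\|B\|^2/\de_k^2$, the dominant term has modulus at least $\|b_k\|^2/\rho_k^2=\bigl(1+2\|B\|^2/\de_k^2\bigr)^{2}$, not $1+2\|B\|^2/\de_k^2$ as written (the latter is the corresponding value for the $m=1$ radius $\rho^1_k$). With the square in place, your error bound $4\|B\|^2/\de_k^2$ would actually have sufficed (since $(1+2t)^2>1+4t$ for $t>0$), so the bookkeeping you flagged as ``too lossy'' was not the real obstacle. The real obstacle is the one above: for $m>1$ a single quadratic-form value cannot certify invertibility of the non-Hermitian matrix $\Phi(z)$, so the argument as designed cannot be completed.
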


\begin{proof}
If $\rho_k = 0$ then the lemma is vacuously true for the corresponding $k$.
Hence, assume $\rho_k > 0$.
Suppose $\Re z < 0$ and $|z - i\lambda_k| \leq \rho_k$ for
some $k$. Fix this index $k$.
Observe that $\frac{\delta_k}{\sqrt{2}}$ is the length of
the legs of one of the triangles $T_\ell$, whose vertex is in
$i\la_k$. Since
\[
 \rho_k^2 = \frac{\|b_k\|^2}{\big(1 + 2\frac{\|B\|^2}{\delta_k^2}\big)^2} <
\frac{\|b_k\|^2\delta_k^2}{2\|B\|^2} \leq \frac{\delta_k^2}{2},
\]
it follows that $z$ belongs to $D(i\la_k,\frac{\delta_k}{\sqrt{2}})$.

It can be shown geometrically that the intersection of
$\overline{D(i\la_k,\frac{\delta_k}{\sqrt{2}})}$ with
$\overline{D(i\la_j,\frac{\delta_k}{\sqrt{2}})}$ ($j\ne k$; notice that we take
the same radii)
and with the left half-plane is either empty or is contained in one of the triangles $T_\ell$.
Therefore, if $z\in \overline{D(i\la_j,\frac{\delta_k}{\sqrt{2}})}$ for some
$j\ne k$, then $z$ is inside
of one of the triangles, and it has already been shown that then $z$ will not be a zero of $\Phi$.

So let us assume that
\begin{align}
\label{x-not-in-Dj}
|z-i\la_k|<\frac{\delta_k}{\sqrt{2}}\, , \quad \text{but} \enspace
|z-i\la_j| > \frac{\delta_k}{\sqrt{2}} \enspace \text{for} \enspace j\ne k.
\end{align}
Put
\[
 C(z) = \Phi(z) + \frac{1}{(z-i\lambda_k)^2} b_k b_k^* =
 I - B^*\bigg(\sum_{j\neq k} \frac{1}{(z-i\lambda_j)^2}v_jv_j^*\bigg) B
\]
(see \eqref{eq:A-decomposition}).
It follows from the first inequality in \eqref{x-not-in-Dj} that
$z\notin\overline{\mathfrak{A}}_j$
for $j\ne k$.
Hence Lemma  \ref{triangles}, applied to the configuration of $n-1$ points
$\{i\la_1, i\la_2, \dots, i\la_n\}\setminus \{i\la_k\}$ on the imaginary axis,
gives that $\Re C(z) > I$ (recall we have assumed $\ker B = 0$).

By \eqref{x-not-in-Dj}, we also have
\[
 \|C(z)\| \leq M\defin 1 + \frac{2\|B\|^2}{\delta_k^2}.
\]
Next, let us show that the above properties $\|C(z)\| \leq M$ and $\Re C(z) > I$, imply that $\Re C(z)^{-1} > \frac{1}{M^2}$.
Indeed, take any $\xi \in \mathbb{C}^n$ with $\|\xi\|=1$ and set $\eta = C^{-1}(z)\xi$.
Then $1 = \|\xi\| \leq \|C(z)\| \| \eta \|$ so that $\|\eta \| \geq \frac{1}{M}$. Hence,
\[
 \Re \langle C(z)^{-1} \xi\, , \xi \rangle =
 \Re \langle C(z)\eta, \eta \rangle >
 \|\eta\|^2 \geq \frac{1}{M^2}\,   ,
\]
and the inequality $\Re C(z)^{-1} > \frac{1}{M^2}$ follows.

Now suppose that $\Phi(z) \xi = 0$ for some fixed $\xi\in \mathbb{C}^m$, $\xi
\neq
0$. Then,
\[
 \big(C(z) - \frac{1}{(z - i\lambda_k)^2}b_kb_k^*\big) \xi = 0.
\]
Since $C(z)$ is invertible, we have $b_k^* \xi \neq 0$. Multiply the above
equality by $(b_k^* \xi )^{-1} b_k^*C(z)^{-1}$ and regroup terms to yield
\[
 b_k^*C(z)^{-1}b_k = (z - i\lambda_k)^2.
\]
Since $\Re C(z)^{-1} > \frac{1}{M^2}$, we get
\[
 |z - i \lambda_k|^2 = |b_k^*C(z)^{-1}b_k| > \frac{\|b_k\|^2}{M^2}= \rho_k^2.
\]
Therefore $|z -i \la_k| > \rho_k$, a contradiction.
\end{proof}

In the case $m = 1$, the above lemma can be strengthened.

\begin{lemma}
\label{circles}
If $m = 1$, then $\Phi$ has no zeros in the disks
$\overline{D(i\la_k,\rho^1_k)}$.
\end{lemma}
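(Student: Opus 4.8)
The plan is to imitate the proof of Lemma~\ref{circles-m}, taking advantage of the fact that when $m=1$ the function $\Phi$ is scalar-valued, so that the final step of that argument — which is wasteful by a factor $M$ when done with matrices — can be replaced by an exact computation.

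First I would set up the scalar picture. For $m=1$ the matrix $B$ is a column, each $b_j=B^*v_j$ is a scalar with $\|b_j\|=|b_j|$, and since $\{v_j\}$ is an orthonormal basis one has $\|B\|^2=\sum_{j=1}^n|b_j|^2$. By \eqref{eq:phi-decomp}, $\Phi(z)=1-\sum_{j=1}^n(z-i\la_j)^{-2}|b_j|^2$ is a scalar. If $\rho^1_k=0$, i.e.\ $b_k=0$, the assertion for that $k$ is vacuous, so fix $k$ with $b_k\neq 0$ and suppose, for contradiction, that $\Re z<0$ and $|z-i\la_k|\le\rho^1_k$. Put $M=1+2\|B\|^2/\delta_k^2$ and, as in Lemma~\ref{circles-m}, $C(z)=\Phi(z)+(z-i\la_k)^{-2}|b_k|^2=1-\sum_{j\neq k}(z-i\la_j)^{-2}|b_j|^2$, which is now a scalar.

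Next comes the geometric reduction, which is literally the one in the proof of Lemma~\ref{circles-m}. From $\|b_k\|\le\|B\|$ we get $(\rho^1_k)^2=\|b_k\|^2/M<\|b_k\|^2\delta_k^2/(2\|B\|^2)\le\delta_k^2/2$, hence $|z-i\la_k|<\delta_k/\sqrt2$; and if in addition $z$ lay in some closed disk $\overline{D(i\la_j,\delta_k/\sqrt2)}$ with $j\neq k$, then $z$ would lie in one of the triangles $T_\ell$, and hence, by Lemma~\ref{triangles}, would not be a zero of $\Phi$ — contradicting $\Phi(z)=0$. So I may assume $|z-i\la_j|>\delta_k/\sqrt2$ for every $j\neq k$. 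Applying Lemma~\ref{triangles} to the $n-1$ points $\{i\la_j:j\neq k\}$ then gives $\Re C(z)\ge 1$, while $|z-i\la_j|>\delta_k/\sqrt2$ for $j\neq k$ gives the strict bound $|C(z)|<M$.

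Finally comes the step where $m=1$ is actually used. Since $\Phi(z)=0$, the definition of $C(z)$ gives $C(z)=(z-i\la_k)^{-2}|b_k|^2$, and $C(z)$ is a nonzero scalar (indeed $\Re C(z)\ge 1$). Inverting directly,
\[
|z-i\la_k|^2=\frac{|b_k|^2}{|C(z)|}>\frac{\|b_k\|^2}{M}=(\rho^1_k)^2,
\]
which contradicts $|z-i\la_k|\le\rho^1_k$. Compare this with the matrix case, where one only gets $|z-i\la_k|^2=|b_k^*C(z)^{-1}b_k|\ge\Re\langle C(z)^{-1}b_k,b_k\rangle>\|b_k\|^2/M^2$; replacing $M^2$ by $M$ under the root is precisely what upgrades $\rho_k$ to $\rho^1_k=\sqrt M\,\rho_k$. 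I do not expect a genuine obstacle here: the only points needing care are that $C(z)\neq 0$ (automatic from $\Re C(z)\ge1$, or directly since $\Phi(z)=0$ with $b_k\neq0$ forces $C(z)\neq0$) and that the inequality $|C(z)|<M$ be \emph{strict}, which it is because the terms with $j\neq k$ are bounded strictly by $2|b_j|^2/\delta_k^2$ in modulus.
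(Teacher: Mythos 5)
Your proof is correct and follows essentially the same route as the paper: deduce $|z-i\lambda_j|>\delta_k/\sqrt{2}$ for $j\neq k$ as in Lemma~\ref{circles-m}, then use that for $m=1$ the relation $\Phi(z)=0$ gives the scalar identity $|b_k|^2/|z-i\lambda_k|^2 = |C(z)| < 1+2\|B\|^2/\delta_k^2$, which directly yields $|z-i\lambda_k|>\rho_k^1$. The paper writes this out without introducing $C(z)$ by name, but the computation is identical, including the observation that the extra $j=k$ term makes the bound strict.
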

\begin{proof}
Assume that $m=1$, $\Phi(z) = 0$ and for some index $k$,
$z\in \overline{D(i\la_k,\rho^1_k)}$.
Proceed as in the previous lemma to deduce that
$|z - i\lambda_j| > \frac{\delta_k}{\sqrt{2}}$ for $j
\neq k$. Now, since $\Phi(z) = 0$, we have
\[
 \frac{|b_k|^2}{(z - i\lambda_k)^2} = 1 -
\sum_{j\neq k} \frac{|b_j|^2}{(z - i\lambda_j)^2}
\]
(notice that now $b_j$ are complex numbers). It follows that
\[
 \frac{|b_k|^2}{|z - i\lambda_k|^2} \leq
 1 + \sum_{j\neq k} \frac{|b_j|^2}{|z - i\lambda_j|^2} <
1 + \sum_{j=1}^n \frac{2|b_j|^2}{\delta_k^2} =
1 + 2\,\frac{\|B\|^2}{\delta_k^2},
\]
so that $|z - i\lambda_k| > \rho^1_k$, a contradiction.
\end{proof}

\begin{lemma}
\label{gaps}
Let $\sigma_m$
be the minimum singular value of $B$. If $\sqrt{3}\Delta
< \frac{\sqrt{6}}{4}\sigma_m$, then exactly $m$ of the stable zeros of
$\Phi(z)$ lie in the box given by
\[
 -\|B\| \leq \Re z \leq -\frac{\sqrt{6}}{4}\sigma_m, \quad \lambda_1 \leq \Im z
\leq \lambda_n,
\]
and the $n - m$ remaining stable zeros lie all in the box given by
\[
 -\sqrt{3}\Delta < \Re z < 0, \quad \lambda_1 \leq \Im z \leq \lambda_n.
\]
In particular, no stable zero lies in the band $\Re z \in
(-\frac{\sqrt{6}}{4}\sigma_m,-\sqrt{3}\Delta]$.
\end{lemma}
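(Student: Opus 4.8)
The plan is to split the argument into two stages. In the first stage I would show, by a direct estimate of $\Re\bigl(B^*(zI-A)^{-2}B\bigr)$, that the horizontal strip $\la_1\le\Im z\le\la_n$ contains no zero of $\Phi$ whose real part lies in $\bigl(-\tfrac{\sqrt6}{4}\si_m,\,-\sqrt3\,\De\,\bigr]$; combined with Lemma~\ref{zero-location} and the already-recorded fact that every stable zero has $\Re z<0$, this produces the asserted gap and forces each of the $n$ stable zeros of $\Phi$ into one of the two boxes. In the second stage I would prove that the far box contains \emph{exactly} $m$ of them by a homotopy / argument-principle count, deforming $A$ to a scalar skew-Hermitian matrix for which the count is transparent.

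\emph{First stage.} Fix $z=x+iy$ with $\la_1\le y\le\la_n$ and $x\le-\sqrt3\,\De$. Then $|y-\la_j|\le\De\le|x|/\sqrt3$ for every $j$, whence $x^2-(y-\la_j)^2\ge\tfrac23x^2$ and $x^2+(y-\la_j)^2\le\tfrac43x^2$, so by \eqref{eq:h-computation} (which evaluates $\Re(z-i\la_j)^{-2}$),
\[
\Re\,\frac1{(z-i\la_j)^2}=\frac{x^2-(y-\la_j)^2}{\bigl(x^2+(y-\la_j)^2\bigr)^2}\ \ge\ \frac{3}{8x^2}\ >\ 0 .
\]
Since all these coefficients are positive and $\sum_j b_jb_j^*=B^*B\ge\si_m^2I$, taking real parts in \eqref{eq:phi-decomp} gives $\Re\bigl(B^*(zI-A)^{-2}B\bigr)\ge\tfrac{3}{8x^2}B^*B\ge\tfrac{3\si_m^2}{8x^2}I$. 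If $\Phi(z)=0$, choose $\xi\ne0$ with $B^*(zI-A)^{-2}B\,\xi=\xi$; pairing with $\xi$ and taking real parts gives $\tfrac{3\si_m^2}{8x^2}\|\xi\|^2\le\|\xi\|^2$, i.e.\ $|x|\ge\tfrac{\sqrt6}{4}\si_m$. The hypothesis $\sqrt3\,\De<\tfrac{\sqrt6}{4}\si_m$ makes the excluded strip nonempty, which completes this stage.

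\emph{Second stage.} Put $\mu=\tfrac12(\la_1+\la_n)$ and, for $s\in[0,1]$, let $A_s=sA+(1-s)i\mu I$ and $\Phi_s(z)=I-B^*(zI-A_s)^{-2}B$; each $A_s$ is skew-Hermitian with the same eigenvectors $v_j$ as $A$ and eigenvalues $i\mu_j(s)$, $\mu_j(s)=s\la_j+(1-s)\mu\in[\la_1,\la_n]$, and $\Phi_1=\Phi$. Fix the rectangle $\Omega=\{\,-\|B\|-1<\Re z<-\sqrt3\,\De,\ \la_1-1<\Im z<\la_n+1\,\}$. By Lemma~\ref{zero-location} applied to each $A_s$ (same imaginary range, same $\|B\|$), no zero of $\Phi_s$ lies on the left, top, or bottom edge of $\partial\Omega$; and since every zero of $\Phi_s$ has $\Im z\in[\la_1,\la_n]$, the estimate of the first stage applied to $A_s$ (on the right edge $|\Re z|=\sqrt3\,\De$, while $|\Im z-\mu_j(s)|\le\De$ once $\Im z\in[\la_1,\la_n]$) rules out zeros on the right edge too; moreover $\det\Phi_s$ has no poles in $\Omega$ since $\si(A_s)\subset i\BR$. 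Hence
\[
N(s)\defin\frac1{2\pi i}\oint_{\partial\Omega}\operatorname{trace}\bigl(\Phi_s(z)^{-1}\Phi_s'(z)\bigr)\,dz
\]
counts, with multiplicity, the zeros of $\det\Phi_s$ in $\Omega$, and since the integrand is jointly continuous in $(s,z)\in[0,1]\times\partial\Omega$, $N$ is a continuous $\mathbb Z$-valued function of $s$, hence constant. At $s=0$, $\det\Phi_0(z)=\prod_{k=1}^m\bigl(1-\si_k(B)^2(z-i\mu)^{-2}\bigr)$ vanishes exactly at $z=i\mu\pm\si_k(B)$, and since $\sqrt3\,\De<\si_m\le\si_k(B)\le\|B\|$ for all $k$, precisely the $m$ points $i\mu-\si_k(B)$ lie in $\Omega$; thus $N(0)=m$, so $N(1)=m$. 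Therefore $\Phi$ has exactly $m$ stable zeros in $\Omega$, and by the first stage these lie in the far box; since $\Phi$ has $n$ stable zeros altogether (they are the eigenvalues of $A_{\text{cl.loop}}$), the remaining $n-m$ lie off $\Omega$, hence in $(-\sqrt3\,\De,0)\times[\la_1,\la_n]$.

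The step I expect to be most delicate is the counting: one must keep $\partial\Omega$ zero-free \emph{uniformly} in $s$, and on the right edge this requires the quantitative bound $\Re\bigl(B^*(zI-A_s)^{-2}B\bigr)\ge\tfrac{3\si_m^2}{8x^2}I$ rather than mere positivity — this is exactly where the constants $\sqrt3$ and $\tfrac{\sqrt6}{4}$ get locked in. A second point worth stating explicitly is that the \emph{total} number of stable zeros of $\Phi_s$ drops from $n$ (for $s>0$) to $m$ (at $s=0$): when $m<n$ the pair $(i\mu I,B)$ is not stabilizable, so $n-m$ of the stable zeros of $\Phi_s$ collapse onto the pole of $\det\Phi_s$ at $i\mu$ as $s\to0$. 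By the strip estimate (with $\De$ replaced by $s\De$) these $n-m$ zeros stay in $(-\sqrt3\,s\De,0)\times[\la_1,\la_n]$ for every $s$, hence off $\Omega$, so this degeneration never affects $N(s)$ and the continuity argument is valid on all of $[0,1]$.
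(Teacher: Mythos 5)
Your proof is correct, and the counting step is a genuinely different argument from the one in the paper.

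The first stage is essentially identical to the paper's: both establish that on the vertical line $\Re z = c$ with $\sqrt3\,\De \le |c| < \tfrac{\sqrt6}{4}\si_m$ and $\Im z\in[\la_1,\la_n]$, the Hermitian matrix $\Re\bigl(B^*(zI-A)^{-2}B\bigr)$ is bounded below by $\tfrac{3}{8c^2}B^*B\ge\tfrac{3\si_m^2}{8c^2}I>I$, hence $\Re\Phi(z)<0$ and $\Phi(z)$ is invertible there. This is exactly where the constants $\sqrt3$ and $\tfrac{\sqrt6}{4}$ come from in both proofs.

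The second stage is where you diverge. The paper works with a single fixed rectangular contour $\Ga_c$ (with $c$ in the gap range and horizontal edges pushed far away), observes the sign patterns of $\Im\Phi$ on the lower and upper parts of the vertical edges and of $\Re\Phi$ on the middle part, tracks the eigenvalue curves $\phi_1,\dots,\phi_m$ of $\Phi$ around $\Ga_c$, concludes that $\det\Phi\circ\Ga_c$ has winding number $-2m$, and then applies the argument principle together with the count of $2n$ poles inside $\Ga_c$ to get $2n-2m$ zeros inside, hence $n-m$ stable zeros inside. You instead deform $A$ linearly to the scalar matrix $i\mu I$, keep the contour $\partial\Omega$ fixed, verify (using Lemma~\ref{zero-location} for $A_s$ and your uniform first-stage estimate) that $\det\Phi_s$ never vanishes on $\partial\Omega$, and read off the count $N(0)=m$ explicitly because $\Phi_0$ factors through the singular values of $B$. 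Your homotopy version trades the paper's somewhat delicate eigenvalue-tracking argument (which requires choosing $d$ large and arguing that each $\phi_j$ crosses the negative real axis exactly once per vertical edge) for a calculation that is transparent at the endpoint $s=0$; the price is that you must verify the lemmas and the gap estimate uniformly along the family $A_s$, which you do, correctly noting that $\si(A_s)\subset i[\la_1,\la_n]$ keeps all the bounds valid. Your closing remark about the collapse of $n-m$ stable zeros onto the pole at $i\mu$ as $s\to0$ is a good observation and explains why the homotopy is nevertheless benign on $\Omega$, though strictly speaking it is not needed once you know $\partial\Omega$ stays zero-free for all $s$.

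Both proofs implicitly use that $\Phi$ has exactly $n$ stable zeros (equivalently, that $\det\Phi$ has $2n$ poles counted with multiplicity), which rests on the spectral factorization $\Phi(z)=M(-\bar z)^*M(z)$ from Subsection~\ref{function-phi}; you state this explicitly, which is fine.
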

\begin{proof}
 The restriction to $\lambda_1 \leq \Im z \leq \lambda_n$ comes
 from Lemma
\ref{zero-location}. To prove the statement about boxes, suppose
that $\sigma_m$ satisfies the hypothesis given.

Let $\Gamma_c:[\al,\be]\to \BC$ be the closed positively oriented contour, traversing the
boundary of the box $[-c,c]\times[-d,d]$. Since $\Phi(\infty) = I$,
$d$ can be chosen large enough so that all the eigenvalues of $\Phi(z)$
are arbitrarily close to $1$ when $z$ is on the horizontal
segments of $\Gamma_c$. We assume that
$\Gamma_c(\al)=\Gamma_c(\be)=-c-id$.

Let $\gamma_c$ be the right vertical segment of $\Gamma_c$, going
from $c-id$ to $c+id$. We subdivide $\gamma_c$ into three segments,
\[
[c - id, c + i\la_1],
\quad
[c + i\la_1,c + i\la_n],
\quad
[c + i\la_n, c + id].
\]
We will use expressions \eqref{eq:re-im-phi} for
the real and imaginary parts of $\Phi(z)$.
First observe that if $z \in [c - id, c + i\la_1]$, then
 $\Im \Phi(z)< 0$. Indeed, for these $z$,
 $\Re z > 0$ and $\Im z < \lambda_j$ for all $j$. It follows that
$g(i\lambda_j; z) > 0$ and therefore $\Im \Phi(z) < 0$
(see \eqref{eq:g-computation}). Hence, all the eigenvalues
of $\Phi(z)$ lie in the open lower half-plane.

Similarly, if $z \in [c + i\la_n, c + id]$,
one has $\Im \Phi(z) > 0$. Hence for these $z$, all the
eigenvalues of $\Phi(z)$ lie in the upper half-plane.

Now we will show that if
\begin{equation}
 \label{eq:c-conditions}
 \sqrt{3}\Delta \le c < \frac{\sqrt{6}}{4} \sigma_m
\end{equation}
then $\Re \Phi(z) < 0$ for  $z \in [c + i\la_1,c + i\la_n]$.
Write $z = c + iy$, $y \in [\lambda_1,\lambda_n]$. Then, using
\eqref{eq:h-computation} and \eqref{eq:c-conditions},
we get that for all $j$,
\[
 h(i\lambda_j; z) = \frac{c^2 - (y - \lambda_j)^2}{(c^2 +
(y - \lambda_j)^2)^2} \geq \frac{c^2 - \Delta^2}{(c^2 + \Delta^2)^2} \geq
\frac{c^2 - c^2/3}{(c^2 + c^2/3)^2} =
\frac{3}{8c^2} > \frac{1}{\sigma_m^2}.
\]

If $\xi \in \BC^m$ with $\|\xi\|=1$, then
\[
 \left\langle \Re \Phi(z) \xi, \xi \right\rangle =
 1 - \big\langle h(A; z) B\xi,\, B\xi \big\rangle
\leq
1 - \big(\min_{\lambda\in\sigma(A)} h(\la; z) \big)\cdot
\|B\xi\|^2
< 1 - \frac{1}{\sigma_m^2}\|B \xi\|^2 \leq 0,
\]
because $\|B\xi\| \geq \sigma_m\|\xi\|$. Hence, $\Re \Phi(z) < 0$.

Since $\Phi(-\overline{z}) = \Phi(z)^*$, $\Phi(z)$ behaves similarly
on the left vertical segment of $\Gamma_c$.

Now choose $c$ satisfying \eqref{eq:c-conditions} and study the winding number
of $\det \Phi \circ \Ga_c$ around $0$. The $m$ eigenvalues of $\Phi(z)$,
$\phi_1(z),\ldots,\phi_m(z)$,
can be numbered so that $\phi_j\circ \Ga_c(t)$ are all
continuous functions of the parameter $t, t\in [\al,\be]$.
Since
$\det \Phi\circ \Gamma_c = (\phi_1\circ \Gamma_c)\cdot (\phi_2\circ \Gamma_c) \cdot \,\ldots\, \cdot (\phi_m\circ \Gamma_c)$,
it follows that
\begin{equation}
\label{det-Phi}
 \operatorname{index} (\det \Phi\circ \Gamma_c) = \sum_{j=1}^m \operatorname{index}(
\phi_j\circ \Gamma_c).
\end{equation}
Let us calculate the winding number of the curves $\phi_j \circ \Gamma_c$.

When $z=\Ga_c(t)$ is in the lower horizontal segment, $\phi_j$ are all close to $1$. Then,
as $z$ travels through $\gamma_c$, $\phi_j$ first are all in the lower half-plane,
then go to the left half-plane and then to the upper half-plane.
When $z$ is in the upper horizontal segment, all the
numbers $\phi_j$ are again close to $1$.
It follows that by choosing $d$ sufficiently large, we can make the winding number of each of the
functions $\phi_j$ to be arbitrarily close to $-1$ on each of two vertical segments of $\Gamma_c$ and
arbitrarily close to $0$ on the two horizontal parts of $\Gamma_c$. Since
$\det \Phi \circ \Gamma_c:[\al, \be]\to \BC$ is a closed curve, its
winding number around $0$ is
an integer. By \eqref{det-Phi}, it is equal to $-2m$.
Using the argument principle and
the fact that $\det \Phi(z)$ has $2n$ poles counting multiplicities inside
$\Gamma_c$, one gets that $\det \Phi(z)$ has $2n -
2m$ zeros inside $\Gamma_c$. Hence, $\Phi(z)$ has $n-m$ stable zeros inside
$\Gamma_c$.

Setting $c = \sqrt{3}\Delta$, we see that there are $n-m$ stable zeros inside
$(-\sqrt{3}\Delta,0)\times[\lambda_1,\lambda_n]$. Letting $c \to
\frac{\sqrt{6}}{4}\sigma_m$, we obtain that again
$\Phi$ has $n-m$ stable zeros inside the box
$(-\frac{\sqrt{6}}{4}\sigma_m, 0)\times[\lambda_1, \lambda_n]$. The remaining
$m$ stable zeros must lie all outside this box, and by Lemma \ref{zero-location},
they belong to the box
to $[-\|B\|,-\frac{\sqrt{6}}{4}\sigma_m]\times[\lambda_1,\lambda_n]$.
\end{proof}

\begin{proof}[Proof of Theorem \ref{thm:main-detailed}]
All the statements of this  theorem follow from Lemmas
\ref{zero-location}--\ref{gaps}.
\end{proof}

\begin{proof}[Proof of Theorem \ref{thm:main}]
As we already pointed out just after the statement of Theorem
\ref{thm:main-detailed}, Theorem \ref{thm:main} is its direct consequence.
\end{proof}

Using Theorem \ref{thm:main-detailed}, we can provide upper
bounds for the value of $\gadecay$.

\begin{corol}
\label{gamma-bound}
 The following upper bound always holds for the value of
$\gamma_{\mathrm{decay}}$,
\begin{equation}
 \gamma_{\mathrm{decay}}(A, B) \leq \|B\|.
\end{equation}

If in addition, $\sigma_m$, the smallest singular value of $B$,  satisfies
$\sigma_m > 2\sqrt{2}\Delta$ and $m < n$, then
\begin{equation}
 \gamma_{\mathrm{decay}}(A, B) < \sqrt{3}\Delta.
\end{equation}
If $m=1$, then $\gadecay \leq 2\sqrt{2}\, \Delta$ for any $B$ such that the pair
$(A,B)$ is controllable.
\end{corol}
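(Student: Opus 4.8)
The plan is to read all three inequalities off Theorem~\ref{thm:main-detailed}, with the last one needing only a short case split on top of the other two. First I would dispose of the bound $\gamma_{\mathrm{decay}}(A,B)\le\|B\|$: part~(1) of Theorem~\ref{thm:main-detailed} places every closed-loop eigenvalue $\nu$ in the strip $-\|B\|\le\Re\nu<0$, so $|\Re\nu|\le\|B\|$ for each $\nu\in\siclloop$, and minimizing over $\nu$ gives the claim. For the second bound, assume $\sigma_m>2\sqrt2\,\Delta$ and $m<n$; then part~(2) of Theorem~\ref{thm:main-detailed} applies and, in particular, among the $n-m\ge1$ eigenvalues it locates inside $(-\sqrt3\,\Delta,0)\times[\lambda_1,\lambda_n]$ there is at least one $\nu$ with $|\Re\nu|<\sqrt3\,\Delta$; hence $\gamma_{\mathrm{decay}}(A,B)<\sqrt3\,\Delta$.

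For the final statement, fix $m=1$ and a controllable pair $(A,B)$. Controllability with a single input forces the eigenvalues $i\lambda_j$ of $A$ to be pairwise distinct, so $n\ge2$ and $\Delta>0$ (the degenerate case $n=1$ falls outside the setting). Since $B$ is then a column vector, it has a single singular value, equal to $\|B\|$, so the hypothesis of the second bound reads $\|B\|>2\sqrt2\,\Delta$. Now I split into two cases. If $\|B\|\le2\sqrt2\,\Delta$, the first bound already yields $\gamma_{\mathrm{decay}}(A,B)\le\|B\|\le2\sqrt2\,\Delta$. If $\|B\|>2\sqrt2\,\Delta$, then $m=1<n$ and the second bound yields $\gamma_{\mathrm{decay}}(A,B)<\sqrt3\,\Delta<2\sqrt2\,\Delta$. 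In both cases $\gamma_{\mathrm{decay}}(A,B)\le2\sqrt2\,\Delta$, as desired.

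Since Theorem~\ref{thm:main-detailed} is already available, there is no real obstacle here; the only points requiring a moment's care are the identification of the lone singular value of a column vector $B$ with $\|B\|$, and checking that the borderline value $\|B\|=2\sqrt2\,\Delta$ causes no gap in the dichotomy. It does not: that value is covered by the first case, whereas the second case invokes only the strict inequality $\sigma_m>2\sqrt2\,\Delta$ that Theorem~\ref{thm:main-detailed}(2) demands.
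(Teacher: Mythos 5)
Your proposal is correct and follows essentially the same route as the paper: the first two bounds are read off directly from Parts~(1) and~(2) of Theorem~\ref{thm:main-detailed} (the paper cites Lemmas~\ref{zero-location} and~\ref{gaps}, which are the relevant ingredients of those parts), and the $m=1$ bound comes from observing $\sigma_m=\|B\|$ and splitting on whether $\|B\|$ exceeds $2\sqrt{2}\,\Delta$. You merely make explicit the case split and the implicit $n>1$ proviso that the paper leaves unstated.
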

\begin{proof}
 The first bound comes from lemma \ref{zero-location}. Under the conditions of
the second bound, using lemma \ref{gaps} it follows that $\Phi(z)$ has at
least one zero on $(-\sqrt{3}\Delta, 0)\times[\lambda_1,\lambda_n]$.
If $m=1$, then $\si_m=\|B\|$, which gives the last statement.
\end{proof}

\smallskip
\textit{Remark.}
Upper and lower bounds for $\gadecay$ given in Theorem 1 and in the above Corollary
fail for a general (not skew-Hermitian) $A$ with imaginary spectrum. Consider, for
instance, matrices
\[
A_2=i\,
\begin{pmatrix}
        -1 & t\\
       0 & 1
\end{pmatrix},
\qquad
A_3=i\,
\begin{pmatrix}
        -1 & t & 0 \\
        0 &  1 & 0 \\
        0 &  0 & 0
\end{pmatrix}
\]
(so that $\sigma(A_2)=\{-i, i\}$ and $\sigma(A_3)=\{-i, 0, i\}$
for all $t\in \mathbb{R}$). Put $B_2=(1, 1)^T$, $B_3=(1, 1, 1)^T$.
Then numerical simulation shows that for large positive $t$'s,
$\gadecay(A_2, B_2)$ is very large
(and does not satisfy $\gadecay\le \|B\|$) and
$\gadecay(A_3, B_3)$ is very close to zero
(and does not satisfy $\gadecay\ge \ellest^1$).
In fact, the simulation suggests that $\gadecay(A_2, B_2)\to +\infty $ and $\gadecay(A_3, B_3)\to 0 $
as $t\to +\infty $.

\medskip

As we already mentioned before,
Theorem \ref{thm:main} also implies lower estimates
for $\gamma_{\mathrm{decay}}$, namely
$\gamma_{\mathrm{decay}}> \ellest$ and
$\gamma_{\mathrm{decay}}> \ellest^1$ for $m=1$.

The following upper bound holds for $\ellest$:
\[
\ellest=\min_{1\leq k \leq n}
\frac{\|b_k\|}{\sqrt{2}\,(1 + 2\frac{\|B\|^2}{\delta_k^2})}
\leq
\frac{\sqrt{2}}{4}\sqrt{\frac{m}{n}}\frac{\delta^2}{\|B\|}.
\]
Indeed, the inequality
\[
\|b_k\| \leq \sqrt{\frac{m}{n}}\|B\|,
\]
(see the proof of Theorem \ref{limit-case}) implies that
we get
\[
\ellest \leq \frac{\sqrt{2}}{4} \min_k \frac{\|b_k\| \delta_k^2}{\|B\|^2} \leq
\frac{\sqrt{2}}{4}\sqrt{\frac{m}{n}}\frac{\delta^2}{\|B\|}.
\]
Similarly,
$\ellest^1\leq \de /(2\sqrt{n})$.
One can guess that a matrix $B$ in which all $\|b_k\|$ are as big as possible can be
used to ensure a nearly optimal stabilization of the system. A matrix
with these characteristics will be given below in Theorem \ref{limit-case}.

\section{The estimate of $\gadecay$ in the case of a sufficient
separation of the open-loop spectrum}
\label{sec:enough-sep}

Here we will assume that the minimal separation $\delta(A)$, defined in
\eqref{eq:delta}, is rather big
in comparison with $\|B\|$.
We will use the following analogue of Rouch\'e's theorem for
matrix-valued functions.

\begin{lemma}
Let $F(z)$ and $G(z)$ be meromorphic functions on some
open subset $D \subset \mathbb{C}$, whose values are $m\times m$ complex matrices. Let $\gamma$ be a closed curve
in $D$ such that $\det F(z)$ has no poles or zeros on $\ga$.
If $\|F^{-1}(z)G(z)\| < 1$ for all $z \in \gamma$, then the
scalar functions $\det F(z)$ and $\det (F(z) + G(z))$ have the same winding number along
$\gamma$.
\end{lemma}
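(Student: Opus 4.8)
The plan is to reduce the statement to the classical (scalar) argument principle applied to the holomorphic function $\det F(z)$ and $\det(F(z)+G(z))$, exploiting the factorization
\[
F(z) + G(z) = F(z)\bigl(I + F(z)^{-1}G(z)\bigr),
\]
which is valid precisely on the set where $\det F(z)\neq 0$, in particular on a neighborhood of $\gamma$ since by hypothesis $\det F$ has no zeros or poles on $\gamma$. Taking determinants gives
\[
\det\bigl(F(z)+G(z)\bigr) = \det F(z)\cdot \det\bigl(I + F(z)^{-1}G(z)\bigr).
\]
First I would observe that the winding number (index) of a product of two nonvanishing continuous functions along a closed curve is the sum of the winding numbers, so
\[
\operatorname{index}\bigl(\det(F+G)\circ\gamma\bigr)
= \operatorname{index}\bigl(\det F\circ\gamma\bigr)
+ \operatorname{index}\bigl(\det(I + F^{-1}G)\circ\gamma\bigr),
\]
provided each factor is a well-defined nonvanishing function on $\gamma$. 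Thus the whole lemma comes down to showing that $z\mapsto \det\bigl(I+F(z)^{-1}G(z)\bigr)$ is nonvanishing on $\gamma$ and has winding number $0$.

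Next I would use the hypothesis $\|F^{-1}(z)G(z)\| < 1$ for $z\in\gamma$. This immediately gives that $I + F(z)^{-1}G(z)$ is invertible for every $z\in\gamma$ (a matrix within distance $<1$ of the identity, in operator norm, is invertible), so $\det\bigl(I+F(z)^{-1}G(z)\bigr)\neq 0$ on $\gamma$; in particular $\det(F+G)$ has no zeros on $\gamma$ and the three indices above are all well-defined integers. Moreover, the condition $\|F^{-1}(z)G(z)\|<1$ says that the continuous matrix-valued curve $z\mapsto I + F(z)^{-1}G(z)$, $z\in\gamma$, takes values in the open ball of radius $1$ about $I$ in the space of $m\times m$ matrices with the operator norm. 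That ball is a convex (hence simply connected) subset of the group $GL_m(\mathbb{C})$, so the map $z\mapsto \det\bigl(I+F(z)^{-1}G(z)\bigr)$ is a continuous map from the closed curve $\gamma$ into $\mathbb{C}\setminus\{0\}$ that factors through a contractible set; concretely, the homotopy $(s,z)\mapsto \det\bigl(I + sF(z)^{-1}G(z)\bigr)$, $s\in[0,1]$, is a homotopy in $\mathbb{C}\setminus\{0\}$ from this curve to the constant $1$. Hence its winding number around $0$ is $0$, and combining with the displayed index identity yields $\operatorname{index}\bigl(\det(F+G)\circ\gamma\bigr) = \operatorname{index}\bigl(\det F\circ\gamma\bigr)$, which is the claim.

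The only mildly delicate point — the part I would be most careful about — is bookkeeping about poles: $F$ and $G$ are merely meromorphic, so $F^{-1}G$ could a priori have poles, and one must make sure the factorization and the norm hypothesis are only being used on $\gamma$ itself, where everything is finite and $\det F\neq 0$. The winding-number statement is purely about the boundary curve $\gamma$ and the continuous (in fact meromorphic, but here finite on $\gamma$) scalar functions $\det F\circ\gamma$ and $\det(F+G)\circ\gamma$, so no argument-principle counting of interior zeros/poles is needed for the lemma as stated; that counting is done later when the lemma is applied. I would therefore phrase the proof so that all manipulations take place in a thin open neighborhood of $\gamma$ on which $\det F$ is zero-free and pole-free, which exists by the hypothesis and compactness of $\gamma$.
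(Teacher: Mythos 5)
Your proof is correct, and it is the standard argument for the matrix (or operator-valued) Rouch\'e theorem: factor $F+G = F(I + F^{-1}G)$, pass to determinants, use multiplicativity of the winding number, and contract the curve $z\mapsto\det\bigl(I + F(z)^{-1}G(z)\bigr)$ to the constant $1$ via the homotopy $s\mapsto\det\bigl(I + sF(z)^{-1}G(z)\bigr)$, which stays nonvanishing because $\|sF^{-1}G\|<1$ on $\gamma$ for $s\in[0,1]$. The paper itself does not prove this lemma; it simply cites Gohberg--Sigal and Monden--Arimoto, so there is no in-paper proof to compare against, but your argument is precisely the one those references use, and your attention to where the factorization is legitimate (a neighborhood of $\gamma$ on which $F$, $G$, and $F^{-1}$ are all finite) is the right thing to be careful about given that $F$ and $G$ are only meromorphic.
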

This lemma is known. See, for instance, \cite[Theorem 2.2]{GohgSigal} (for
operator-valued functions)
or \cite{Monden-Arim}.

The next theorem locates the points of the closed-loop spectrum inside
disks of radii $r_k$ such that $r_k \to 0$ as $\frac{\|B\|^3}{\delta^2} \to
0$. Recall that the zeros of $\Phi$ which lie in the left
half-plane coincide with the eigenvalues of the closed-loop system (see
Subsection~\ref{function-phi}).

\begin{theor}
\label{zeros-disks}
 Suppose $k$ is an index such that $\frac{\|B\|^2}{\delta_k^2} <
\frac{\left(2-\sqrt{2}\right)^2}{2}$. Then $\Phi$ has exactly one zero in the
open disk of centre $z_k = -\|b_k\| + i\lambda_k$ and radius $r_k
= \frac{2}{\left(2-\sqrt{2}\right)^2}\frac{\|B\|^2}{\delta_k^2}\|b_k\|$.
\end{theor}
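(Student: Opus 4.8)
The plan is to apply the matrix-valued Rouch\'e theorem stated just above on the circle $\gamma=\partial D(z_k,r_k)$, comparing $\Phi$ with the rank-one perturbation of the identity that governs its behaviour near the pole $i\lambda_k$. Throughout put $t=\|b_k\|$, $w=z-i\lambda_k$, and $\mu=\|B\|^2/\delta_k^2$; we may assume $b_k\neq0$, since otherwise $r_k=0$ and the statement is vacuous. Because $2/(2-\sqrt2)^2=3+2\sqrt2$ and $(3+2\sqrt2)(3-2\sqrt2)=1$, the hypothesis $\mu<(2-\sqrt2)^2/2=3-2\sqrt2$ is precisely the statement that $r_k=\theta t$ with $\theta\defin(3+2\sqrt2)\mu\in(0,1)$; in particular $r_k<t\le\|B\|<\delta_k$. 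Two elementary consequences will be used freely: $\overline{D(z_k,r_k)}$ lies in the open left half-plane (its rightmost point $z_k+r_k$ has real part $-t(1-\theta)<0$), so a zero of $\Phi$ there is automatically stable; and $\overline{D(z_k,r_k)}$ contains no pole of $\Phi$, since $|z_k-i\lambda_k|=t>r_k$ and $|z_k-i\lambda_j|=\sqrt{t^2+(\lambda_j-\lambda_k)^2}\ge\delta_k>r_k$ for $j\neq k$, so that $\Phi$ is holomorphic near $\overline{D(z_k,r_k)}$.

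First I would split $\Phi=F+G$, where by \eqref{eq:phi-decomp}
\[
F(z)=I-\frac1{w^2}\,b_kb_k^*,\qquad G(z)=-\sum_{j\neq k}\frac1{(z-i\lambda_j)^2}\,b_jb_j^*.
\]
Since $F$ is a rank-one perturbation of $I$, $\det F(z)=1-t^2/w^2=(w^2-t^2)/w^2$; its only zero in $\overline{D(z_k,r_k)}$ is the simple zero at the centre $z_k$ (the other, $i\lambda_k+t$, lies at distance $2t>r_k$) and it has no pole there, so $\det F$ has no zeros or poles on $\gamma$ and winding number $1$ around $0$ along $\gamma$. By Sherman--Morrison, $F(z)^{-1}=I+(w^2-t^2)^{-1}b_kb_k^*$, hence $\|F(z)^{-1}\|=\max\bigl(1,\,|w|^2/|w^2-t^2|\bigr)$.

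The heart of the proof is to show $\|F(z)^{-1}G(z)\|<1$ on $\gamma$, which I would get by bounding the two factors separately. On $\gamma$ one has the exact identity $|w+t|=r_k$ and the bound $|\Im w|=|\Im(z-z_k)|\le r_k$. Writing $|w^2-t^2|=|w-t|\,r_k$, a short computation shows that on $\gamma$ the quantity $|w|^2/\bigl(|w-t|\,r_k\bigr)$ is maximised at the leftmost point $w=-(t+r_k)$, giving $\|F(z)^{-1}\|\le (t+r_k)^2/\bigl((2t+r_k)r_k\bigr)=(1+\theta)^2/\bigl(\theta(2+\theta)\bigr)$. For $G$, writing $G(z)=-B^*\bigl(\sum_{j\neq k}(z-i\lambda_j)^{-2}v_jv_j^*\bigr)B$ and using the orthonormality of $\{v_j\}$ gives $\|G(z)\|\le\|B\|^2\max_{j\neq k}|z-i\lambda_j|^{-2}$, and from $|z-i\lambda_j|\ge|\Im z-\lambda_j|\ge\delta_k-|\Im w|\ge\delta_k-r_k>\delta_k(1-\sqrt\mu)>0$ we get $\|G(z)\|<\mu/(1-\sqrt\mu)^2$. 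Multiplying, using $\mu/\theta=3-2\sqrt2$ and $(1-\sqrt\mu)^2>(2-\sqrt2)^2=2(3-2\sqrt2)$ (valid since $\mu<3-2\sqrt2$), one obtains
\[
\|F(z)^{-1}G(z)\|<\frac{(1+\theta)^2}{\theta(2+\theta)}\cdot\frac{\mu}{(1-\sqrt\mu)^2}<\frac{(1+\theta)^2}{2(2+\theta)}\le\frac23<1,
\]
the last step because $(1+\theta)^2/(2+\theta)$ is increasing on $[0,1]$.

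Finally, the Rouch\'e lemma then forces $\det\Phi=\det(F+G)$ to have winding number $1$ around $0$ along $\gamma$ as well, and since $\Phi$ has no poles inside $D(z_k,r_k)$, the argument principle gives exactly one zero of $\Phi$ (counted with multiplicity, hence simple) in $D(z_k,r_k)$, which is the assertion. The step I expect to be the main obstacle is the bound on $\|F(z)^{-1}\|$: when $\mu$ approaches $3-2\sqrt2$ the disk $D(z_k,r_k)$ almost touches the pole $i\lambda_k$, and the naive estimate $|w-t|\ge 2t-r_k$ is too weak — one really needs that $|w|^2/|w^2-t^2|$ attains its maximum on $\gamma$ at the leftmost point, improving the denominator to $2t+r_k$, which is exactly what brings the constant below $1$.
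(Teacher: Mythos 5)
Your proof is correct and follows essentially the same route as the paper: the identical splitting $\Phi = F + G$ with $F(z) = I - (z - i\lambda_k)^{-2}b_kb_k^*$ and the matrix-valued Rouch\'e lemma on the circle $\partial D(z_k, r_k)$. The only difference is in balancing the two factors: the paper observes that $|w - t| \ge |w|$ (with $w = z - i\lambda_k$, $t = \|b_k\|$) to get $\|F(z)^{-1}\| \le 2t/r_k$, and bounds $\|G\|$ strictly below $r_k/(2t)$ using $|z - i\lambda_j| > (2-\sqrt2)\delta_k$ — the radius $r_k$ is defined precisely so that these two bounds cancel to give $\|F^{-1}G\| < 1$ — whereas you extract a slightly sharper $\|F^{-1}\| \le (1+\theta)^2/(\theta(2+\theta))$ by optimizing over the circle, ending with the stronger numerical bound $\|F^{-1}G\| < 2/3$.
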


\begin{proof}
Observe that $r_k < \|b_k\|$ and consider the contour
\[
 \gamma = \{ z_k + r_k e^{i\theta} : 0 \leq \theta \leq 2\pi\}
\]
and the functions
\[
 F(z) = I - \frac{1}{(z-i\lambda_k)^2}b_kb_k^*
\]
\[
 G(z) = \Phi(z) - F(z).
\]
These functions are holomorphic on $\gamma$ and its interior. We will
prove that $\|G(z)\| < \|F(z)^{-1}\|^{-1}$ for $z \in \gamma$, so
that we can use the above version of Rouch\'e's theorem.

First observe that $F(z)$ is normal, so that $\|F(z)^{-1}\|^{-1} =
\min_{\lambda\in\sigma(F(z))} |\lambda|$. The spectrum of $F(z)$ can be
computed easily:
\[
 \sigma(F(z)) = \left\{1, 1 - \frac{\|b_k\|^2}{(z-i\lambda_k)^2}\right\}.
\]

Take any $z$ such that $|z-z_k|=r_k$ and put
$z = z_k + r_k e^{i\theta}$.
Notice that $r_k < \|b_k\|$ implies
$\big|r_k e^{i\theta} - 2\|b_k\|\big| \geq \big| \|b_k\| - r_k e^{i\theta} \big|$.
We get
\[
\begin{split}
\left|1 - \frac{\|b_k\|^2}{(z-i\lambda_k)^2}\right|         &=
\bigg|1 - \frac{\|b_k\|^2}{(\|b_k\| - r_k e^{i\theta})^2}\bigg| =
 \left|\frac{r_k^2 e^{2i\theta} - 2r_k \|b_k\| e^{i\theta}}{(\|b_k\| - r_k
e^{i\theta})^2}\right|
\\
&= \frac{r_k \big|r_k e^{i\theta} - 2\|b_k\|\big|}
{\big|\|b_k\| - r_k e^{i\theta}\big|^2}
 \geq \frac{r_k}{\||b_k\| - r_k e^{i\theta}|} \geq \frac{r_k}{\|b_k\| + r_k}
\geq
\frac{r_k}{2\|b_k\|},
\end{split}
\]
and so $\|F(z)^{-1}\|^{-1} \geq \frac{r_k}{2\|b_k\|}$.
Hence it will suffice to prove that $\|G(z)\| < \frac{r_k}{2\|b_k\|}$.
By \eqref{eq:phi-decomp},
\[
 G(z) = -B^*\bigg(\sum_{j\neq k} \frac{1}{(z-i\lambda_j)^2}v_jv_j^*\bigg) B.
\]
Then, observe that the condition $\frac{\|B\|^2}{\delta_k^2} <
\frac{\left(2-\sqrt{2}\right)^2}{2}$ implies $r_k < \left(\sqrt{2} -
1\right)\delta_k$ since
\[
 r_k < \|b_k\| \leq \|B\| < \frac{2 - \sqrt{2}}{\sqrt{2}}\delta_k =
\big(\sqrt{2} - 1\big) \delta_k.
\]
Now we have for $z \in \gamma$ and for all $j \neq k$
\begin{multline*}
|z - i\lambda_j| = \big|r_k e^{i\theta} - \|b_k\| + i\lambda_k - i\lambda_j\big|
\geq
\big|i\lambda_k - i\lambda_j - \|b_k\|\big| - r_k \\
\geq
|\lambda_k - \lambda_j| - r_k
 > \de_k-(\sqrt{2}-1)\de_k=\big(2 - \sqrt{2}\big)\de_k.
\end{multline*}

Hence it follows that
\[
 \|G(z)\|
 < \frac{\|B\|^2}{\left(2-\sqrt{2}\right)^2\delta_k^2}
= \frac{r_k}{2\|b_k\|}
\le \|F(z)^{-1}\|^{-1}.
\]
So, by Rouch\'e's theorem, $\det F(z)$ and $\det (F(z) +
G(z))$ have the same number of
zeros inside $\gamma$. The only zeros of $\det F(z)$ are $z_k$ and
$-\bar z_k$.
Since $\gamma$ lies completely in the left half-plane,
$\det F(z)$ has exactly one zero inside $\gamma$.
Therefore
$\det \Phi(z)=\det (F(z) + G(z))$ has exactly one zero inside $\ga$.
\end{proof}

\begin{corol}
\label{corol-phiks}
Set
\begin{equation}
\label{eq:defin-phik}
\varphi_k \defin \frac{2\|B\|^2}{(2-\sqrt{2})^2\delta_k^2}, \qquad 1\le k \le n.
\end{equation}
Suppose $\varphi_k < 1$ for at least one index $k$.
Put
\[
\Gamma_+ = \min \{(1+\varphi_k)\|b_k\| : \quad \varphi_k < 1\}, \qquad
\Gamma_- = \min_k (1-\varphi_k)\|b_k\|.
\]
Then $\gadecay < \Gamma_+$.
If moreover $\varphi_k < 1$ for all $k$, then
\[
\Gamma_- < \gadecay < \Gamma_+.
\]
\end{corol}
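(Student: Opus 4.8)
The plan is to deduce Corollary~\ref{corol-phiks} directly from Theorem~\ref{zeros-disks}, which locates each closed-loop eigenvalue (i.e. each stable zero of $\Phi$) associated to an index $k$ with $\varphi_k<1$ inside the open disk of centre $z_k=-\|b_k\|+i\la_k$ and radius $r_k=\frac{2}{(2-\sqrt 2)^2}\frac{\|B\|^2}{\delta_k^2}\|b_k\|=\varphi_k\|b_k\|$. First I would observe that for such a $k$, the real part of any point $z$ in that disk satisfies $-\|b_k\|-r_k<\Re z<-\|b_k\|+r_k$, i.e.
\[
(1-\varphi_k)\|b_k\| < |\Re z| < (1+\varphi_k)\|b_k\|.
\]
Thus each disk contributes a closed-loop eigenvalue whose distance to the imaginary axis is squeezed between $(1-\varphi_k)\|b_k\|$ and $(1+\varphi_k)\|b_k\|$.

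For the upper bound $\gadecay<\Gamma_+$: pick an index $k$ realizing the minimum in the definition of $\Gamma_+$, so $\varphi_k<1$ and $(1+\varphi_k)\|b_k\|=\Gamma_+$. By Theorem~\ref{zeros-disks} there is a stable zero $\nu$ of $\Phi$ in the disk $D(z_k,r_k)$; this $\nu$ is a closed-loop eigenvalue, and $|\Re\nu|<\|b_k\|+r_k=(1+\varphi_k)\|b_k\|=\Gamma_+$. Since $\gadecay=\min\{|\Re\nu|:\nu\in\siclloop\}$, we get $\gadecay\le|\Re\nu|<\Gamma_+$.

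For the lower bound, assume now $\varphi_k<1$ for \emph{all} $k$. Then Theorem~\ref{zeros-disks} yields, for each $k$, exactly one zero of $\Phi$ in $D(z_k,r_k)$; since these $n$ disks are pairwise disjoint (they are centred at distinct points $i\la_k$ of the imaginary axis with radii $r_k<\|b_k\|\le\|B\|<\delta_k/\sqrt2$, hence $r_j+r_k<\delta/\sqrt2<|\la_j-\la_k|$ — here I would spell out the disjointness estimate carefully) and $\Phi$ has exactly $n$ stable zeros counted with multiplicity (by Lemma~\ref{gaps} with, say, $m=0$ in the degenerate reading, or more safely by the winding-number count that $\det\Phi$ has $2n$ poles and, by Lemmas~\ref{zero-location}--\ref{triangles}, all its $2n$ zeros split as $n$ stable and $n$ anti-stable), it follows that \emph{every} closed-loop eigenvalue lies in some $D(z_k,r_k)$. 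Hence for every $\nu\in\siclloop$ there is $k$ with $|\Re\nu|>\|b_k\|-r_k=(1-\varphi_k)\|b_k\|\ge\Gamma_-$, so $\gadecay>\Gamma_-$. Combining the two parts gives $\Gamma_-<\gadecay<\Gamma_+$.

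The main obstacle I anticipate is justifying that the $n$ disks $D(z_k,r_k)$ are disjoint \emph{and} account for \emph{all} stable zeros of $\Phi$ — Theorem~\ref{zeros-disks} by itself only says each disk contains exactly one zero, and one must rule out stable zeros lying outside every disk. This requires combining the disjointness estimate above with the global count of stable zeros ($n$ of them, with multiplicity), which follows from the factorization $\Phi(z)=M(-\bar z)^*M(z)$ together with the argument-principle reasoning already used in the proof of Lemma~\ref{gaps}. Everything else is an elementary triangle-inequality bookkeeping of real parts.
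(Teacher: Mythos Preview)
Your approach is correct and is essentially the paper's own argument, only spelled out in full: both apply Theorem~\ref{zeros-disks} to bound the real parts of the closed-loop eigenvalues, and the paper's two-line proof simply \emph{asserts} that under the hypothesis $\varphi_k<1$ for all $k$ every closed-loop eigenvalue lies in some disk $D(z_k,r_k)$, leaving the disjointness and counting to the reader. You are right to flag that gap and to fill it.

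Two small slips to fix. First, the centres $z_k=-\|b_k\|+i\la_k$ are \emph{not} on the imaginary axis; what matters is that their imaginary parts are the distinct $\la_k$, so $|z_j-z_k|\ge|\la_j-\la_k|\ge\delta$. Second, your disjointness estimate as written is off: from $r_k<\|B\|$ one gets $r_j+r_k<2\|B\|$, and then $\varphi_k<1$ for all $k$ gives $\|B\|<\tfrac{2-\sqrt2}{\sqrt2}\,\delta=(\sqrt2-1)\delta$, hence
\[
r_j+r_k<2(\sqrt2-1)\,\delta<\delta\le|\la_j-\la_k|\le|z_j-z_k|,
\]
which is what you need (your intermediate claim $r_j+r_k<\delta/\sqrt2$ does not follow). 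Finally, for the global count you need nothing as elaborate as Lemma~\ref{gaps}: the stable zeros of $\Phi$ are exactly the eigenvalues of the $n\times n$ matrix $A_{\text{cl.loop}}$, so there are precisely $n$ of them with multiplicity, and the $n$ disjoint disks with one zero each therefore account for all of them.
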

\begin{proof}
If $\varphi_k < 1$ for some $k$, the preceding theorem shows that some
eigenvalue $\nu$ of
the closed-loop system satisfies $-\Re \nu < (1+\varphi_k)\|b_k\|$,
and the upper bound follows. If $\varphi_k < 1$ for all $k$ and $\nu$ is any
eigenvalue
of the closed-loop system, then $-\Re \nu > (1-\varphi_k)\|b_k\|$ for some
$k$, so that the lower bound follows.
\end{proof}

Using Theorem \ref{zeros-disks},
when $\delta(A)$ is sufficiently large, we can give a matrix $B$, in a sense close to optimal.

\begin{theor}
\label{limit-case} Suppose $m\le n$.
 Let $w$ be the primitive $n$-th root of $1$ given by
\[
 w = e^{-i\frac{2\pi}{n}}.
\]
Let $\beta > 0$.

Let the matrix $\widehat{B}$ be represented in the orthonormal basis given by
$\{v_j\}$, the eigenvectors of $A$, as
\begin{equation}
\label{eq:optimal-B}
 \widehat{B} = \frac{\beta}{\sqrt{n}}\left[\begin{array}{cccc}
                  w^{0\cdot0} & w^{0\cdot 1} & \cdots & w^{0\cdot(m-1)} \\
          w^{1\cdot0} & w^{1\cdot1} & \cdots & w^{1\cdot(m-1)} \\
          \vdots & \vdots & \ddots & \vdots \\
          w^{(n-1)\cdot0} & w^{(n-1)\cdot1} & \cdots &
w^{(n-1)\cdot(m-1)} \\
                 \end{array}\right].
\end{equation}

 Then, $\|\widehat{B}\| = \beta$ and for any $\varepsilon > 0$, there exists $K > 0$
such that if $\delta(A) > K$,
then
\[
 \big(\sup_{\|B\|=\beta} \gamma_{\mathrm{decay}}(A,B) \big) - \gamma_{\mathrm{decay}}(A,
\widehat{B}) < \varepsilon.
\]
\end{theor}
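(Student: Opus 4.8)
The plan is to combine the two-sided estimate of Corollary~\ref{corol-phiks} with the observation that $\widehat B$ equalizes all the norms $\|b_k\|$ at the largest value that $\min_k\|b_k\|$ can attain under the constraint $\|B\|=\beta$. I would start from the elementary facts about $\widehat B$. Writing $\widehat B=\frac{\beta}{\sqrt n}V$, where $V$ is the $n\times m$ matrix with entries $w^{(j-1)(l-1)}$, the orthogonality relations for the characters $j\mapsto w^{jl}$ give $V^*V=nI_m$, hence $\widehat B^*\widehat B=\beta^2 I_m$ and $\|\widehat B\|=\beta$. Since in \eqref{eq:optimal-B} the matrix $\widehat B$ is written in the basis $\{v_j\}$, the vector $b_k=\widehat B^*v_k$ is the conjugate of the $k$-th row of $\widehat B$, so $\|b_k\|=\beta\sqrt{m/n}$ for every $k$. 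On the other hand, for \emph{any} $n\times m$ matrix $B$ with $m\le n$ one has $\sum_{k=1}^n\|b_k\|^2=\|B\|_F^2\le m\|B\|^2$, because $B$ has rank at most $m$; hence $\min_k\|b_k\|\le\sqrt{m/n}\,\|B\|$ (the inequality already invoked in Section~\ref{mainresult}). Thus, among all $B$ with $\|B\|=\beta$, the matrix $\widehat B$ attains the maximal possible value $\beta\sqrt{m/n}$ of $\min_k\|b_k\|$.

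Next I would quantify how close $\gadecay(A,B)$ is to $\min_k\|b_k\|$ when $\delta(A)$ is large. Put $\varphi^{*}=\varphi^{*}(A)=\frac{2\beta^2}{(2-\sqrt2)^2\delta(A)^2}$; since $\delta(A)=\min_k\delta_k$, this equals $\max_k\varphi_k$, with $\varphi_k$ as in \eqref{eq:defin-phik}, and it depends on $B$ only through $\|B\|=\beta$. If $\delta(A)>(\sqrt2+1)\beta$, then $\varphi^{*}<1$, so $\varphi_k<1$ for all $k$, and Corollary~\ref{corol-phiks} applies to every $B$ with $\|B\|=\beta$: $\Gamma_-(B)<\gadecay(A,B)<\Gamma_+(B)$. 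For the upper side, $\Gamma_+(B)=\min_k(1+\varphi_k)\|b_k\|\le(1+\varphi^{*})\min_k\|b_k\|\le(1+\varphi^{*})\beta\sqrt{m/n}$, and since this bound is uniform in $B$, $\sup_{\|B\|=\beta}\gadecay(A,B)\le(1+\varphi^{*})\beta\sqrt{m/n}$. For the lower side applied to $\widehat B$, all $\|b_k\|$ equal $\beta\sqrt{m/n}$, so $\Gamma_-(\widehat B)=\beta\sqrt{m/n}\cdot\min_k(1-\varphi_k)=\beta\sqrt{m/n}\,(1-\varphi^{*})$, giving $\gadecay(A,\widehat B)>\beta\sqrt{m/n}\,(1-\varphi^{*})$.

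Subtracting the two estimates gives $\big(\sup_{\|B\|=\beta}\gadecay(A,B)\big)-\gadecay(A,\widehat B)\le 2\varphi^{*}\beta\sqrt{m/n}=\frac{4\beta^3\sqrt{m/n}}{(2-\sqrt2)^2\delta(A)^2}$, which tends to $0$ as $\delta(A)\to\infty$; the difference is nonnegative because $\widehat B$ itself is admissible. Given $\varepsilon>0$, it then suffices to take $K$ larger than $(\sqrt2+1)\beta$ and larger than $\big(\frac{4\beta^3\sqrt{m/n}}{(2-\sqrt2)^2\varepsilon}\big)^{1/2}$, so that $\delta(A)>K$ forces the displayed quantity below $\varepsilon$. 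The one genuinely substantive point is the pair of facts that $\|b_k\|\equiv\beta\sqrt{m/n}$ for $\widehat B$ while $\min_k\|b_k\|\le\sqrt{m/n}\,\|B\|$ in general — i.e.\ recognizing that the columns of the discrete Fourier matrix distribute the total ``controllability mass'' $\sum_k\|b_k\|^2$ uniformly over the eigenvectors of $A$ and saturate the norm constraint; everything else is bookkeeping around Corollary~\ref{corol-phiks}.
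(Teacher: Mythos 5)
Your proof is correct and takes essentially the same route as the paper's: both rest on the facts that $\widehat B$ equalizes $\|b_k\|$ at the maximal value $\beta\sqrt{m/n}$ of $\min_k\|b_k\|$ and that, for $\delta(A)$ large, the closed-loop eigenvalues cluster near $-\|b_k\|+i\lambda_k$. The only cosmetic difference is that you invoke the packaged Corollary~\ref{corol-phiks}, whereas the paper applies Theorem~\ref{zeros-disks} directly to the disks of radii $r_k\le\varepsilon/2$; the explicit thresholds $K$ you derive are the same up to the harmless extra factor $\sqrt{m/n}$.
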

\begin{proof}
First observe that $\widehat{B}$ is related to the unitary Discrete Fourier
Transform. If $U \in \mathbb{C}^{n \times n}$ is the matrix of the unitary DFT,
then
\[
 \widehat{B} = \beta U \left[\begin{array}{c} I_{m\times m} \\ \hline 0_{n-m \times
m} \end{array}\right].
\]
It follows that $\|\widehat B\| = \beta$. Define
$\widehat b_j$
in the same way as in
as in \eqref{eq:def-bj},
that is, put $\widehat b_j=\widehat B^* v_j$.
Then $\|\widehat b_j\| =
\beta\sqrt{\vphantom{|}\frac{m}{n}}$ for all $j = 1,\ldots,n$.

Let $B$ be arbitrary with $\|B\|=\beta$.
Let $\varepsilon>0$ be given. Define $K>0$ from
\[
 K^2 = \max\left\{\frac{2}{(2 - \sqrt{2})^2}\,\beta^2, \frac{4}{(2 -
\sqrt{2})^2}\,
\frac{\beta^3}{\varepsilon}\right\}.
\]
Suppose $\delta^2 > K^2$. Then $\frac{\beta^2}{\delta^2} <
\frac{(2-\sqrt{2})^2}{2}$, and the hypothesis of
Theorem \ref{zeros-disks} is satisfied for
any index $k$. We obtain disks of radii $r_k$ such that
the zeros of $\Phi$ lie inside this disks. Now,
\[
r_k = \frac{2}{(2-\sqrt{2})^2} \frac{\beta^2}{\delta_k^2}\|b_k\|
\leq
\frac{2}{(2-\sqrt{2})^2} \frac{\beta^3}{\delta^2} < \frac{2}{(2-\sqrt{2})^2}
\frac{\beta^3}{K^2} \leq \frac{\varepsilon}{2}.
\]
Since there is a zero of $\Phi$ in each of these disks of
centre $z_k = -\|b_k\| + i\lambda_k$, we have
\[
 \min \|b_k\| - \frac{\varepsilon}{2} <
\gadecay(A,B) <
\min \|b_k\| +
\frac{\varepsilon}{2}.
\]

Notice that
\[
 \sum_{j=1}^n \|b_j\|^2 = \|B\|_F^2 \leq m\|B\|^2 = m\beta^2.
\]
Hence, $\min \|b_j\|^2 \leq \beta^2\frac{m}{n}$.
Therefore
\[
 \sup_{\|B\| = \beta} \gamma_{\mathrm{decay}}(A,B) \leq \beta\sqrt{\frac{m}{n}}
+
\frac{\varepsilon}{2}.
\]

 Since $\widehat{B}$ has $\|\hat{b_j}\| = \beta\sqrt{ \frac{\vphantom t m}{n}}$ for all $j$,
\[
 \gamma_{\mathrm{decay}}(A, \widehat{B}) > \beta\sqrt{\frac{m}{n}} -
\frac{\varepsilon}{2}
\]
and the theorem follows.
\end{proof}

\begin{corol}
\label{limit-corol}
Assume that $m\le n$.  Let $\beta > 0$. If $A$ is such that
\begin{equation}
 \delta^2(A) > \frac{6}{(2-\sqrt{2})^2}\,\sqrt{\frac{n}{m}}\,\beta^2,
\end{equation}
then the matrix $\widehat{B}$ given in \eqref{eq:optimal-B} satisfies
\begin{equation}
 \gamma_{\mathrm{decay}}(A, \widehat{B}) > \frac{1}{2} \sup_{\|B\| = \beta}
\gamma_{\mathrm{decay}}(A, B).
\end{equation}
\end{corol}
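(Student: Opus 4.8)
The plan is to combine Theorem~\ref{zeros-disks} with the elementary estimates already used in the proof of Theorem~\ref{limit-case}, and then reduce the assertion to a one-variable inequality. First I would observe that the hypothesis forces
$\dfrac{\beta^2}{\delta_k^2}\le\dfrac{\beta^2}{\delta^2(A)}<\dfrac{(2-\sqrt2)^2}{2}$
for every $k$ (since $\sqrt{n/m}\ge 1$ and $3>1$), so that Theorem~\ref{zeros-disks} applies to every index $k$ --- and it applies with the same constant both for $\widehat B$ and for any other $B$ with $\|B\|=\beta$, because its hypothesis depends on $B$ only through $\|B\|$.

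Next, for an arbitrary $B$ with $\|B\|=\beta$, Theorem~\ref{zeros-disks} places each closed-loop eigenvalue of $(A,B)$ inside a disk $D\big(-\|b_k\|+i\lambda_k,\ r_k\big)$ with $r_k=\dfrac{2}{(2-\sqrt2)^2}\dfrac{\beta^2}{\delta_k^2}\|b_k\|\le E$, where $E\defin\dfrac{2\beta^3}{(2-\sqrt2)^2\,\delta^2(A)}$. Choosing an index $k_0$ with $\|b_{k_0}\|=\min_k\|b_k\|$, one of these disks lies in $\BC_-$ and contains a closed-loop eigenvalue, so $\gadecay(A,B)<\|b_{k_0}\|+E$. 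Since $\sum_k\|b_k\|^2=\|B\|_F^2\le m\beta^2$, we have $\min_k\|b_k\|\le\beta\sqrt{m/n}$, whence
\[
\sup_{\|B\|=\beta}\gadecay(A,B)\le\beta\sqrt{\tfrac{m}{n}}+E .
\]
For $\widehat B$, all $\|\widehat b_k\|=\beta\sqrt{m/n}$ (as computed in the proof of Theorem~\ref{limit-case}), so the corresponding radii satisfy $\widehat r_k\le E$; moreover the $n$ disks $D\big(-\beta\sqrt{m/n}+i\lambda_k,\ \widehat r_k\big)$ are pairwise disjoint, because $\widehat r_j+\widehat r_k<2\beta<\delta(A)\le|\lambda_j-\lambda_k|$ (the hypothesis gives in particular $\delta^2(A)>4\beta^2$). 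As $\Phi$ has exactly $n$ stable zeros and each of these $n$ disjoint disks, all contained in $\BC_-$, holds one of them, the disks account for all $n$ closed-loop eigenvalues, and therefore $\gadecay(A,\widehat B)\ge\beta\sqrt{m/n}-E$.

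Finally, the desired inequality $\gadecay(A,\widehat B)>\tfrac12\sup_{\|B\|=\beta}\gadecay(A,B)$ follows from $\beta\sqrt{m/n}-E>\tfrac12\big(\beta\sqrt{m/n}+E\big)$, i.e.\ from $\beta\sqrt{m/n}>3E=\dfrac{6\beta^3}{(2-\sqrt2)^2\,\delta^2(A)}$, which rearranges to exactly the stated hypothesis $\delta^2(A)>\dfrac{6}{(2-\sqrt2)^2}\sqrt{n/m}\,\beta^2$. The one point that needs genuine care is the lower bound for $\gadecay(A,\widehat B)$: Theorem~\ref{zeros-disks} only guarantees one zero of $\Phi$ per disk, so to be certain that no closed-loop eigenvalue escapes the union of the disks one must invoke the disjointness together with the count of exactly $n$ stable zeros of $\Phi$; everything else is routine arithmetic with the bounds from Theorems~\ref{zeros-disks} and~\ref{limit-case}.
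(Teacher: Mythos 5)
Your proof is correct and follows essentially the same route as the paper's, which simply plugs $\varepsilon=\tfrac{2}{3}\beta\sqrt{m/n}$ into the proof of Theorem~\ref{limit-case}; you unroll that same calculation, producing the bounds $\sup_{\|B\|=\beta}\gadecay(A,B)\le\beta\sqrt{m/n}+E$ and $\gadecay(A,\widehat B)\ge\beta\sqrt{m/n}-E$ with $E=2\beta^3/\bigl((2-\sqrt2)^2\delta^2(A)\bigr)$, and observe that the stated hypothesis is precisely $\beta\sqrt{m/n}>3E$. Your explicit disjointness argument for the $n$ disks around $-\beta\sqrt{m/n}+i\lambda_k$ (ensuring that they capture all $n$ closed-loop eigenvalues, so that the lower bound is legitimate) correctly spells out a point that the paper's proof of Theorem~\ref{limit-case} leaves implicit.
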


\begin{proof}
By applying Theorem \ref{limit-case}
and its proof
with $\varepsilon =
\frac{2}{3}\beta\sqrt{\frac{m}{n}}$, one gets
\[
 K^2 = \frac{6}{(2-\sqrt{2})^2}\,\sqrt{\frac{n}{m}}\beta^2.
\]

Therefore, if $\delta^2(A) > K^2$, then
\[
 \gamma_{\mathrm{decay}}(A, \widehat{B}) > \beta\sqrt{\frac{m}{n}} -
\frac{\varepsilon}{2}
= \frac{2}{3}\beta\sqrt{\frac{m}{n}} =
\frac{1}{2}\left(\beta\sqrt{\frac{m}{n}} + \frac{\varepsilon}{2}\right) \geq
\frac{1}{2} \sup_{\|B\| = \beta} \gamma_{\mathrm{decay}}(A,B).\qedhere
\]
\end{proof}

\section{Estimates of decay rate in terms of $\Delta_m$ for $m>1$}
\label{sec:delta-m}

We begin with the following remark.
Let $\tht > 0$. Consider the following function
$$
f(\si)=\frac {\si-\tht}{\si+\si^2+\dots+\si^{m+1}},
$$
which is positive on $(\tht, +\infty)$ and
vanishes at $\tht$ and at $+\infty$. We denote by
$\si_0(\tht)$ the point in $(\tht, +\infty)$ where $f$ takes its maximal value
and by $\mu(\tht)=f\big(\si_0(\tht)\big)$ this maximal value.
If we put
$$
P(\si)=
\si+\si^2+\dots+\si^{m+1}-\big(\sum_{j=0}^m (j+1)\si^j\big)\cdot(\si-\tht)
$$
then it is easy to see that
$\si_0(\tht)$ is the unique root of $P$ in $(\tht, +\infty)$
(notice that $P'<0$ on $(\tht, +\infty)$).

In what follows, $P_{\lin\{w_1,\ldots,w_r\}}$ will stand for the orthogonal
projection onto the linear span generated by vectors $w_1,\ldots,w_r$.

\begin{theor}
\label{decay-gener-m}
Let $2\le m<n$. Let $v_j$ be the eigenvectors of $A$ {\upshape(}see
\eqref{eq:la-order} and \eqref{eq:v-j}{\upshape)}. Put
$$
\Pi_k = P_{\lin\{v_k,\ldots,v_{k+m-1}\}}:\BC^n\to \BC^n, \qquad
k=1, \dots, n-m+1.
$$
Assume that positive constants $\ga$, $K$ satisfy
\begin{itemize}

\item[(i)] $\|B\|\le K$;

\item[(ii)] $\|B^*\Pi_kd\|\ge \ga \|\Pi_k d\|$ for all $d \in \BC^n$ and for
all $k$.

\end{itemize}
Define $\Delta_m$ from \eqref{Delta-m}.
Put
$
\si_0=\si_0\big(\frac {K^2}{\ga^2}\big),
$
\begin{equation}
\label{def-rho}
\rho=\frac 12 \min \bigg\{\De_m  \Big(2\, \frac {\si_0^{m+1}-1}
{\si_0-1} -1\Big)^{-1/2}, \;
\ga\Big[\mu\Big(\frac{K^2}{\ga^2}\Big)\Big]^{1/2}
\bigg\} \; .
\end{equation}
Then
all eigenvalues of the closed-loop system lie in the
half-plane $\Re z \leq -\rho<0$.
\end{theor}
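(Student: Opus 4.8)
The plan is to use the same machinery as in Section~\ref{mainresult}: the stable zeros of $\Phi(z)$ are exactly the closed-loop eigenvalues, so it suffices to show that $\Phi$ has no zeros in the strip $-\rho<\Re z<0$. Fix $z$ with $\Re z \in (-\rho, 0)$ and let $t=\Im z$; since by Lemma~\ref{zero-location} we may assume $t\in[\la_1,\la_n]$, there is a unique index $k$ with $t\in[\la_k,\la_{k+m-1}]$ being (one of) the closest block of $m$ consecutive eigenvalues, or more precisely we choose $k$ so that the eigenvalues $i\la_k,\dots,i\la_{k+m-1}$ are the $m$ eigenvalues of $A$ nearest to $z$. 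Split $B^*$ according to the orthogonal decomposition $\BC^n = \Ran\Pi_k \oplus (\Ran\Pi_k)^\perp$, writing $\Phi(z)= I - \sum_{j=k}^{k+m-1}\frac{1}{(z-i\la_j)^2}b_jb_j^* - B^*\big(\sum_{j\notin\{k,\dots,k+m-1\}}\frac{1}{(z-i\la_j)^2}v_jv_j^*\big)B$. The last (far) term is small in norm: for $j$ outside the block, $|z-i\la_j|\ge \Delta_m/\sqrt{\text{something}}$ after using that $z$ is close to the block, so its norm is controlled by $\|B\|^2$ times a geometric-type sum, which is where the first term in the $\min$ defining $\rho$ and the factor $\big(2\frac{\si_0^{m+1}-1}{\si_0-1}-1\big)^{-1/2}$ come from.

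Next I would handle the near block. On $\Ran\Pi_k$ the operator $\Phi(z)$ restricted and compressed behaves like $I_m$ minus a rank-$\le m$ positive-type perturbation built from $b_k,\dots,b_{k+m-1}$; hypothesis (ii) guarantees $\|B^*\Pi_k d\|\ge \ga\|\Pi_k d\|$, i.e.\ the $b_j$'s spanning this block are uniformly bounded below, while (i) bounds $\|B\|\le K$ from above. The idea is that if $\Phi(z)\xi=0$ for some unit $\xi$, then projecting this relation and using the Schur-complement/regrouping trick from the proof of Lemma~\ref{circles-m} forces $|z-i\la_j|$ (for the block indices $j$) to be controlled in terms of $\|b_j\|$ and the norm of the inverse of the ``far'' part $C(z)$; feeding in $\|C(z)^{-1}\|$ bounded below and the lower bound $\ga$ yields an inequality of the form
\[
\sum_{j=k}^{k+m-1}\frac{\|b_j\|^2}{|z-i\la_j|^2}\ \le\ \text{(polynomial in }\tfrac{K}{\ga}\text{ evaluated at the }|z-i\la_j|/\rho\text{)}.
\]
The function $f(\si)=\frac{\si-\tht}{\si+\dots+\si^{m+1}}$ with $\tht=K^2/\ga^2$ and its maximal value $\mu(\tht)$ at $\si_0$ are precisely tailored so that this inequality is violated once $\Re z > -\rho$ with the second term of the $\min$; the extremal configuration of the $|z-i\la_j|$'s is captured by $\si_0$, which is why $\si_0=\si_0(K^2/\ga^2)$ appears both in $\rho$ and in the far-term estimate.

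Combining the two parts: if $\Re z\in(-\rho,0)$ and $t\in[\la_1,\la_n]$, then the near-block inequality fails, so $\Phi(z)$ is invertible; hence $\Phi$ has no stable zeros with $\Re z\in(-\rho,0)$, which is the claim. The main obstacle I anticipate is the bookkeeping in the near-block estimate: one must correctly account for the coupling between the $m$ block eigenvalues (the reason a single-disk argument as in Lemma~\ref{circles-m} is not enough when $m>1$), control the cross terms between $\Ran\Pi_k$ and its complement through the Schur complement, and then verify that the worst case over all admissible tuples $(|z-i\la_k|,\dots,|z-i\la_{k+m-1}|)$ is exactly governed by the optimization defining $\mu(\tht)$ and $\si_0(\tht)$ — i.e.\ showing that the polynomial $P(\si)$ has its relevant root $\si_0$ in $(\tht,+\infty)$ and that $f$ is maximized there, and that this matches the geometric decay rate of the ``far'' eigenvalue contributions. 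Everything else is of the same flavor as Lemmas~\ref{zero-location}--\ref{circles-m} already proved.
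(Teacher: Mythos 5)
Your proposal correctly identifies the general framework — analyze $\Phi(z)$ on the strip $-\rho<\Re z<0$, split the spectrum of $A$ into a ``near'' and a ``far'' part, and combine hypotheses (i) and (ii) — but it misses the two ideas that actually make the argument go through, and contains a misconception about the far part.

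First, you propose to fix the near block as the $m$ eigenvalues nearest to $z$ (equivalently, a block of $m$ consecutive $\la_j$'s). The paper instead uses an \emph{adaptive} split: it sorts the eigenvalues by distance from $z$ and takes the first $s$ of them for some $s$ with $1\le s\le m$, where $s$ is chosen to satisfy a quantitative ``sufficient separation'' condition between $\eta_\cl=|z-i\la_{\tau(s)}|^2$ and $\eta_\far=|z-i\la_{\tau(s+1)}|^2$. For a fixed block of size exactly $m$, no useful separation between the $m$-th and $(m+1)$-th distances is guaranteed, and your near/far split would not give the bounds you need. The existence of a good cut index $s$ is the content of Lemma~\ref{lm:suff_separ}, which is proved by an induction on the distances: if no cut works, the distances grow no faster than a geometric progression with ratio $\si_0$, and then the first $m+1$ eigenvalues are forced into an interval of length less than $\Delta_m$, a contradiction. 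This induction is exactly where $\si_0(\tht)$, $\mu(\tht)$ and both terms in the $\min$ defining $\rho$ come from — they are not ``tailored to an extremal configuration'' of the near-block inequality, as you guessed, but the constants in this separation lemma.

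Second, your plan says the far contribution is ``small in norm'' and that one bounds the near block through $\|C(z)^{-1}\|$ bounded below. Neither is what happens. The far part $\Psi = I - B^*P_\far(z-A)^{-2}P_\far B$ is of order $I$, not small; what the proof establishes (Lemma~\ref{lm:Psi}) is a lower bound on the \emph{real part of its inverse}, $\Re\Psi^{-1}\ge\be^{-1}I$ with $\be = 1+K^2/(\al x^2)$, using the fact that on the far eigenvalues $(y-\la_k)^2\ge(1+\al)x^2$. Then the near block is absorbed not by a Schur-complement-and-summation estimate of the form you write (there is no inequality of the shape $\sum\|b_j\|^2/|z-i\la_j|^2\le\cdots$ in the proof) but by the abstract invertibility Lemma~\ref{lm:Invert_sum}: if $\Re\Psi^{-1}\ge\be^{-1}I$, $\|Vd\|\ge\ga\|d\|$, and $\be^{-1}\ga^2>\|\La^{-1}\|$, then $\Psi+V\La V^*$ is invertible. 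Hypothesis~(ii) provides $\ga$, the separation condition provides $\|\La^{-1}\|=\eta_\cl$ small enough, and the whole thing is wired so that $\be^{-1}\ga^2>\eta_\cl$ is precisely the ``sufficient separation'' inequality. You would need to supply both Lemma~\ref{lm:Psi} (the real-part estimate for the far block, which is a nontrivial operator inequality, not a norm bound) and Lemma~\ref{lm:Invert_sum} to close the argument; as written, your sketch does not contain either idea and would stall at the point where you try to show the far term is negligible.
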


Notice that $\De_m$ can be positive even in the case when some of the
eigenvalues of $A$ coincide. We do not exclude this case.

The rest of this Section is devoted to the proof of
this theorem.

\

The plan of the proof is as follows.
First we remark that it is easy to get from (i) and (ii) that $K/\ga \ge 1$.
Hence $\si_0>1$.

Fix some
$z=x+iy\in \BC_-$ such that $-\rho< x<0$.
We have to prove that $z\notin \siclloop$.
To do that, let us  consider a
reordering $\la_{\tau(1)}, \la_{\tau(2)},\dots, \la_{\tau(n)}$ of
the eigenvalues $\la_1, \dots, \la_n$ of $\frac 1 i A$ such that
\begin{equation}
\label{reord}
|z-i\la_{\tau(1)}|\le
|z-i\la_{\tau(2)}|\le
\dots
\le
|z-i\la_{\tau(n)}|.
\end{equation}
Let us assume that
\begin{align}
\label{***}
\big|z-i\la_{\tau(1)}\big|^2\le 2x^2
\end{align}
(if it is not true, then $z\notin \siclloop$, due to Theorem
\ref{thm:main-detailed}). We will divide the spectrum $\sigma(A)$ into
two parts:
$$
\sigma_\cl(A)=
\big\{
i\la_{\tau(1)}, \dots, i\la_{\tau(s)}
\big\},
\quad
\sigma_\far(A)=
\big\{
i\la_{\tau(s+1)}, \dots, i\la_{\tau(n)}
\big\},
$$
where the index $s$ will be elected according to Lemma \ref{lm:suff_separ}
below. (Notice that the reordering \eqref{reord} and this partitioning of
$\sigma(A)$ depend on the position of $z$.)
Once this partition is chosen, we put
$$
\eta_\cl=|z-i\la_{\tau(s)}|^2,
\qquad
\eta_\far=|z-i\la_{\tau(s+1)}|^2.
$$
Introduce the notation
$$
\om=\ga^2,
\qquad
\kappa=K^2
$$
(so that $\om \le \kappa$).

We will say that
\textit{
$\sigma_\cl(A)$ and
$\sigma_\far(A)$ are sufficiently separated} (with respect to $z$) if
\begin{equation}
\label{eq:suff-sep}
\eta_\far > 2 x^2
\quad \text{and} \quad
\eta_\cl <
\om
\frac
{\eta_\far-2 x^2}
{\eta_\far-2 x^2+\kappa}.
\end{equation}

Inequality
$\eta_\far > 2 x^2$
implies that
$
\eta_\far
\big(
\om-\kappa +2x^2-\eta_\far
\big) <0.
$
By using the second inequality in \eqref{eq:suff-sep}, one gets that
the sufficient separation implies the strict inequality
$
\eta_\cl < \eta_\far
$.

Before finishing the proof, we need three lemmas.

\begin{lemma}
\label{lm:suff_separ} For any $z$ such that $-\rho < \Re z<0$ and
\eqref{***} holds there exists an index $s$, $1\le s\le m$, such
that the corresponding parts $\sigma_\far(A)$ and $\sigma_\cl(A)$
of the spectrum of $A$ are sufficiently separated.
\end{lemma}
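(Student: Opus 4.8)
The goal is to show that for every $z = x+iy$ with $-\rho < x < 0$ satisfying \eqref{***}, we can pick a cutting index $s$ with $1 \le s \le m$ so that $\sigma_\cl(A)$ and $\sigma_\far(A)$ are sufficiently separated in the sense of \eqref{eq:suff-sep}. The natural strategy is a pigeonhole argument over the $m$ candidate values $s = 1, 2, \dots, m$: I would look at the $m+1$ quantities $\eta_1 = |z - i\la_{\tau(1)}|^2 \le \eta_2 = |z-i\la_{\tau(2)}|^2 \le \dots \le \eta_{m+1} = |z-i\la_{\tau(m+1)}|^2$ and argue that the ratios $\eta_{s+1}/\eta_s$ cannot all be small; since there are $m$ such ratios and their product telescopes to $\eta_{m+1}/\eta_1$, at least one ratio must be at least $(\eta_{m+1}/\eta_1)^{1/m}$. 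So the first step is to get a good lower bound on $\eta_{m+1}/\eta_1$, i.e., on $|z - i\la_{\tau(m+1)}|^2$, using the definition of $\De_m$.

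**Key steps.** First I would observe that among the eigenvalues $i\la_1, \dots, i\la_n$ of $A$, the point $i\la_{\tau(1)}$ closest to $z$ forces $i\la_{\tau(m+1)}$ to be comparatively far: the $m+1$ points $i\la_{\tau(1)}, \dots, i\la_{\tau(m+1)}$ include indices spanning a range of at least $m$ in the original ordering (after accounting for ties), hence two of them are separated by at least $\De_m$ along the imaginary axis; combining this with the triangle inequality and \eqref{***} yields a lower bound of the shape $\eta_{m+1} \ge c\,\De_m^2$ for an explicit constant, and more precisely I expect $\eta_{m+1} \ge \eta_1 \cdot \big(2\frac{\si_0^{m+1}-1}{\si_0-1} - 1\big)$ to drop out of the definition \eqref{def-rho} of $\rho$ together with \eqref{***} (since $\eta_1 \le 2x^2 < 2\rho^2 \le \De_m^2 \big(2\frac{\si_0^{m+1}-1}{\si_0-1}-1\big)^{-1}$). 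Second, by the pigeonhole/telescoping argument there is $s$ with $1 \le s \le m$ and $\eta_{s+1}/\eta_s \ge \si_0$ — here I would want to be a little careful: the bound one extracts is $\eta_{s+1} \ge \si_0 \,\eta_s$ for some $s$, using $\eta_{m+1}/\eta_1 \ge \si_0^m$ or a related inequality involving $\sum_{j=0}^{m}\si_0^j = \frac{\si_0^{m+1}-1}{\si_0-1}$. Third, with this $s$ fixed, I would verify the two requirements in \eqref{eq:suff-sep}: the inequality $\eta_\far = \eta_{s+1} > 2x^2$ should follow since $\eta_{s+1} \ge \si_0 \eta_s \ge \si_0 \eta_1$ and $\si_0 > 1$ while we also need the gap to clear $2x^2$ — this is where the factor $2\frac{\si_0^{m+1}-1}{\si_0-1}-1$ in $\rho$ is calibrated. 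Finally, the second inequality in \eqref{eq:suff-sep}, $\eta_\cl < \om\frac{\eta_\far - 2x^2}{\eta_\far - 2x^2 + \kappa}$, is the delicate one: I would substitute $\eta_\cl = \eta_s \le \eta_{s+1}/\si_0 = \eta_\far/\si_0$, bound $2x^2 < 2\rho^2 \le \tfrac12 \om\,\mu(\kappa/\om)$ using the second term in the min defining $\rho$, and reduce the desired inequality to a statement purely about $\si_0 = \si_0(\kappa/\om)$ and $\mu(\kappa/\om)$; that statement should be exactly the defining property of $\si_0$ as the maximizer of $f(\si) = (\si - \tht)/(\si + \dots + \si^{m+1})$ with $\tht = \kappa/\om$, i.e., $\mu(\tht) = f(\si_0)$.

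**Main obstacle.** The routine parts are the triangle-inequality estimates and the pigeonhole step; the real work is checking that the two numbers appearing in the min of \eqref{def-rho} are chosen precisely so that both clauses of \eqref{eq:suff-sep} hold simultaneously for the $s$ produced by pigeonhole. In particular, I expect the bulk of the effort to be algebraic bookkeeping showing that after writing $\eta_\cl \le \eta_\far/\si_0$ and $2x^2 \le \tfrac12 \om\,\mu(\tht)$, the inequality $\eta_\far/\si_0 < \om(\eta_\far - 2x^2)/(\eta_\far - 2x^2 + \kappa)$ is implied by the identity $\mu(\tht)(\si_0 + \si_0^2 + \dots + \si_0^{m+1}) = \si_0 - \tht$ defining $\mu$ and $\si_0$, perhaps after also using $\eta_\far \le \eta_1 \cdot \si_0^{m}$ (an upper bound on how far the $(s{+}1)$-st point can be, coming from $s \le m$) to control the ratio $(\eta_\far - 2x^2)/(\eta_\far - 2x^2 + \kappa)$ from below. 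Getting all the constants to line up — especially reconciling the "$-1$" in $2\frac{\si_0^{m+1}-1}{\si_0-1}-1$ with the factor $2$ in $2x^2$ and the role of $P(\si)$ whose root is $\si_0$ — will be the crux, and I would organize it by first proving the clean inequality $\eta_{s+1} > 2x^2$ and only then the ratio inequality, keeping $\tht = \kappa/\om$ as a fixed parameter throughout so the maximization characterization of $\si_0, \mu$ can be invoked verbatim.
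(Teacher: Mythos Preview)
Your overall strategy --- force $\eta_{m+1}$ to be large via $\De_m$, then locate a gap among $\eta_1,\dots,\eta_{m+1}$ witnessing sufficient separation --- is correct and is what the paper does. But the pigeonhole on ratios $\eta_{s+1}/\eta_s\ge\si_0$ has a genuine gap: this multiplicative condition does \emph{not} imply \eqref{eq:suff-sep}. It need not give $\eta_{s+1}>2x^2$ (if several $\la_{\tau(j)}$ cluster near~$y$, the first large ratio can occur while both $\eta_s,\eta_{s+1}$ are still below $2x^2$), and even when $\eta_{s+1}>2x^2$ the second clause can fail, since its right-hand side tends to~$0$ as $\eta_{s+1}\downarrow 2x^2$ while $\eta_s\le\eta_{s+1}/\si_0$ stays bounded away from~$0$. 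The extra input you reach for, $\eta_\far\le\eta_1\si_0^{m}$, does not follow from $s\le m$; choosing $s$ minimal bounds $\eta_s$, not $\eta_{s+1}$.

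The missing idea is that the \emph{failure} of \eqref{eq:suff-sep} at an index $s$ with $\eta_{s+1}>2x^2$ rearranges to the \emph{affine} inequality $\eta_{s+1}-2x^2\le\kappa\,\eta_s/(\om-\eta_s)\le\si_0\,\eta_s$ (the last step once $\eta_s\le\om-\kappa/\si_0$), i.e.\ $\eta_{s+1}\le\si_0\eta_s+2x^2$, not a pure ratio bound. The paper therefore argues by contradiction and induction: it sets $t$ to be the largest index with $\eta_t\le 2x^2$ (so $1\le t\le m$ and every candidate $s\ge t$ automatically has $\eta_{s+1}>2x^2$), assumes no $s\in\{t,\dots,m\}$ works, and iterates the affine recursion from the base $\eta_t\le 2x^2$ to get $\eta_j\le 2x^2\,\frac{\si_0^{\,j-t+1}-1}{\si_0-1}$. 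The second term in \eqref{def-rho} is calibrated so that these iterates stay below $\om-\kappa/\si_0$ (keeping the recursion valid); the first term so that the resulting bound on $\eta_{m+1}$ contradicts the estimate $\eta_{m+1}\ge x^2+\De_m^2/4$. A direct telescoping can be made to work only after shifting to $\zeta_j=\eta_j+\frac{2x^2}{\si_0-1}$ and starting at~$j=t$, which is the paper's induction in disguise.
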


\begin{proof}
Take some $z$ that satisfies the hypotheses. Let
$\la_{\tau(1)}, \dots, \la_{\tau(t)}$ be all point of the spectrum of $A$
that satisfy
$$
\eta_j \defin |z-i\la_{\tau(j)}|^2\le 2x^2.
$$
Since
$$
|x| < \rho \le \De_m/2
$$
(see \eqref{def-rho}),
it follows that $1\le t \le m$ and that $\eta_{t+1}> 2x^2$.
Assume that the subdivision of the spectrum of $A$ into two
sufficiently separated parts is impossible. Then
\begin{equation}
\label{not-sep}
\eta_j \ge
\om
\frac
{\eta_{j+1}-2 x^2}
{\eta_{j+1}-2 x^2+\kappa}
\qquad \text{for} \enspace
j=t, \dots, m.
\end{equation}
We will prove that this leads to a contradiction.
Put $\dee=\om-\kappa/\si_0$.
Since $\si_0>K^2/\ga^2$, we have $0<\dee<\om$ and $\si_0=\frac \kappa {\om-\dee}$.

We prove that
\begin{equation}
\label{induct}
\eta_j\le
2x^2\,\frac{\si_0^{j-t+1}-1}{\si_0-1}
\end{equation}
for $j=t,\dots,m+1$ by induction in $j$. The induction base, $j=t$, follows from
our assumptions. Assume that \eqref{induct} holds for $j=j_0$, $t\le j_0\le m$.
By using that $|x| < \rho$ and \eqref{def-rho}, we get
$$
2x^2 < 2\rho^2\le \frac{\ga^2} 2 \;\mu\left(\frac \kappa \om\right)
=
\frac {\om \si_0-\kappa}
{2(\si_0+\si_0^2+\dots+\si_0^{m+1})}. 
$$
Therefore
$$
\eta_{j_0} \le 2x^2\, \frac {\si_0^{m+1}-1} {\si_0-1}
\le \frac 12 \, (\om - \frac \kappa {\si_0})=\frac \dee 2< \om .
$$
We also have
$\eta_{j_0+1}>2x^2$. Hence \eqref{not-sep} implies that
$$
\eta_{j_0+1} - 2x^2
\le
\kappa \frac
{ \eta_{j_0} }
{\om- \eta_{j_0} }
\le
\eta_{j_0}
\frac
{\kappa }
{\om- \de } = \eta_{j_0} \si_0.
$$
It follows that
$$
\eta_{j_0+1} \le \eta_{j_0} \si_0 +
2x^2 \le 2x^2 \Big[ \frac{\si_0^{j_0-t+1}-1}{\si_0-1} \si_0 + 1
\Big] = 2x^2 \,\frac{\si_0^{j_0-t+2}-1}{\si_0-1}.
$$
This gives the induction step. Hence \eqref{induct}
holds for all $j=t,\dots, m+1$.
In particular,
$$
\eta_{m+1}=\big|z-i\la_{\tau(m+1)}\big|^2
\le
2x^2 \, \frac{\si_0^{m+1}-1}{\si_0-1}.
$$
This gives a contradiction. Indeed, it follows that
$\la_{\tau(1)}, \dots, \la_{\tau(m+1)}$ are contained in the
interval $[y-\ell, y+\ell]$, where $\ell=(\eta_{m+1}-x^2)^{1/2}$.
Then
$$
\ell^2\le x^2
\Big(
2\, \frac{\si_0^{m+1}-1}{\si_0-1}-1
\Big)
<
\rho^2
\Big(
2\,\frac{\si_0^{m+1}-1}{\si_0-1}-1
\Big)
\le \frac {\De_m^2} 4
$$
(the last inequality is due to \eqref{def-rho}).
Hence $\ell^2< \frac {\De^2_m} 4$.
We get a contradiction to the definition of $\De_m$.
\end{proof}

\

Next, we take $s=s(z)$ as in the above Lemma and put
$$
P_\cl=P_{\lin\{v_{\tau(1)},\dots,v_{\tau(s)}\}},
\qquad
P_\far=P_{\lin\{v_{\tau(s+1)},\dots,v_{\tau(n)}\}}
$$
(recall that $v_j$ is an eigenvector of $A$ corresponding to $i\la_j$). Then
\begin{equation}
\Phi(z)=I-B^*(z-A)^{-2}B=\Psi+\Sigma,
\end{equation}
where
\begin{equation}
\label{PsiSigma}
\Psi  =I-B^*P_\far(z-A)^{-2}P_\far B, \qquad
\Sigma  =-B^*P_\cl(z-A)^{-2}P_\cl B.
\end{equation}

Put
$$
W=B^*P_\far,
$$
then
$$
\Psi=I-W(z-A)^{-2}W^*
=
I+\Psi_1,
$$
where
$$
\Psi_1\defin -W(z-A)^{-2}W^*.
$$
Define $\al>0$ from the equation
\begin{align}
\label{****}
\eta_\far=(2+\al)x^2.
\end{align}
 Then
\begin{equation}
\label{eq:sectors}
(y-\la_k)^2\ge (1+\alpha)x^2
\qquad
\text{for all}  \enspace i\la_k\in \si_\far(A).
\end{equation}
Hence
\begin{equation}
\label{9}
(yI+iA)^2-x^2 I\big|\Ran P_\far
\ge
\alpha x^2 I \big|\Ran P_\far.
\end{equation}

\begin{lemma}
\label{lm:Psi}
Suppose $\|B\|\le K$. Then
\begin{equation}
\label{RePsi}
\Re \Psi^{-1}\ge \be^{-1} I,
\end{equation}
where
$$
\be=1+\frac {K^2}{\alpha x^2}.
$$
\end{lemma}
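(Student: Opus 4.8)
The plan is to establish the stronger, quantitative inequality $\Re\langle\Psi^{-1}\xi,\xi\rangle\ge\be^{-1}\|\xi\|^2$ for every $\xi\in\BC^m$, which is exactly \eqref{RePsi}. Write $z=x+iy$ and recall that $(z-A)^{-2}$ is diagonal in the orthonormal basis $\{v_j\}$: on $v_j$ it acts as multiplication by $s_j:=(x+i(y-\la_j))^{-2}$. Abbreviate $T=(z-A)^{-2}$ and $W=B^*P_\far$, so that $\Psi=I-WTW^*$. For every index $j$ with $i\la_j\in\si_\far(A)$, inequality \eqref{eq:sectors} gives $(y-\la_j)^2\ge(1+\al)x^2$, whence $\Re s_j=\dfrac{x^2-(y-\la_j)^2}{\big(x^2+(y-\la_j)^2\big)^2}<0$ and, crucially,
\[
|s_j|^2=\frac{1}{\big(x^2+(y-\la_j)^2\big)^2}=\frac{-\Re s_j}{(y-\la_j)^2-x^2}\le\frac{-\Re s_j}{\al x^2}
\]
(here $x\ne0$ since $-\rho<x<0$).

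Next I would fix $\xi$, set $\eta=\Psi^{-1}\xi$ and $\zeta=W^*\eta=P_\far B\eta\in\Ran P_\far$. Since $T$ commutes with the spectral projection $P_\far$ of $A$, one has $WTW^*\eta=B^*T\zeta$, so $\xi=\Psi\eta=\eta-B^*T\zeta$. Taking the real part of $\langle\Psi^{-1}\xi,\xi\rangle=\langle\Psi\eta,\eta\rangle$ and using $\Re(WTW^*)=W(\Re T)W^*$ yields
\[
\Re\langle\Psi^{-1}\xi,\xi\rangle=\|\eta\|^2-\langle(\Re T)\zeta,\zeta\rangle=\|\eta\|^2+R,\qquad R:=\sum_{i\la_j\in\si_\far(A)}(-\Re s_j)\,|\langle\zeta,v_j\rangle|^2\ge0
\]
(in particular $\Re\Psi\ge I$, but the refined identity is what is needed). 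On the other hand $\|T\zeta\|^2=\sum_{i\la_j\in\si_\far(A)}|s_j|^2|\langle\zeta,v_j\rangle|^2\le(\al x^2)^{-1}R$ by the pointwise bound above, and since $\|B\|\le K$,
\[
\|\xi\|\le\|\eta\|+\|B^*T\zeta\|\le\|\eta\|+K\|T\zeta\|\le\|\eta\|+\frac{K}{\sqrt{\al}\,|x|}\sqrt{R}.
\]

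Finally I would close the estimate by Cauchy--Schwarz, in the form $(a+b)^2\le(1+c)\big(a^2+c^{-1}b^2\big)$, valid for all $c>0$, applied with $a=\|\eta\|$, $b=\tfrac{K}{\sqrt{\al}\,|x|}\sqrt R$, and the split $c=\tfrac{K^2}{\al x^2}=\be-1$ chosen so that both resulting coefficients equal $\be$; this gives $\|\xi\|^2\le\be(\|\eta\|^2+R)=\be\,\Re\langle\Psi^{-1}\xi,\xi\rangle$, which is \eqref{RePsi}. The one delicate point is precisely this last bookkeeping: a crude route through $\|\Psi\|\le1+\tfrac{K^2}{(2+\al)x^2}$ together with the general principle ``$\Re\Psi\ge I$ and $\|\Psi\|\le M$ imply $\Re\Psi^{-1}\ge M^{-2}$'' used in Lemma~\ref{circles-m} would only yield the much weaker bound $\Re\Psi^{-1}\ge\big(1+\tfrac{K^2}{(2+\al)x^2}\big)^{-2}$; obtaining the sharp constant $\be^{-1}$ forces one to keep the ``bad'' far-spectrum contributions to $\|\xi\|$ tied to the very same quadratic form $R$ that governs $\Re\langle\Psi^{-1}\xi,\xi\rangle$, via $|s_j|^2\le(\al x^2)^{-1}(-\Re s_j)$, and then to optimize the Cauchy--Schwarz weight.
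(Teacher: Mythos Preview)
Your argument is correct. Both your proof and the paper's establish the same scalar inequality
\[
\|\Psi\eta\|^2\le\be\,\Re\langle\Psi\eta,\eta\rangle\qquad\text{for all }\eta\in\BC^m,
\]
which, after substituting $\xi=\Psi\eta$, is exactly \eqref{RePsi}. The common spectral ingredient is the pointwise bound $|s_j|^2\le(-\Re s_j)/(\al x^2)$ for $i\la_j\in\si_\far(A)$; in the paper this appears as the operator inequality $(\bar z+A)^{-2}(z-A)^{-2}\le(\al x^2)^{-1}G$ on $\Ran P_\far$ (equations \eqref{9}--\eqref{11}).

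The difference is in how the pieces are assembled. The paper expands $\Psi^*\Psi$ in operator form, obtaining first $\Psi^*\Psi\le(1+\be)\Re\Psi-\be I$, then makes a short detour: it rewrites this as $\|\Psi-\tfrac{\be+1}{2}I\|\le\tfrac{\be-1}{2}$ to conclude $\Re\Psi\le\be I$, and only then arrives at $\Psi^*\Psi\le\be\Re\Psi$. You instead bound $\|\xi\|=\|\Psi\eta\|$ by the triangle inequality and close with the weighted Cauchy--Schwarz split $(a+b)^2\le(1+c)(a^2+c^{-1}b^2)$, choosing $c=\be-1$ so that both weights collapse to $\be$. Your route is a bit more direct and avoids the spectral-disk detour; the paper's route keeps everything at the operator level and yields the extra information $\si(\Psi)\subset\overline{D(\tfrac{\be+1}{2},\tfrac{\be-1}{2})}$ along the way. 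One small point of presentation: you invoke $\eta=\Psi^{-1}\xi$ before having justified invertibility, but your own identity $\Re\langle\Psi\eta,\eta\rangle=\|\eta\|^2+R$ (valid for arbitrary $\eta$) already gives $\Re\Psi\ge I$ and hence invertibility; it would be cleaner to state this first.
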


\begin{proof}
A calculation gives
\begin{equation}
\label{10}
\Re \Psi_1 =- \frac 12 W
\big[
(z-A)^{-2} + (\bar z+A)^{-2}
\big]W^*
=W G W^*
\end{equation}
where
\begin{equation}
\label{11}
 G =\frac
{ (y+iA)^2-x^2I }
{ (z-A)^2 (\bar z+A)^2 }\Big| \Ran P_\far.
\end{equation}
We wish to prove that
\begin{equation}
\label{Psi-to-min1}
\Psi^{-1}+ \Psi^{*-1}\ge 2 \be^{-1} I.
\end{equation}
First let us check the inequality
\begin{equation}
\label{12}
\Psi^*\Psi \le \frac \be {2 } \big(\Psi+\Psi^*\big) =\be \Re \Psi.
\end{equation}
Inequality \eqref{12} is obtained as follows:
\begin{align*}
\Psi^*\Psi & =
\big(
I+\Psi_1^*
\big)
\big( I+\Psi_1 \big) \\
& = I + 2 \Re \Psi_1 + \Psi_1^*\Psi_1 \\
& = I + 2 W G W^* +
W(\bar z+A)^{-2} W^* W (z-A)^{-2} W^* \\
& \le
I + 2 W G W^* +
K^2 W(\bar z+A)^{-2} (z-A)^{-2} W^* \\
& \le
I + \big(2+ \frac { K^2 } {\al x^2}\big) W G W^* .
\end{align*}
The last inequality is due to \eqref{9} and \eqref{11}.
By \eqref{10}, this implies
\begin{align}
\label{estim-Psi}
\Psi^*\Psi & \le
I + (1+\be) \Re \Psi_1 
 = (1+\be) \Re \Psi -\be  I.
\end{align}
Rewrite this inequality as
$$
\big(\Psi^*-\frac {\be+1} 2 \,I\big)
\big(\Psi-\frac {\be+1} 2 \,I\big)
\le \big(\frac {\be - 1}2 \big)^2 I
$$
or, equivalently,
$\| \Psi-\frac {\be+1} 2 I \|\le \frac {\be - 1}2$. This gives
the inequality
$\Re \Psi \le \big(\frac {\be+1} 2 + \frac {\be - 1}2 \big) I =\be I$.
Then by  \eqref{estim-Psi},
$\Psi^*\Psi\le (\be+1)\Re \Psi - \be I\le
\be \Re \Psi$, and we get \eqref{12}.

We also get that
$\si(\Psi)\subset \big\{z: \big|z-\frac {\be + 1}2\big|\le \frac {\be - 1}2\big\}$, which implies that
$\Psi$ is invertible.
So \eqref{Psi-to-min1} follows immediately from
\eqref{12}.
\end{proof}

\begin{lemma}
\label{lm:Invert_sum} Suppose $1\le s\le m$, $\Psi$ is an $m\times
m$ matrix satisfying $\Re \Psi^{-1}\ge \be^{-1} I$, $V$ is an
$m\times s$ matrix such that
\begin{equation}
\label{eq:estim-V}
\|Vd\|
\ge
\ga \|d\|, \qquad
d\in \BC^s
\end{equation}
and $\La$ is an $s\times s$ invertible matrix. If $\be, \ga$ are
positive and
\begin{equation}
\label{15}
\be^{-1} \ga^2  > \|\La^{-1}\|,
\end{equation}
then the matrix $\Psi+V \La V^*$ is invertible.
\end{lemma}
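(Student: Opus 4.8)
The plan is to show that $\Psi + V\La V^*$ has trivial kernel, which for a square matrix is equivalent to invertibility. First I would reduce to an estimate involving $\Re\Psi^{-1}$: suppose $(\Psi + V\La V^*)d = 0$ for some nonzero $d \in \BC^m$. Then $\Psi d = -V\La V^* d$, so $d = -\Psi^{-1}V\La V^* d$. If $V^*d = 0$ this forces $\Psi d = 0$, contradicting the invertibility of $\Psi$ (which follows from $\Re\Psi^{-1} \ge \be^{-1}I > 0$); so we may assume $V^*d \ne 0$. Setting $e = V^*d \in \BC^s$, we multiply the relation $d = -\Psi^{-1}Ve'$ where $e' = \La e$ on the left by $e'^*V^*$ — or more symmetrically, I would take the inner product of $\Psi d = -V\La V^*d$ with $\Psi^{-1}$ applied appropriately. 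The cleanest route: from $\Psi d = -V\La V^*d$ we get, pairing with $d$ after applying $\La^{-1}V^{*}(\cdot)$ suitably, a scalar identity of the form
\[
\langle \La^{-1} (V^*d), (V^*d)\rangle = -\langle \Psi^{-1}(V\La V^*d), \Psi^{-1}\Psi d\rangle,
\]
but it is simpler to argue with norms and real parts directly as in the proof of Lemma \ref{circles-m}.

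Concretely, I would proceed as follows. Write $w = V^*d$, so $\Psi d = -V\La w$, hence $d = -\Psi^{-1}V\La w$. Apply $V^*$: $w = -V^*\Psi^{-1}V\La w$, i.e. $\La^{-1}w = -V^*\Psi^{-1}Vw$ after noting $\La$ is invertible (so if $w \ne 0$ then $\La w \ne 0$; also $w \ne 0$ since otherwise $d=0$). Now take the real part of the inner product with $w$:
\[
\Re\langle \La^{-1}w, w\rangle = -\Re\langle \Psi^{-1}Vw, Vw\rangle \le -\be^{-1}\|Vw\|^2 \le -\be^{-1}\ga^2\|w\|^2,
\]
using $\Re\Psi^{-1} \ge \be^{-1}I$ and \eqref{eq:estim-V}. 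On the other hand,
\[
|\langle \La^{-1}w, w\rangle| \le \|\La^{-1}\|\,\|w\|^2,
\]
so that $\be^{-1}\ga^2\|w\|^2 \le \|\La^{-1}\|\,\|w\|^2$, contradicting \eqref{15} since $w \ne 0$. Therefore no nonzero $d$ lies in the kernel, and $\Psi + V\La V^*$ is invertible.

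The only delicate point is the sign bookkeeping: I must be careful that $\Re\Psi^{-1} \ge \be^{-1}I$ gives a lower bound on $\Re\langle\Psi^{-1}\xi,\xi\rangle$ for the particular vector $\xi = Vw$, which then enters with a minus sign and combines correctly with the upper bound $\|\La^{-1}\|\,\|w\|^2$ on $|\Re\langle\La^{-1}w,w\rangle|$; there is no genuine obstacle here, just the need to track which quantity is bounded above and which below. Note that the hypothesis $s \le m$ is not actually used in this argument and is presumably carried along only because it holds in the application; the proof works for any rectangular $V \in \BC^{m\times s}$ satisfying \eqref{eq:estim-V}.
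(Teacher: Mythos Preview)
Your approach is the same as the paper's, but there is one algebraic slip that invalidates the argument as written. From $w=-V^*\Psi^{-1}V\La w$ you assert ``i.e.\ $\La^{-1}w=-V^*\Psi^{-1}Vw$''; this does not follow, since $\La$ need not commute with $V^*\Psi^{-1}V$. Left-multiplying by $\La^{-1}$ only gives $\La^{-1}w=-\La^{-1}V^*\Psi^{-1}V\La w$, which is of no use.

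The fix is immediate and is exactly what the paper does: pair with $\La w$ rather than with $w$. Setting $y=\La w$, the correct identity $w=-V^*\Psi^{-1}V\La w$ reads $\La^{-1}y=-V^*\Psi^{-1}Vy$. Taking the inner product with $y$ yields
\[
\Re\langle \La^{-1}y,\,y\rangle=-\Re\langle \Psi^{-1}Vy,\,Vy\rangle\le -\be^{-1}\|Vy\|^2\le -\be^{-1}\ga^2\|y\|^2,
\]
while $|\langle \La^{-1}y,y\rangle|\le \|\La^{-1}\|\,\|y\|^2$; since $y=\La w\ne 0$ iff $w\ne 0$, this gives the desired contradiction with \eqref{15}. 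With this correction your proof coincides with the paper's (the paper phrases it as bounding $|\langle c,\La c\rangle|$ from above and below, with $c=V^*w$). Your remark that the hypothesis $s\le m$ is not actually used is correct.
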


\begin{proof}
Suppose it is not,
\begin{align}
\label{Psi.w}
\big(
\Psi+V \La V^*
\big) w
=0
\end{align}
for some $w\in \BC^m$, $w\ne 0$. Put $V^*w=c\in \BC^s$. One gets
\begin{align*}
w&=-\Psi^{-1} V \La c, \\
-V^* \Psi^{-1} V \La c & = c.
\end{align*}
Hence
$$
\langle c, \La c \rangle=
-
\langle
V^* \Psi^{-1} V \La c, \La c
\rangle.
$$
Then, on one hand, one has
$$
\big|
\langle c, \La c \rangle
\big|
=
\big|
\langle \La^{-1} \La c, \La c \rangle
\big|
\le
\| \La^{-1} \| \cdot  \|\La c\|^2,
$$
and on the other,
\begin{align*}
\big|
\langle c, \La c \rangle
\big|
=
\big|
\langle
 V^* \Psi^{-1} V \La c, \La c
\rangle
\big|
\ge \Re \langle
V^* \Psi^{-1} V \La c, \La c
\rangle  
\ge \be^{-1} \|V\La c\|^2
\ge
\be ^{-1} \ga^2  \|\La c\|^2.
\end{align*}
These inequalities and \eqref{15} imply that $c=V^* w=0$.
By \eqref{Psi.w}, it follows that $\Psi w=0$, so that $w=0$, which contradicts to
the choice of $w$.
\end{proof}

\begin{proof}[The end of the proof of Theorem \ref{decay-gener-m}]
As before, we assume that some $z=x+iy\in \BC_-$ with $-\rho< x<0$
has been fixed.
Lemma \ref{lm:suff_separ}
gives us an index $s$, $s\le m$, which defines a partition of $\si(A)$
into two sufficiently separated parts, $\si_\cl(A)$ and $\si_\far(A)$.
Define $\Psi$ and $\Sigma$ from \eqref{PsiSigma}.

By Lemma \ref{lm:Psi}, \eqref{RePsi} holds.
Put
$$
V=B^*\big|\Ran P_\cl, \qquad \La= (z-A)^{-2}|\Ran P_\cl
$$
and apply Lemma \ref{lm:Invert_sum} to these two matrices and $\Psi$.
Since $\eta_\cl<\eta_\far$, it follows that there is some $k$,
$1\le k\le n-m+1$, such that all the indices $\tau(1), \dots, \tau(s)$ are contained
in the set $\{k,k+1, \dots, k+m-1\}$
(this is true even if $A$ has multiple eigenvalues). Therefore, by hypothesis (ii) of the Theorem,
$V$ satisfies \eqref{eq:estim-V}.
By \eqref{****} and \eqref{eq:suff-sep}, one has
$$
\be^{-1} \ga^2=\ga^2 \frac {\al x^2}{\al x^2+K^2}=
\ga^2\, \frac {\eta_\far -2 x^2}{\eta_\far -2 x^2+K^2 }
>\eta_\cl=\|\La^{-1}\|.
$$
Hence \eqref{15} holds.
So $\Phi(z)=\Psi+V \La V^*=\Psi+\Sigma$ is invertible, and therefore $z\notin \siclloop$.
This proves the Theorem.
\end{proof}

\section{A brief account of our estimates of $\gadecay$}
\label{briefaccount}

Here, for the reader's convenience, we gather all the above estimates.

\underline{Theorem \ref{thm:main} for $m>1$}: $\gadecay >\ellest$; see
\eqref{def-ellest} and \eqref{eq:circles};

\medskip

\underline{Theorem \ref{thm:main}  for $m=1$}:  $\gadecay >\ellest^1$.

\medskip

\underline{Corollary \ref{corol-phiks}}:
$\gadecay < \Gamma_+=\min \{(1+\varphi_k)\|b_k\| : \; \varphi_k < 1\}$;

 \hskip2.3cm $ \gadecay > \Gamma_- = \min_k (1-\varphi_k)\|b_k\|$\;
if
$\varphi_k < 1$ for all $k$,

\hskip2.3cm where $\varphi_k = \frac{2\|B\|^2}{(2-\sqrt{2})^2\delta_k^2}$,
$b_k= B^* v_k$ and $Av_k = i\la_k v_k$, $\|v_k\|=1$.

\medskip

\underline{Theorem \ref{decay-gener-m}}: $\gadecay\ge\rho$ for $m\ge 2$,
where $\rho$ is defined in \eqref{def-rho}.

\medskip

We notice also that if these statements provide several lower or upper bounds for
$\gadecay$, then, obviously, one can take the best one of these.

\section{Numerical examples}
\label{sec:num-ex}

\subsection{An example with $4$ states and $2$ controls}
\label{4states2controls}

Take $n=4$, $m=2$ and consider the matrices
\[
 A = i\;\left[\begin{array}{cccc}
         -b & 0 & 0 & 0\\
          0 & -a & 0 & 0\\
          0 & 0 & a & 0\\
          0 & 0 & 0 & b
          \end{array}\right],
          \qquad
 B = \left[\begin{array}{cc}
          1 & 0\\
          0 & 1\\
          1 & 0\\
          0 & 1
             \end{array}\right],
\]
where $a,b$ are positive real numbers. Consider the LQR problem for $(A,B)$
with  $Q = I$, $R = I$.
Table \ref{table:m2} collects the values of $\|X\|$ and of $\gadecay$
for different values of $a,b$, obtained by numerical calculations.
The last four columns of this Table show
the values of  $\Gamma_-$, $\Gamma_+$, $\ellest$ and $\rho$, which are
the lower and upper bounds for $\gadecay$ guaranteed by our theorems (see the
previous section for a brief account).

Row 1 shows that the bounds $\Gamma_-$, $\Gamma_+$ for
$\gadecay $ are very precise in the case of large separation of the
spectrum of $A$. In rows 2 and 3, one can see that as the separation
diminishes (and some $\phi_k$'s approach to $1$), the bounds $\Gamma_-$,
$\Gamma_+$ become much more vague.

In row 4, there is some $k$ with $\phi_k < 1$, but we do not have $\phi_k < 1$
for all $k$. Hence, only the upper bound $\Gamma_+$ from
Corollary~\ref{corol-phiks} holds, and
$\Gamma_-$ is not defined.

In rows 5, 6 and 7, $\phi_k \ge 1$ for all $k$. Hence
Corollary~\ref{corol-phiks} provides no bounds at all, and we do not show the
values of $\Gamma_-$, $\Gamma_+$. In these rows one can see how the lower
estimate $\rho$ for $\gadecay$ from Theorem \ref{decay-gener-m} can give better
results than $\ellest$ from Theorem \ref{thm:main}, especially if some
eigenvalues of $A$ are close together in comparison with $\|B\|$.

Part (2) of Theorem \ref{thm:main} and Theorem \ref{decay-gener-m}
show that if the minimal singular value of $B$ is large in
comparison with the diameter of the spectrum of $A$, then the
closed-loop spectrum divides in two parts: $m$ eigenvalues are in
the band $\Re z\in [-\|B\|,-\frac{\sqrt{6}}{4}\sigma_m]$ and the
resting $n-m$ eigenvalues lie in the band $\Re z\in
\big(-\sqrt{3}\Delta,-\max(\ellest,\rho)\big)$. Within the values
of $a,b$ in the table, this result only applies to rows 6 and 7. For instance,
for row 7, Part (2) of Theorem \ref{thm:main} yields that two
closed-loop eigenvalues lie in the band
$\Re z \in [-1.4142, -0.8660]$ and two others in the band $\Re z\in (-0.3811,
-0.0198)$. Numerical simulation shows that two eigenvalues of
$A-BF$ satisfy $\Re \nu_{1,2}\approx -1.4024$ and two others
satisfy $\Re \nu_{3,4}\approx -0.1062$.

Simulation also shows that in many cases,
the relative error in the estimate
$\gadecay>\ellest^1$, which
Theorem \ref{thm:main} gives for $m=1$,
is less than in the corresponding estimate for $m>1$.
(On the other hand, the quality of the control increases with the increase of $m$).

\begin{table}
\begin{center}
\begin{tabular}{|l||l|l||l|l||l|l|l|l|}
\hline
 & $a$   & $b$                   &     $\|X\|$  & $\gadecay$  &  $\Gamma_-$ &
$\Gamma_+$ & $\ellest$ & $\rho$ \\
\hline
$1$ & $15$ &      $45$              &     $1.0171$   &  $0.9999$   &  $0.9870$
 &
$1.0130$   & $0.7040$  &  $0.0806$ \\
$2$ & $5$ &      $15$               &     $1.0537$   &  $0.9988$   &  $0.8834$
 & $1.1166$   & $0.6799$  &  $0.0806$  \\
$3$ & $1.8$ &     $5.4$            &     $1.1667$   & $0.9910$    &  $0.1006$
&
$1.8994$   & $0.5403$  &  $0.0806$ \\
$4$ & $1$ & $10$ & $1.1031$ & $0.9960$ & $-$ & $1.1439$ & $0.3536$ & $0.0806$ \\
$5$ & $4$ & $4.1$              &     $1.1456$   & $0.9928$    &  $-$         &
$-$      & $0.0018$  &  $0.0806$ \\
$6$ & $0.2$ & $0.22$           &     $5.4750$   & $0.2199$    &  $-$     &  $-$
&
$7.07\cdot 10^{-5}$ &   $0.0396$  \\
$7$ & $0.1$  &  $0.11$         &       $10.2326$  & $0.1062$    &  $-$     &
$-$ & $1.77\cdot 10^{-5}$ &  $0.0198$ \\
\hline
\end{tabular}
\end{center}
\caption{Numerical results and bounds for the LQR problems, $m=2$.}
\label{table:m2}
\end{table}

\subsection{A control problem for a mechanical problem}
\label{mech}

In many practical problems there is a large choice of possible physical or geometric configurations of the
controller, which might make it necessary to solve a large amount of LQR optimization problems, in order to
find a good one in some alternative sense. We will be speaking about the search of
an LQR optimal regulator, which is also good in the sense that it has the largest possible
$\gadecay$.

In this subsection, we propose an algorithm which allows one
to reduce drastically this search, by making use of our theoretical estimates.
We will illustrate this algorithm on a simple mechanical system
(a very similar example has been considered in \cite{HenchHeKuMehrm98} in the presence of damping).
The same algorithm, in fact,
can be applied to the following general class of problems:
to optimize $\gadecay$ among a large finite family of LQR problems $(A, B_j)$,
with $A$ skew-Hermitian.
In other words, the system matrix $A$ is supposed to be fixed,
but there are several possible choices for the control matrix $B$.

This is not the only application of our bounds. We believe that
in many cases the control designer can apply
our results to obtain some a priori information on the systems in study.

Consider a one-dimensional massless string.
Attached to the string are $N$ equal point masses of mass $M$, that are placed
along it at equal distances $h$. It is assumed that the unperturbed
string occupies the interval $[0, (N+1)h]$ of the $x$ axis in an
$xy$ plane; the string is supposed to move only in this plane.
The two endpoints of the string are fixed, and it has constant tension $\tau >
0$.

The problem is to stabilize the string using $m$ controls, where
$1\le m\le N$. Namely, we choose point masses with numbers $j_1, j_2, \dots,
j_m$, where
$1\le j_1 < j_2 < \dots < j_m\le N$, and apply a force $u_k$ to the $j_k$
point mass in the direction $y$. Every configuration $(j_1, j_2, \dots, j_m)$ of
controls leads to its own linear quadratic control problem and to a corresponding
stable closed-loop system, which is optimal in the linear quadratic sense.
However, the exponential decay rates of these closed-loop systems will depend
on the chosen configurations of the control. The problem we discuss here
is to find the configuration $(j_1, j_2, \dots, j_m)$
which leads to the best exponential decay rate.

In the experiment, we have chosen the parameters $\tau/h = 10$, $M = 50$ and
$N=30$. We tried the values $m=1, 2, 3, 4, 5, 8$.
One can observe that $\gadecay$ depends much on the choice of the configuration
$\mathcal{J}=(j_1, j_2, \dots, j_m)$ (these are the numbers of the masses to
which
the control forces are applied). For example, if $m=5$, then the best value
of $\gadecay$ equals to $8.87\cdot 10^{-4}$, which is attained, for instance, for
$\mathcal{J}=(2, 5, 11, 19, 27)$, while for
$\mathcal{J}=(1, 2, 3, 29, 30)$ one only gets
$\gadecay= 2.80\cdot 10^{-4}$, which is several times less.

There are $\binom N m$ configurations, and theoretically,
the problem can be solved by a ``brute force'' complete search among all of them.
However, even for moderate values of $N$ and $m$, solving
numerically  $\binom N m$ LQR problems will be very time-consuming.

If the position of the $j$-th point mass is $(jh, y_j)$, we obtain
(in the linear approximation) the following system of ODEs:

\[
 \begin{cases}
  My_j'' = \frac{\tau}{h}(y_{j+1} + y_{j-1} - 2y_j), & j = 1,\ldots,N, \quad
j\ne j_k, \\
  My_j'' = \frac{\tau}{h}(y_{j+1} + y_{j-1} - 2y_j)+u_k, & j = j_k, \quad
k=1,\dots, M, \\
  y_0 = y_{N+1} = 0,
 \end{cases}
\]
where $y_0, y_{N+1}$ have only been
introduced for convenience in the notation.

Put
\[
 A_0 = \begin{pmatrix}
        2 & -1 & 0 & 0 & \cdots & 0 & 0\\
       -1 & 2 & -1 & 0 & \cdots & 0 & 0\\
        0 & -1 & 2 & -1 & \cdots & 0 & 0 \\
        \cdots & \cdots & \cdots & \cdots & \cdots & \cdots & \cdots \\
    0 & 0 & 0 & 0 & \cdots & 2 & -1 \\
    0 & 0 & 0 & 0 & \cdots & -1 & 2 \\
       \end{pmatrix}_{N \times N},
\quad
A = \begin{pmatrix}0 & I \\ -\frac{\tau}{hM}A_0 & 0\end{pmatrix},
\quad
x = \begin{pmatrix}y_1 \\ \vdots \\ y_N \\ y_1' \\ \vdots \\ y_N'\end{pmatrix},
\]
\[
 B = \frac{1}{M}B_0, \qquad
 B_0 = \begin{pmatrix}
  0_{N\times m}\\
  \hline
  e_{j_1},\ldots,e_{j_m}\\
  \end{pmatrix}_{2N \times m},
\]
where $e_j \in \mathbb{C}^N$ is the $j$-th (column) vector of the canonical basis. Then
we obtain the control system
\[
x' = Ax + Bu.
\]

The energy of the system can be defined in terms of the following inner
product in $\mathbb{C}^{2N}$:
\[
 \langle f, g \rangle_E = \frac{1}{N+1}\left[
\frac{\tau}{hM}\Big(\sum_{k=1}^{N-1}(f_{k+1} - f_k)(\overline{g}_{k+1} -
\overline{g}_k) + f_1\overline{g}_1 + f_N\overline{g}_N\Big) +
\sum_{k=1}^N f_{N+k}\overline{g}_{N+k}\right].
\]
The energy is $E(x) = \tfrac{1}{2}\|x\|^2_E$. It is easy to
show that energy is conserved, so that $A$ is skew-Hermitian with respect to
this inner product.

Now we apply the Linear Quadratic Regulator using the cost functional
\[
 J^u(x_0) = \int_0^\infty \|x(t)\|^2_E + \|u(t)\|^2 dt
\]
in order to stabilize the system.

We can do a theoretical study of the system to obtain expressions to
compute our estimates. Notice that our string is a very particular case of a
nonhomogeneous string, whose spectral theory comes back to M.G. Krein,
see \cite[Section 8 of Chapter VI]{Gohb_Kr_Volterr}.
In our case, the eigenvalues of $A$ are
\[
i \lambda_k = -2 i\, \sqrt{\frac{\tau}{hM}}\sin\left(\frac{k\pi}{2(N+1)}\right),
\quad
-N \leq k \leq N, k \neq 0,
\]
and the corresponding orthonormal eigenvectors are $v_k$, where
\[
 v_k = \begin{pmatrix}\frac{1}{i\lambda_k}w_k\\ w_k\end{pmatrix},
\qquad w_k = \left(\sin\left(\frac{kl\pi}{N+1}\right)\right)_{1\leq l
\leq N} \;   .
\]
See the paper \cite{Micu2002} by Micu, where the same matrix $A$ appeared
in the context of a semidiscrete numerical scheme for 1D wave equation.
We also refer to \cite{BelishPo90}, \cite{MitrTarak2003} for a
related inverse problem.

The operator $\sqrt{N+1}\,B_0$ maps the canonical basis
of $\mathbb{C}^m$ onto an orthonormal system of $m$ vectors in
$\mathbb{C}^{2N}$
(we use the inner product $\langle\cdot,\cdot\rangle_E$ in
$\mathbb{C}^{2N}$ and the standard one in $\mathbb{C}^m$). Hence,
$\sqrt{N+1}\,B_0$ is an isometry and it follows that
\[
 \|B\| = \sigma_m(B) = \frac{1}{M\sqrt{N+1}}.
\]
Finally, the vectors $b_k = B^*v_k$ can be computed to obtain
\[
 \|b_k\|^2 = \frac{1}{M^2(N+1)^2}\sum_{l=1}^m \sin^2\left(\frac{j_l k
\pi}{N+1}\right), \quad -N \leq k \leq N, k\neq0.
\]

Using Corollary~\ref{corol-phiks}, we can give an upper bound for
$\gadecay$, assuming that some $\varphi_k < 1$. Theorem~\ref{thm:main} and
Corollary~\ref{corol-phiks} (if it applies) can be used to obtain a lower bound
for $\gadecay$. The following algorithm uses these
bounds to reduce the number of LQR problems being computed.
In the course of its execution,
the upper and the lower theoretical bounds for all configurations are taken into account, but
the LQ optimal regulator is actually computed for a fewer number of configurations.

The algorithm works as follows:

\begin{enumerate}
\item Calculate the eigenvalues $i\lambda_k$ and the corresponding eigenvectors
$v_k$ of $A$.

\item For each control configuration $\mathcal{J} = (j_1,\ldots,j_m)$, compute
the vectors $b_k$ and the quantities
$U_{\mathcal{J}}$ and $L_{\mathcal{J}}$, which are the upper and the lower
theoretical bounds for
$\gadecay$. Set $U_{\mathcal{J}} = +\infty$ if an upper bound is not available.

\item
Select the configuration $\mathcal{J}_0$ having the maximal
$L_{\mathcal{J}}$. Solve the LQR problem numerically for this configuration and
compute $\gadecay$.

\item
Now we proceed to a search, defined recursively as follows.
Let $\gamma$ be the best $\gadecay$ found so far. If
for all configurations $\mathcal{J}$ whose corresponding
$\gadecay$ has not been computed yet,
$U_{\mathcal{J}} < \gamma$, the search stops, and
this current value of $\gamma$ is taken for the optimal $\gadecay$.
If there are configurations $\mathcal{J}$
whose $\gadecay$ has not been computed that have
$U_{\mathcal{J}} \geq \gamma$, the algorithm selects the one
having the greatest $U_{\mathcal{J}}$.
For this configuration, it solves the LQR problem numerically,
computes its $\gadecay$ and updates $\gamma$ according to the rule $\gamma:=\max(\ga,\gadecay)$.
This is the best $\gadecay$ found so far.

\item
The algorithm stops after having exhausted
all possible configurations. It returns the last value of $\gamma$, which is equal to
the maximum of the values of $\gadecay$ over all possible configurations.

\end{enumerate}

Observe that this algorithm also allows one to compute all the configurations having the
optimal $\gadecay$.

\begin{table}
\begin{center}
\begin{tabular}{|l||l|l|l|l|}
\hline
$m$ & $\gadecay$ & Time (s) & LQRs computed  & \% computed \\
\hline
$1$ & \vphantom{\huge l }$6.53\cdot10^{-5}$ & $2$ & $30$ & $100$ \\
$2$ & $3.39\cdot10^{-4}$ & $4$ & $76$ & $17.5$ \\
$3$ & $5.82\cdot10^{-4}$ & $28$ & $441$ & $10.9$ \\
$4$ & $7.70\cdot10^{-4}$ & $123$ & $481$ & $1.76$ \\
$5$ & $8.87\cdot10^{-4}$ & $738$ & $5505$ & $3.86$ \\
$8$ & $1.2\cdot10^{-3}$ & $56675$ & $198369$ & $3.39$ \\
\hline
\end{tabular}
\end{center}
\caption{Results for $N = 30$, $\tau/h = 10$, $M = 50$.}
\label{table:simul2}
\end{table}

The results of the execution of the algorithm are shown on
Table~\ref{table:simul2}.
The computations were done on a
modern desktop computer.
Recall that we have chosen the total number of masses $N=30$.
The table shows that the decay rate $\gadecay$ improves when $m$ increases. The
fourth column collects the number of
LQRs the algorithm had to solve, and the fifth column shows the ratio between
the total of $\binom N m$ possible configurations
and the number of configurations that were actually processed.
One can see that in many cases, our algorithm reduces drastically the amount of computations.

The values $\tau/h = 10$ and $M = 50$ have been chosen for these
computations because they provide a moderate separation of the spectrum of $A$
with respect to $\|B\|$. If we fix $M = 50$ and increase $\tau/h$ (say $\tau/h
= 1000$), then the number of computations is further reduced, since the
separation of the spectrum of $A$ increases and we obtain tighter theoretical
bounds. On the other hand, if one sets $\tau/h$ to a small enough value while
maintaining $M$ fixed, our algorithm will not provide much save in the
computations.

\section{Some open questions}
\label{sec:open-que}

\textbf{Question 1. }
Assume that $m<n$, $R = I$, $Q = I$ and that a skew-Hermitian matrix $A$ is
fixed.
Does it follow that there is a constant $C=C(A)$ such that
$\gadecay \le C$, independently of $B$?
As we already mentioned in Corollary~\ref{gamma-bound}, it is true if $m=1$, with
$C(A)=2\sqrt{2}\, \De$. More generally, part (2) of
Theorem \ref{thm:main} shows that it is also true if, for instance,
$\si_m(B)\ge \frac 12 \|B\|$, or even if we assume that
$\si_m(B)\ge f(\|B\|)$, where $f$ is any function
on $[0, +\infty)$ such that $\lim_{x\to\infty} f(x)=+\infty$.
We conjecture that it is true in general.

\

\textbf{Question 2.} We can pose a somewhat related question concerning
the general pole placement problem for a general complex matrix $A$. Suppose that $m<n$, and let
$\gadecay$ denote the decay rate of the matrix of a stable closed loop system
$A_{\text{cl.loop}} = A - BF$, which is
obtained by (an arbitrary) state space control $u(t)=-Fx(t)$.
Can one assert that the cost matrix
$X_0=\int_0^\infty \exp(A_{\text{cl.loop}}^*t)\exp(A_{\text{cl.loop}}t)\, dt$
is large every time when $\gadecay$ is large?
We conjecture that it is so. Then, it would be interesting to find
an explicit function $G(x)$ (which may depend only on $n, m, A$),
that goes to infinity as $x\to+\infty$
and satisfies $\|X_0\|\ge G(\gadecay)$ for all $B, F$ such that
$A_{\text{cl.loop}}$ is stable. A weaker version of this question is whether
there is such function $G$ that may depend on both $A$ and $B$.

\

\textbf{Question 3.} Corollary~\ref{gamma-bound} can be used to obtain
an upper bound for $\gadecay$. However, if $\varphi_k < 1$ for some $k$, then
Corollary~\ref{corol-phiks} gives a much tighter bound. Can one give a tighter
upper bound even when $\varphi_k \geq 1$ for every $k$?

\section{Conclusions}
\label{conclusions}

\begin{itemize}

\item The bounds $\ellest$, $\ellest^1$ given in Theorem~\ref{thm:main} can be applied only if
all the eigenvalues of $A$ are different.

\item The lower bound $\rho$ given in Theorem~\ref{decay-gener-m} is the one which
can be used in a more general setting (namely $\Delta_m > 0$, which allows
some eigenvalues of $A$ to coincide).
There are cases when it is the best bound available.
It happens, in particular, if some
eigenvalues of $A$ are close together (compared with $\|B\|$).

 \item The two-sided bound given in Corollary~\ref{corol-phiks} holds only when
$\varphi_k < 1$ for all $k$, i.e., when the spectrum of $A$ is separated enough.

\item If all $\varphi_k$ are small, this two-sided bound is very tight and one
can take $d_0 =
\min \|b_k\|$ as a good approximation for $\gadecay$.

 \item When all $\varphi_k$ are small, one can also use
Theorem~\ref{zeros-disks} to locate with precision all the eigenvalues of the
closed-loop system.

\item
Corollary~\ref{gamma-bound} shows that if
$m < n$ and the diameter $\Delta$ of the spectrum of $A$ is much smaller than all
singular values of  $B$, then $\gadecay$ is less than $\sqrt{3}\,\Delta$.

\item One can observe that, as a rule, if the separation of the eigenvalues of
$A$ increases or the number of controls $m$ increases, then $\gadecay$
grows.

\item
If one has to find an optimal $\gadecay$ among a large finite family of LQR control problems,
our estimates permit one to design an algorithm to reduce the search (in some situations,
drastically; see Subsection \ref{mech}).

\item
By now, we only have estimates of $\gadecay$ for the case of a skew-Hermitian
matrix $A$.
It would be very desirable to
give good estimates of $\gadecay$ and $\|X\|$ for non-skew Hermitian matrices, or at least
for the case of matrices $A$ such that $\Re A\ge 0$. Another interesting subclass
are normal matrices $A$, for which some modifications of our methods
could apply. This can also be interesting for the stabilization method
we mentioned in Remark~(\ref{remark-shifted}) after
Theorem~\ref{thm:main-detailed}.

\end{itemize}

\section{Acknowledgements}


The first author has been supported by the JAE-Intro grant of the CSIC (Spanish
National Research Council) and the ICMAT-Intro grant of the Institute for
Mathematical Sciences, Spain.

The second author has been supported by the Projects MTM2008-06621-C02-01,
and MTM2011-28149-C02-1, DGI-FEDER, of
the Ministry of Science and Innovation of Spain,
and by ICMAT Severo Ochoa project SEV-2011-0087
(Spain).


\end{document}